\newtheorem{thm}{Theorem}[section]
\newtheorem{cor}[thm]{Corollary}
\newtheorem{lem}[thm]{Lemma}
\newtheorem{exm}{Example}
\newtheorem{prop}[thm]{Proposition}
\newtheorem{defn}[thm]{Definition}
\newtheorem{rem}[thm]{Remark}
\begin{document}

\begin{center}
{\Large \bf Cotorsion pairs and t-structures in a $2-$Calabi-Yau triangulated category\footnote{Supported by the NSF of China (Grants 11131001)}}

\bigskip

{\large Yu Zhou and
 Bin Zhu}
\bigskip

{\small
\begin{tabular}{cc}
Department of Mathematical Sciences & Department of Mathematical
Sciences
\\
Tsinghua University & Tsinghua University
\\
  100084 Beijing, P. R. China &   100084 Beijing, P. R. China
\\
{\footnotesize E-mail: yu-zhou06@mails.tsinghua.edu.cn} &
{\footnotesize E-mail: bzhu@math.tsinghua.edu.cn}
\end{tabular}
}
\bigskip

\today

\end{center}

\begin{abstract}
For a Calabi-Yau triangulated category $\mathcal{C}$ of Calabi-Yau dimension $d$ with a $d-$cluster tilting subcategory $\mathcal{T}$,
it is proved that the decomposition of $\mathcal{C}$ is determined by the special
decomposition of $\mathcal{T}$, namely, $\mathcal{C}=\oplus_{i\in I}\mathcal{C}_i$, where $\mathcal{C}_i, i\in I$ are triangulated subcategories,
 if and only if $\mathcal{T}=\oplus_{i\in I}\mathcal{T}_i,$ where $\mathcal{T}_i, i\in I$
are subcategories with $\mbox{Hom} _{\mathcal{C}}(\mathcal{T} _i[t],\mathcal{T} _j)=0, \forall 1\leq t\leq d-2$ and $i\not= j.$
 This induces that the Gabriel quivers of endomorphism algebras of any two cluster tilting objects in a $2-$Calabi-Yau triangulated category are connected or not at the same time. As an application, we prove that indecomposable $2-$Calabi-Yau triangulated categories with cluster tilting objects have no non-trivial t-structures and no non-trivial co-t-structures. This allows us to give a classification of cotorsion pairs in this triangulated category. Moreover the hearts of cotorsion pairs in the sense of Nakaoka are equivalent to the module categories over the endomorphism algebras of the cores of the cotorsion pairs.

\end{abstract}

\def\s{\stackrel}
\def\Longrightarrow{{\longrightarrow}}
\def\A{\mathcal{A}}
\def\B{\mathcal{B}}
\def\C{\mathcal{C}}
\def\D{\mathcal{D}}
\def\T{\mathcal{T}}
\def\R{\mathcal{R}}
\def\P{\mathcal{P}}
\def\S{\mathcal{S}}
\def\H{\mathcal{H}}
\def\U{\mathscr{U}}
\def\V{\mathscr{V}}
\def\M{\mathscr{M}}
\def\N{\mathcal{N}}
\def\W{\mathscr{W}}
\def\X{\mathscr{X}}
\def\Y{\mathscr{Y}}
\def\Z{\mathcal {Z}}
\def\I{\mathcal {I}}
\def\add{\mbox{add}}
\def\Aut{\mbox{Aut}}
\def\coker{\mbox{coker}}
\def\deg{\mbox{deg}}
\def\diag{\mbox{diag}}
\def\dim{\mbox{dim}}
\def\End{\mbox{End}}
\def\Ext{\mbox{Ext}}
\def\Hom{\mbox{Hom}}
\def\Gr{\mbox{Gr}}
\def\id{\mbox{id}}
\def\Im{\mbox{Im}}
\def\ind{\mbox{ind}}
\def\mod{\mbox{mod}}
\def\mul{\multiput}
\def\c{\circ}
\def \text{\mbox}

\hyphenation{ap-pro-xi-ma-tion}

\textbf{Key words.} Calabi-Yau triangulated category; $d-$cluster tilting subcategory; Cotorsion pair; t-structure; Mutation of cotorsion pair, Heart.
\medskip

\textbf{Mathematics Subject Classification.} 16E99; 16D90; 18E30

\section{Introduction}

Cotorsion pairs (equivalently, torsion pairs) give a way to construct the whole categories from certain special subcategories. They are important in the study of triangulated categories and abelian categories. We recall the definition here. Let $\X, \Y$ be (additive) subcategories in a triangulated category $\C$ with shift functor $[1]$. The pair $(\X,\Y)$ is called a torsion pair in $\C$ provided the following conditions are satisfied:
\begin{enumerate}
  \item $\Hom(X,Y)=0$ for any $X\in \X$, $Y\in \Y$; and
  \item for any $C\in \C$, there is a triangle $X\rightarrow C\rightarrow Y\rightarrow X[1]$ with $X\in \X, Y\in \Y$.
\end{enumerate}
This notion was introduced by Iyama-Yoshino \cite{IY08}, see also \cite{KR07}, which is the triangulated version of the notion with the same name in abelian categories introduced by Dickson \cite{D66} (see the introduction to \cite{ASS06} for further details). The notion of torsion pairs unifies the notion of t-structures in the sense of \cite{BBD81}, co-t-structures in the sense of Pauksztello \cite{P} and \cite{Bon10}, and the notion of cluster tilting subcategories (objects) in the sense of Keller-Reiten \cite{KR07}, see also \cite{BMRRT06}.

Torsion pairs are important in the study of the algebraic structure and geometric structure of triangulated categories.
 Iyama and Yoshino \cite{IY08} use them to study the mutation of cluster tilting subcategories in
triangulated categories, see also \cite{KR07,BRe07}. Nakaoka \cite{Na11} use them to unify the constructions of abelian categories appearing as quotients of triangulated categories by cluster tilting subcategories \cite{BMR07,KR07,KZ}, and the construction of abelian categories as hearts of t-structures \cite{BBD81}.
There is a relation between t-structures and stability conditions in triangulated categories, see \cite{Bri} for details. As one of important special cases, cluster tilting objects (or subcategories)
appeared naturally in the study on the categorification of cluster
algebras \cite{BMRRT06}. They have many nice algebraic properties and
combinatorial properties which have been used in the
categorification of cluster algebras (see the surveys \cite{K12,Re10} and the references therein). In this categorification, the
cluster tilting objects in the cluster category of an acyclic quiver
(or more general a quiver with potential) corresponds to the
clusters of the corresponding cluster algebra.

Cluster tilting subcategories in triangulated categories are the
torsion classes of some special torsion pairs. A triangulated
category (even a $2-$Calabi-Yau triangulated category) may not admit
any cluster tilting subcategories \cite{KZ,BIKR}. In contrast, they always
admit torsion pairs, for example, the trivial torsion pair: (the
whole category, the zero category). In a triangulated
category $\C$ with shift functor $[1]$, when $(\X,\Y)$ is a torsion pair, we call the pair $(\X,\Y[-1])$ is a cotorsion pair, and call the subcategory $\X\bigcap \Y[-1]$ the core of this cotorsion pair.  It follows that $(\X,\Y)$ is a cotorsion pair in $\C$ if and only if $(\X, \Y[1])$ is a torsion pair.

  Recently there are several works on the classification of torsion pairs (or equivalently, cotorsion pairs) of a $2-$Calabi-Yau triangulated category.  Ng gives a
classification of torsion pairs in the cluster categories of
$A_{\infty}$ \cite{Ng10} by defining Ptolemy diagrams of an $\infty-$gon
$P_{\infty}$. Holm-J{\o}rgensen-Rubery \cite{HJR1} gives a classification of cotorsion pairs in cluster category $\C_{A_n}$ of type $A_n$ via Ptolemy diagrams of a regular $(n+3)-$gon $P_{n+3}$.
They also do the same thing for cluster tubes \cite{HJR2}. In \cite{ZZ2}, we define the mutation of
torsion pairs to produce new torsion pairs by generalizing the mutation of cluster tilting subcategories \cite{IY08},
and show that the mutation of torsion pairs has the geometric meaning when the categories have geometric models.
In \cite{ZZZ}, together with zhang, we give the classification of (co)torsion pairs in the (generalized) cluster categories associated with marked Riemann surfaces without punctures. For classification of torsion pairs in  an abelian category, we refer to the recent work of Baur-Buan-Marsh \cite{BBM11}.

In this paper, we show
that an indecomposable $2-$Calabi-Yau triangulated category $\C$ with a cluster tilting
object has only trivial t-structures, i.e. $(\C,0)$, or $(0,\C)$.
For this, we prove the fact that the decomposition of $\C$ is determined by the decomposition of the cluster tilting subcategory.
This decomposition result holds for arbitrary $d-$Calabi-Yau triangulated categories, where $d>1$ is an integer.
As an application of the result on t-structures, we give a classification of cotorsion pairs in $\C$ and determine the hearts of cotorsion pairs in the sense of Nakaoka \cite{Na11}   , which are equivalent to the module categories of their cores. We also discuss the relation between mutation of cotorsion pairs \cite{ZZ2} with mutation of cluster tilting objects.

This paper is organized as follows: In Section 2, some basic
  definitions and results on cotorsion pairs are recalled.
 In Section 3, the definition of decomposition of triangulated categories is recalled.
The decomposition of $d-$cluster tilting categories is defined, which is not only the
decomposition of additive categories, but also with some additional vanish condition on negative extension groups (appeared first in Section 4.2, in \cite{KR08}; and for $d=2$, this condition is empty).
 An example is given to explain in general the decomposition of triangulated categories is not determined by
that of cluster tilting subcategories. It is proved that for any $d-$Calabi-Yau triangulated category, its decomposition is determined by the decomposition of a $d-$cluster tilting subcategory.
   In Section 4, the first main result is that the indecomposable $2-$Calabi-Yau triangulated categories with cluster tilting objects have no non-trivial
   t-structures (Theorem 4.1).  This allows us to give a classification of cotorsion
   pairs in these categories (Theorem 4.4), which is the second main result in this section.
In Section 5, we discuss the relation between mutation of cotorsion pairs and mutation of cluster tilting objects.
For any cotorsion pair $(\X, \Y)$ with core $I$, any basic cluster tilting object $T$ containing $I$ as a direct summand can be written uniquely as $T=T_{\X}\oplus I\oplus T_{\Y}$
such that $T_{\X}\oplus I$
(or $T_{\Y}\oplus I$) is cluster tilting in $\X$ ($\Y$ respectively), which we shall define in this section, and any  triple $(M, I, N)$ of objects $M, I, N$ in $\C$ with the property above
gives a cluster tilting object $M\oplus I\oplus N$ containing $I$ as a direct summand in $\C$. The mutation of such
$T$ in the indecomposable object $T_0$ can be made inside $T_{\X}\oplus I$ or $T_{\Y}\oplus I$, depending on that $T_0$ is a direct summand of $T_{\X}$ or $T_{\Y}$ respectively,
if $T_0$ is not the direct summand of $I$. If $T_0$ is the
direct summand of $I$, then the mutation $T'$ of $T$ in $T_0$ is the cluster tilting object which can be written as $T'=T'_{\X'}\oplus I'\oplus T'_{\Y'}$,
where $(\X',\Y')$ is the mutation of $(\X,\Y)$ and $I'$ is the core
 of $(\X',\Y')$. In the final section, we prove that for any cotorsion pair $(\X,\Y)$ with core $I$ in a $2-$Calabi-Yau triangulated category with a cluster tilting object, the heart $\underline{H}$ of $(\X, \Y)$ is equivalent to mod $I$, where $\underline{H}$ is the subcategory of $\C/I$ which is the image of the subcategory $(\X[-1]*I)\bigcap (I*\Y[1])$ under the natural projection. $\underline{H}$ is called the heart of the cotorsion pair $(\X,\Y)$ \cite{Na11}.

\medskip

\section{Preliminaries}

Throughout this paper, $k$ denotes a field. When we say that $\C$ is a triangulated category, we always assume that $\C$ is a Hom-finite Krull-Schmidt $k-$linear triangulated category over a fixed field $k$.
Denote by [1] the shift functor in $\C$, and by [-1] the inverse of [1]. For a subcategory $\D$, we mean $\D$ is a full subcategory of $\C$ which is closed under
isomorphisms, finite direct sums and direct summands. In this sense, $\D$ is determined by the set of indecomposable objects in it. By $X\in \C$, we mean that $X$ is an object
of $\C$. We denote by add$X$ the additive closure generated by object $X$, which is a subcategory of $\C$.
 Sometimes, we identify an object with the set of indecomposable objects
appearing in its direct sum decomposition, and with the subcategory add$X$. Moreover, if a subcategory $\D$ is closed under [1], [-1] and extensions, then
$\D$ is a triangulated subcategory of $\C$ (in fact it is a thick subcategory).
We call that a triangulated category $\C$ has Serre functor provided there is an equivalent functor $S$ such that $\Hom_{\C}(X, Y)\cong D\Hom _{\C}(Y, SX)$,
which are functorially in both variables, where $D=\Hom_k(-,k)$. If the Serre functor is $[d]$, an integer, $\C$ is called a $d-$Calabi-Yau ($d-$CY, for short) triangulated category. We always use $\Hom(X, Y)$ to denote Hom-space of objects $X, Y$ in $\C$.
We denote by $\Ext^n(X,Y)$ the space
$\Hom(X,Y[n])$.

For a subcategory $\X$ of $\C$, denoted by $\X\subset\C$, let
$$\X ^{\bot}=\{Y\in\C\mid\Hom(X,Y)=0 \mbox{~for any~} X\in \X\}$$
and
$$^{\bot}\X=\{Y\in\C\mid\Hom(Y,X)=0 \mbox{~for any~} X\in \X\}.$$

For two subcategories $\X,
\Y$, by $\Hom(\X, \Y)=0$, we mean that $\Hom(X,Y)=0$
for any $X\in \X$ and any $Y\in \Y$.  A subcategory $\X$ of $\C$ is said to be a rigid subcategory if $\Ext^1(\X,\X)=0$. Let
$$\X*\Y=\{Z\in\C\mid \exists ~~\text{ a triangle }  X\rightarrow Z\rightarrow Y\rightarrow X[1] \mbox{~in $\C$~with~} X\in \X, Y\in \Y\}.$$
It is  easy to see that $\X*\Y$ is
closed under taking isomorphisms and finite direct sums. A subcategory $\X$ is said to be closed under extensions (or an extension-closed
 subcategory) if $\X*\X\subset \X$. Note that $\X*\Y$ is closed under taking direct
summands if $\Hom(\X,\Y)=0$ (Proposition 2.1(1) in \cite{IY08}). Therefore, $\X*\Y$ can be understood as a subcategory of $\C$ in this case.

We recall the definition of cotorsion pairs in a triangulated category $\C$ from \cite{IY08,Na11}.

\begin{defn} Let $\X$ and $\Y$ be subcategories of a triangulated category $\C$.
\begin{itemize}
\item[$1.$] The pair $(\X,\Y)$ is a cotorsion pair if
$$\Ext^1(\X,\Y)=0\text{ and }\C=\X*\Y[1]\text{.}$$
Moreover, we call the subcategory $\I=\X\bigcap \Y $ the core of the cotorsion pair $(\X,\Y)$.
\item[$2.$] A t-structure $(\X, \Y )$ in $\C $ is a cotorsion pair such that $\X$ is closed under $[1]$ (equivalently
$\Y $ is closed under $[-1]$). In this case $\X\bigcap \Y[2]$ is an
abelian category, which is called the heart of $(\X,\Y)$ \cite{BBD81,BRe07}.

\item[$3.$] A co-t-structure $(\X, \Y )$ in $\C $ is a cotorsion pair such that $\X$ is closed under $[-1]$ (equivalently
$\Y $ is closed under $[1]$) \cite{Bon10,P}.

\item[$4.$] The subcategory  $\X$ is said to be a cluster tilting
 subcategory if $(\X, \X)$ is a cotorsion pair \cite{KR07,KZ,IY08}. We say that an object $T$ is a cluster tilting object
 if $\add T$ is a cluster tilting subcategory.
\end{itemize}
\end{defn}

 \begin{rem} A pair
$(\X,\Y)$ of subcategories of $\C$ is called
 a torsion pair if $\Hom(\X,\Y)=0$ and $\C=\X*\Y$. In this case,
 $\I=\X \bigcap\Y[-1]$ is called the core of the torsion pair. Moreover,
 a pair $(\X,\Y)$ is a cotorsion pair if and only if $(\X, \Y [1])$
is a torsion pair. In any case, the core $\I$ is a rigid subcategory of $\C$.
\end{rem}

 \begin{rem} $(\C,0)$ and $(0,\C)$ are t-structures in $\C$, which are called trivial t-structures. They are also co-t-structures  and  are called trivial co-t-structures in $\C$.
\end{rem}

\begin{lem} \cite{ZZ1} Let $(\X,\Y)$ be a cotorsion pair in $\C$ with core $\I$. Then
\begin{itemize}
\item[$1.$] $(\X,\Y)$ is a t-structure if and only if $\I=0$

\item[$2.$] $\X$ is a rigid subcategory if and only if $\X=\I$

\item[$3.$] $\X$ is a cluster tilting subcategory if and only if $\X=\I=\Y$.
\end{itemize}
\end{lem}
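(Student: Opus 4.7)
The plan is to first extract two perpendicular characterisations of $\X$ and $\Y$ directly from the cotorsion pair axioms, and then reduce each of the three parts to a short manipulation based on them.

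First I would establish the equalities
$$\X = {}^{\bot}(\Y[1])\qquad\text{and}\qquad \X^{\bot} = \Y[1].$$
The forward inclusions are simply $\Ext^{1}(\X,\Y)=0$. For the reverse of the first, given $C$ with $\Hom(C,\Y[1])=0$, applying $\Hom(C,-)$ to a cotorsion-pair triangle $X_C\to C\to Y_C[1]\to X_C[1]$ kills the map $C\to Y_C[1]$, so the triangle splits and exhibits $C$ as a summand of $X_C\in\X$. The second equality is symmetric: if $\Hom(\X,C)=0$, applying $\Hom(X_C,-)$ to the same triangle forces $X_C=0$, hence $C\cong Y_C[1]\in\Y[1]$.

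From these, part $(2)$ reads off at once: $\Ext^{1}(\X,\X)=0$ iff $\X[1]\subset\X^{\bot}=\Y[1]$ iff $\X\subset\Y$ iff $\X=\X\cap\Y=\I$. Part $(3)$ is a uniqueness argument: if $\X$ is cluster tilting, then $(\X,\X)$ is a cotorsion pair alongside $(\X,\Y)$, and both give $\Y=\X^{\bot}[-1]=\X$; conversely $\X=\I=\Y$ means $(\X,\Y)=(\X,\X)$, which is the definition.

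For part $(1)$ the ``only if'' direction is a single Ext computation: if $\X$ is closed under $[1]$ and $M\in\I$, then $M[1]\in\X$ and $M\in\Y$, so $\Hom(M,M)=\Ext^{1}(M[1],M)\subset\Ext^{1}(\X,\Y)=0$, hence $\id_M=0$ and $M=0$. The ``if'' direction is the main technical step. Given $X\in\X$, use $\C=\X*\Y[1]$ to fix a triangle $X'\to X[1]\to Y'[1]\to X'[1]$ with $X'\in\X$, $Y'\in\Y$; the aim is $Y'\in\I$. Since already $Y'\in\Y$, it remains to place $Y'$ in $\X={}^{\bot}(\Y[1])$. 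For arbitrary $Y\in\Y$, applying $\Hom(-,Y[1])$ to the rotated triangle $Y'\to X'\to X[1]\to Y'[1]$ puts $\Hom(Y',Y[1])$ in the long exact sequence between $\Hom(X',Y[1])=\Ext^{1}(X',Y)=0$ and $\Hom(X,Y[1])=\Ext^{1}(X,Y)=0$, so it vanishes. Thus $Y'\in\I=0$ and $X[1]\cong X'\in\X$. The main obstacle is this final long-exact-sequence chase, but once the perpendicular characterisation $\X={}^{\bot}(\Y[1])$ is available it is a two-line Ext-vanishing using nothing beyond the cotorsion pair axioms.
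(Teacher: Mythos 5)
The paper does not actually prove this lemma --- it is quoted from \cite{ZZ1} --- so there is no in-text argument to compare against; your proof is correct and follows the standard route, deriving the orthogonality identities $\X={}^{\bot}(\Y[1])$ and $\X^{\bot}=\Y[1]$ for the associated torsion pair (which the paper itself records in Section 2) and reducing each part to short Ext-vanishing arguments, with the rotated-triangle chase correctly handling the one nontrivial direction of part (1). The only imprecision is in your derivation of $\X^{\bot}\subset\Y[1]$: the vanishing of the map $X_C\to C$ forces the triangle to split as $Y_C[1]\cong C\oplus X_C[1]$, so $C$ lies in $\Y[1]$ because subcategories are closed under direct summands, not because $X_C$ itself must vanish; the conclusion is unaffected.
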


\medskip
Recall that a subcategory $\X$ is said to be contravariantly finite in $\C$,
 if any object $M\in \C$ admits a right
$\X-$approximation $f:X\rightarrow M$, which means that any map from
$X'\in \X$ to $M$ factors through $f$. The left $\X-$approximation
of $M$ and covariantly finiteness of $\X$ can be defined dually.
$\X$ is called functorially finite in $\C$ if $\X$ is both
covariantly finite and contravariantly finite in $\C$.
Note that if $(\X,\Y)$ is a torsion pair, then $\X={}^{\bot}\Y$, $\Y=\X^\bot$, and
it follows that $\X$ (or $\Y$) is a contravariantly (covariantly, respectively) finite and extension-closed subcategory of
$\C$.

\medskip

Let $(\X,\Y)$ be a cotorsion pair with core $\I$ in a triangulated category $\C$. Denote by $H$ the subcategory $(\X*\I[1])\bigcap (\I*\Y[1])$. The image of $H$ under the natural projection $\C\rightarrow \C/\I$, which denoted by $\underline{H}$, is called the heart of the cotorsion pair $(\X,\Y)$. It is proved by Nakaoka that the heart $\underline{H}$ is an abelian category, see \cite{Na11} for more detailed construction.

\section{Decompositions of Calabi-Yau triangulated categories}

In this section, we discuss how the decomposition of triangulated categories is determined by that of a cluster tilting subcategory.
 We recall the definition of $d-$cluster tilting subcategories from \cite{KR07,IY08} in the following:

\begin{defn} Let $\C$ be a triangulated category, $d> 1$, an integer. A subcategory $\T$ of $\C$ is called $d-$rigid provided $\Ext^i(\T,\T)=0$ for all $1\le i\le d-1$.

A $d-$ rigid subcategory $\T$ is called $d-$cluster tilting provided that $\T$ is functorially finite, and satisfies the property: $T\in \T$ if and only if $\Ext^i(\T, T)=0$ for all $1\le i\le d-1$ if and only if
$\Ext^i(T, \T)=0$ for all $1\le i\le d-1;$

An object $T$ is called a $d-$cluster tilting (respectively $d-$rigid) object if $addT$ is $d-$cluster tilting (respectively $d-$rigid).
\end{defn}

The main examples of $d-$cluster tilting subcategories are $d-$cluster tilting subcategories in $d-$cluster categories (see \cite{IY08,T,Zhu}). Other examples can be found in \cite{K08,BIKR}. Note that when $d=2$, the $d-$cluster tilting subcategories (or $d-$cluster tilting objects) are called cluster tilting subcategories (cluster tilting objects respectively).

\begin{defn}\label{decompofC} Let $\C$ be a triangulated category, and $\C_i, i\in I$ be triangulated subcategories of $\C$.
We call that $\C$ is a direct sum of triangulated subcategories $\C_i, i\in I,$ provided that
\begin{itemize}
\item[$1.$] Any object $M\in \C$ is a direct sum of finitely many objects $M_i\in \C_i$;
\item[$2.$] $\Hom(\C_i,\C_j)=0,\forall i\neq j$.
\end{itemize}
In this case, we write $C=\oplus_{i\in I}\C_i$. We say $\C$ is indecomposable if $\C$ cannot be written as a direct sum of two nonzero triangulated subcategories.
\end{defn}

\begin{defn}\label{decompofT} Let $\T$ be a $d-$cluster tilting subcategory of a triangulated category $\C$, and $\T_i, i\in I,$  be subcategories of $\T$.
We call that $\T$ is a direct sum of subcategories $\T_i, i\in I,$ provided that
\begin{itemize}
\item[$1.$] Any object $T\in \T$ is a direct sum of finitely many objects $T_i\in \T_i$;
\item[$2.$] $\Hom(\T_i,\T_j)=0, \forall i\neq j$;
\item[$3.$] $\Hom(\T_i[k],\T_j)=0,\forall i\neq j, 1\leq k\leq d-2$;
\end{itemize}
In this case, we write $\T=\oplus_{i\in I}\T_i$. We say $\T$ is indecomposable if $\T$ cannot be written as a direct sum of two nonzero subcategories.
\end{defn}

\begin{rem} The third condition in Definition \ref{decompofT} appeared first in \cite{KR07} for the study of Gorenstein property of $d-$cluster tilting subcategories (see the subsection 4.6 there for details), and it will play an essential rule in our result.
 When $d=2$, this condition is empty. \end{rem}

The following example shows that there are indecomposable $d-$CY triangulated categories admitting $d-$cluster
tilting subcategories, those cluster tilting subcategories can be decomposed as sum of subcategories satisfying the conditions $1, 2$, but $3$ in Definition 3.3.

\begin{exm} Let $Q:3\rightarrow 2\rightarrow 1$ be the quiver of type $A_3$ with linear orientation, and $\C$ be the $4-$cluster category of $Q$, i.e. $\C=D^b(kQ)/\tau^{-1}[3]$ (compare \cite{K08}).
 Let $P_1, P_2, P_3$ be the indecomposable projective modules associated to the vertices of $Q$, and $S_1, S_2, S_3$ the corresponding simple modules. Then $T=P_1\oplus P_2\oplus P_3$ is
a $4-$cluster tilting object, $P_1\oplus P_3$ is an almost complete
$4-$cluster tilting object, it has $4$ complements (compare \cite{Zhu,T}), one is $P_2$, the others are $S_3, S_3[1],$ and $S_3[2]$. Denote by
$\T =add (P_1\oplus P_3\oplus S_3[1])$, which is a $4-$cluster tilting subcategory of $\C$. Set $\T_1=add(P_1\oplus P_3)$, $\T_2=add S_3[1]$. Both are subcategories of $\T$.
 It is easy to see that $\T, \T_1, \T_2$ satisfy the first two conditions of Definition 3.3, but not satisfy the third one, an easy computation shows $Hom(P_3[1], S_3[1])\not=0$.

We note that this $4-$cluster category $\C$ is indecomposable.

\end{exm}

We will discuss the relation between the decomposition of triangulated categories and the decomposition of $d-$cluster tilting subcategories. Firstly we look at two examples:

\begin{exm} Let $Q$ be a connected quiver without oriented cycles,
$\C=D^b(kQ)$ the bounded derived category of $kQ$. It is an indecomposable triangulated category. We know $\T=add\{ \tau^n[-n] kQ\  |\  n\in \textbf{Z}\ \}$ is a cluster
tilting subcategory containing infinitely many indecomposable objects in $\C$. Let $\T_i=add\{\tau ^i[-i]kQ\}$ for $i\in \textbf{Z}.$ It is easy to check that $\T=\oplus _{i\in \textbf{Z}}\T_i$.

\end{exm}

\begin{exm} Let $Q$ be a connected quiver without oriented cycles, $F=\tau^{-1}[1]$ an automorphism of the derived category $D^b(kQ)$. The repetitive cluster category of $Q$
 is defined for any positive integer $m$, namely, the orbit triangulated category $\C=D^b(kQ)/(F^m)$ \cite{K08}. It is an indecomposable triangulated category. Let $m=2$.
Then $kQ\oplus F(kQ)$ is a cluster tilting object in  $\C$. Let $\T=add(kQ\oplus F(kQ))$, $\T_1=add(kQ)$, $\T_2=add(F(kQ))$. Then $\T$ is a cluster tilting subcategory and $\T=\T_1\oplus \T_2$.

\end{exm}
The two examples above show that in general the indecomposable triangulated category may admit a decomposable $d-$cluster tilting subcategory.
 In the following, we will prove that the
decomposition of $d-$CY triangulated categories is determined by the decomposition of a $d-$cluster tilting subcategory. Recall that a $k-$linear triangulated category $\C$ is $d-$CY if $[d]$ is the Serre functor.

\begin{prop}\label{prop1} Let $\C$ be a $d-$CY triangulated category with a $d-$cluster tilting subcategory $\T$.
Suppose that $\T=\oplus_{i\in I}\T_i$ with $\T_i,i\in I,$ nonzero subcategories,
and let $\C_i=\T_i\ast\T_i[1]\ast\cdots\ast\T_i[d-1]$ for any $i\in I$. Then $\C_i$ is a triangulated subcategory of $\C$ and $\C=\oplus_{i\in I}\C_i$.
\end{prop}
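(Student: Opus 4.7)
The plan is to prove $\C = \oplus_{i \in I} \C_i$ in three main stages: (i) extract sufficient Hom vanishings between the $\T_i$ at the shifts that will arise; (ii) decompose every object of $\C$ via the standard filtration of a $d$-cluster tilting subcategory; (iii) use Calabi-Yau duality to promote the shift-closure $\T_i[d] \subseteq \C_i$ and from it deduce both $\Hom(\C_i, \C_j) = 0$ and the triangulated structure of each $\C_i$.

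Stage (i) combines Condition 2 ($\Hom(\T_i, \T_j) = 0$) and Condition 3 ($\Hom(\T_i[k], \T_j) = 0$ for $1 \le k \le d-2$) of Definition \ref{decompofT} with the $d$-cluster tilting of $\T$ (giving $\Ext^\ell(\T, \T) = 0$ for $1 \le \ell \le d-1$) to obtain $\Hom(\T_i, \T_j[m]) = 0$ for all $m \in [-(d-2), d-1]$ when $i \ne j$, extended to $m = d$ by one use of Calabi-Yau duality. Stage (ii) inducts on the filtration length $k$ using $\C = \T * \T[1] * \cdots * \T[d-1]$. Given $M$ via a triangle $M' \to M \to T_k[k] \to M'[1]$ with $T_k \in \T$, the inductive hypothesis decomposes $M' = \oplus M'_i$ and additivity gives $T_k = \oplus T_{k,i}$; the cross-components of the connecting map $T_k[k] \to M'[1]$ factor through $\Hom(\T_i, \T_j[m])$ with $m \in [-(k-1), 0] \subseteq [-(d-2), 0]$, all zero by stage (i). Hence the connecting map is block-diagonal, and the cone splits as $M = \oplus M_i$ with $M_i \in \T_i * \T_i[1] * \cdots * \T_i[k]$.

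Stage (iii) is the crux. For $T \in \T_i$ and any $X \in \C_j$ with $j \ne i$, the filtration of $X$ presents $\Hom(T, X)$ as an iterated extension of pieces $\Ext^\ell(\T_i, \T_j)$ for $\ell \in [0, d-1]$, all zero by stage (i); Calabi-Yau duality then gives $\Hom(X, T[d]) \cong D\Hom(T, X) = 0$ for every $X \in \C_j$. Decomposing $T[d] = \oplus_k (T[d])_k$ by stage (ii), the split inclusion $(T[d])_j \hookrightarrow T[d]$ is a morphism in $\Hom((T[d])_j, T[d]) = 0$ for $j \ne i$, forcing $(T[d])_j = 0$ and thus $\T_i[d] \subseteq \C_i$. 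A symmetric computation using the split projection $T[-1] \twoheadrightarrow (T[-1])_j$ and the vanishing $\Hom(T[-1], \C_j) = \Hom(T, \C_j[1]) = 0$ yields $\T_i[-1] \subseteq \C_i$. Iterating these containments places every shift $\T_i[k]$ inside $\C_i$, after which the pairwise Hom vanishing $\Hom(\C_i, \C_j) = 0$ and the closure of $\C_i$ under shifts and extensions follow by applying the decomposition of stage (ii) together with standard direct-summand arguments in the expanded vanishing range.

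The main obstacle is stage (iii). A direct Hom-by-pieces computation for $\Hom(\C_i, \C_j) = 0$ breaks at the single case $\Hom(\T_i[d-1], \T_j)$, which Definition \ref{decompofT} does not control. Calabi-Yau duality resolves this by converting the troublesome Hom into $\Hom(T, X)$ for $T \in \T_i$ and arbitrary $X \in \C_j$, a form that the filtration of the codomain does handle; the direct-summand argument on the resulting decomposition of $T[d]$ then secures the shift-closure that powers all subsequent conclusions.
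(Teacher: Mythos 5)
Your proposal is correct in substance and shares the paper's two central devices: the componentwise decomposition of every object obtained by splitting the approximation tower of the $d$-cluster tilting subcategory (the paper's Lemma \ref{lem-decomp}), and the use of Serre duality together with that decomposition to force $\T_i[d]\subseteq\C_i$ (the paper's Lemma \ref{lemtrisubcat}). Where you genuinely diverge is in how the remaining properties are extracted. The paper proves the orthogonality characterization $\C_i=\bigcap_{j\neq i}\bigcap_{k=1}^{2d-2}{}^{\bot}\T_j[k]$ (Lemma \ref{lemofcap}), whose reverse inclusion is a separate induction on the approximation triangles; this description makes extension-closure of $\C_i$ automatic, after which shift-closure and finally $\Hom(\C_i,\C_j)=0$ follow. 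You dispense with that lemma entirely and run everything through the global decomposition plus split-mono/split-epi arguments, which is leaner but forces a stricter order of deductions: you must obtain $\Hom(\C_i,\C_j)=0$ \emph{before} extension-closure (your direct-summand argument for extensions consumes the Hom-vanishing), so the troublesome group $\Hom(\T_i[d-1],\T_j)$ has to be killed using only the containment $\T_i[d]\subseteq\C_i$. This does work, but you should spell it out: $T[d]\in\T_i\ast\T_i[1]\ast\cdots\ast\T_i[d-1]$ gives $T[d-1]\in\T_i[-1]\ast\cdots\ast\T_i[d-2]$, and $\Hom$ from these pieces into $\T_j$ lands in the window $[-(d-2),1]$ already covered by your stage (i). By contrast, the claim that ``iterating these containments places every shift $\T_i[k]$ inside $\C_i$'' should be dropped: for instance $\T_i[-2]\subseteq\C_i$ is not reachable by a second application of the same one-step argument (it would require $\Hom(\T_i,\T_j[d+1])=0$, which the hypotheses do not directly provide when $d=2$); the further shifts come for free only after extension-closure is in hand, since then $\C_i[\pm1]\subseteq\C_i$ and hence $\C_i[1]=\C_i$.
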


Note that by Proposition 2.1 \cite{IY08}, $\C_i,i\in I,$ are closed under direct summands, so they are subcategories of $\C$.

We divide our proof into several steps:

\begin{lem}\label{lem-decomp}
Under the same assumption as in Proposition \ref{prop1}, every object $X$ in $\C$ has
a decomposition $X=\oplus_{i\in I}X_i$ with finite many nonzero $X_i\in\C_i$, $ i\in I$.
In particular, every indecomposable object of $\C$ lies in some $\C_i, i\in I$.
\end{lem}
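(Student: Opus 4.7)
The plan is to use the standard fact from Iyama--Yoshino \cite{IY08} that for a $d$-cluster tilting subcategory $\T$ every object of $\C$ lies in $\T*\T[1]*\cdots*\T[d-1]$, and then to decompose such an object by induction on filtration length. More precisely, I would prove by induction on $n\in\{0,1,\ldots,d-1\}$ the statement: every $X\in \T*\T[1]*\cdots*\T[n]$ decomposes as a finite direct sum $X=\oplus_{i\in I} X_i$ with $X_i\in \T_i*\T_i[1]*\cdots*\T_i[n]$ and only finitely many $X_i$ nonzero. Taking $n=d-1$ gives the main decomposition; the second assertion (indecomposable objects lie in some $\C_i$) then follows from Krull--Schmidt, since only one summand can be nonzero when $X$ is indecomposable.

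For the base case $n=0$, an object of $\T$ decomposes according to $\T=\oplus_i\T_i$ directly from hypothesis (1) of Definition \ref{decompofT} together with Krull--Schmidt. For the inductive step, given $X\in \T*\T[1]*\cdots*\T[n]$, I choose a triangle $Y\to X\to T[n]\to Y[1]$ with $Y\in\T*\cdots*\T[n-1]$ and $T\in\T$. The inductive hypothesis applied to $Y$ and the base case applied to $T$ yield $Y=\oplus_i Y_i$ with $Y_i\in\T_i*\cdots*\T_i[n-1]$ and $T=\oplus_i T_i$ with $T_i\in\T_i$. The connecting morphism $T[n]\to Y[1]$ is then a matrix with entries $T_j[n]\to Y_i[1]$; filtering $Y_i[1]$ by its factors in $\T_i[k]$ for $1\le k\le n$, each off-diagonal ($i\neq j$) entry is assembled from groups of the form $\Hom(\T_j[n],\T_i[k])=\Hom(\T_j[n-k],\T_i)$.

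The heart of the argument is that these off-diagonal groups vanish: for $i\neq j$ and $1\le k\le n\le d-1$ one has $n-k\in\{0,1,\ldots,n-1\}\subseteq\{0,1,\ldots,d-2\}$. By condition (2) of Definition \ref{decompofT} the case $n-k=0$ vanishes, and by condition (3) the cases $1\le n-k\le d-2$ vanish. Hence the connecting morphism is block-diagonal; passing to the cone splits the triangle as a direct sum of triangles $Y_i\to X_i\to T_i[n]\to Y_i[1]$, which gives $X\cong\oplus_i X_i$ with $X_i\in\T_i*\T_i[1]*\cdots*\T_i[n]$. The finite-support property is preserved since $\C$ is Hom-finite and Krull--Schmidt, so only finitely many of the $Y_i, T_i$ are nonzero at each step.

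The main subtlety I anticipate is aligning the range of off-diagonal Hom shifts precisely with what conditions (2) and (3) provide: the inequality $n\le d-1$ is exactly what guarantees $n-k\le d-2$, which is why condition (3) is imposed in that specific range. Note that the Calabi--Yau hypothesis is not actually needed for this particular lemma; it will instead be essential in the subsequent steps of Proposition \ref{prop1} that establish the full orthogonality $\Hom(\C_i,\C_j)=0$, where Serre duality will be required to kill the extreme shift $[d-1]$.
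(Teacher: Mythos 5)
Your proof is correct and follows essentially the same route as the paper's: both arguments run an induction along the Iyama--Yoshino tower $\C=\T*\T[1]*\cdots*\T[d-1]$ and use conditions (2) and (3) of Definition \ref{decompofT} to show that the relevant morphism in each triangle is block-diagonal over $I$, so the triangle (and hence $X$) splits into a finite direct sum with summands in the $\T_i*\T_i[1]*\cdots*\T_i[n]$. The only cosmetic difference is that you split the connecting morphism $T[n]\to Y[1]$ by upward induction on filtration length, whereas the paper splits the approximation maps $f^{(n)}:X^{(n)}\to\oplus_{i}B_i^{(n-1)}$ by downward induction; your observation that the Calabi--Yau hypothesis is not needed for this lemma is likewise consistent with the paper's proof.
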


\begin{proof}

Since $\T=\oplus_{i\in I}\T_i$ is a $d-$cluster tilting subcategory, by Corollary 3.3 in \cite{IY08},
for each indecomposable object $X$ in $\C$, there are $d$ triangles:
$$X^{(n)}\s{f^{(n)}}\rightarrow \oplus_{i\in J}B_i^{(n-1)}\rightarrow X^{(n-1)}\rightarrow X^{(n)}[1], \text{ } n=1,\cdots,d,$$
where $J$ is a finite subset of $I$, $B_i^{(n-1)}\in\T_i$ $X^{(0)}=X$ and $X^{(d)}=0$.
Then $X^{(d-1)}\cong \oplus_{i\in J}B_i^{(d-1)}$.  We want to prove that $X\cong\oplus_{i\in J}X_i$ with $X_i\in\C_i$.

Assume that $X^{(n)}\cong \oplus_{i\in J}X_i^{(n)}$ with $X_i^{(n)}\in\T_i\ast\T_i[1]\ast\cdots\ast\T_i[d-1-n]$ for some $1\leq n\leq d-1$.
By Definition \ref{decompofT}, $\Hom(\T_i[k],\T_j)=0$ for $i\neq j$, $0\leq k\leq d-1-n\leq d-2$, then $\Hom(X_i^{(n)},\T_j)=0$ for $j\neq i$.
So $f^{(n)}$ is a diagonal map, say
$\begin{pmatrix}f_1&0&0\\0&\ddots&0\\0&0&f_{|J|}\end{pmatrix}$, where $f_i:X_i^{(n)}\rightarrow B_i^{(n-1)}$.
Extend each $f_i$ to triangle:
$$X_i^{(n)}\s{f_i}\rightarrow B_i^{(n-1)}\rightarrow X_i^{(n-1)}\rightarrow X_i^{(n)}[1].$$
Then we have that $X^{(n-1)}\cong \oplus_{i\in J} X_i^{(n-1)}$ and $X_i^{(n-1)}\in\T_i\ast\T_i[1]\ast\cdots\ast\T_i[d-n]$, $i\in J$.
By induction on $n$ (from $d-1$ to $0$), $X=X^{(0)}\cong\oplus_{i\in J}X_i^{(0)}$ with $X_i^{(0)}\in\C_i$.

\end{proof}

\begin{lem}\label{lemofcap}

Under the same assumption as in Proposition \ref{prop1}, $\C_i=\bigcap\limits_{j\neq i}\bigcap\limits_{k=1}^{2d-2}{}^\bot\T_j[k]$
holds for any $i\in I$.
\end{lem}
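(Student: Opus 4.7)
The plan is to show the two inclusions separately.

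For $\C_i \subseteq \bigcap_{j\neq i}\bigcap_{k=1}^{2d-2}{}^\bot\T_j[k]$: given $X \in \C_i = \T_i*\T_i[1]*\cdots*\T_i[d-1]$, iterated long exact sequences applied to $\Hom(-, \T_j[k])$ reduce $\Hom(X, \T_j[k]) = 0$ (for $j \ne i$ and $k \in [1, 2d-2]$) to $\Hom(\T_i, \T_j[m]) = 0$ for $m = k - \ell$ with $\ell \in [0, d-1]$, i.e., $m \in [2-d, 2d-2]$. I would split into three cases: rigidity of $\T = \oplus_i \T_i$ handles $m \in [1, d-1]$; conditions $2$ and $3$ of Definition~\ref{decompofT} handle $m \in [2-d, 0]$; and for $m \in [d, 2d-2]$, $d$-CY duality rewrites $\Hom(\T_i, \T_j[m]) \cong D\Hom(\T_j, \T_i[d-m])$ with $d - m \in [2-d, 0]$, reducing to the same conditions after swapping $i$ and $j$.

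For the reverse inclusion, take $X$ in the intersection. By Lemma~\ref{lem-decomp}, $X \cong \bigoplus_l X_l$ with $X_l \in \C_l$, and each summand remains in the intersection, so it suffices to prove $X_l = 0$ for $l \neq i$. Setting $j = l$ and applying $d$-CY, the hypothesis on $X_l$ becomes $\Hom(\T_l, X_l[m]) = 0$ for all $m \in [2-d, d-1]$. The crucial sub-claim is: if $Y \in \T_l[a]*\cdots*\T_l[d-1]$ with $0 \le a \le d-2$ and $\Hom(\T_l, Y[-a]) = 0$, then $Y \in \T_l[a+1]*\cdots*\T_l[d-1]$. To prove it, take a defining triangle $A \to Y \to Y' \to A[1]$ with $A \in \T_l[a]$ and $Y' \in \T_l[a+1]*\cdots*\T_l[d-1]$; the hypothesis forces the first map to vanish, so the rotated triangle $Y \to Y' \to A[1] \to Y[1]$ has zero connecting map and splits as $Y' \cong Y \oplus A[1]$, making $Y$ a direct summand of $Y'$. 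The target subcategory is closed under direct summands by Proposition~2.1 of \cite{IY08} combined with the rigidity of $\T_l$.

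Iterating this sub-claim for $a = 0, 1, \ldots, d-2$ (the required shifts $-a \in [2-d, 0]$ all lie in the allowed range) reduces $X_l$ to lie in $\T_l[d-1]$, say $X_l = X'[d-1]$ with $X' \in \T_l$. The sub-claim cannot strip this last layer; for it I would invoke $d$-CY directly: $\Hom(\T_l, X_l[1]) = \Hom(\T_l, X'[d]) \cong D\Hom(X', \T_l)$, which is nonzero whenever $X' \neq 0$ (test with $\id_{X'}$); since $1 \in [2-d, d-1]$, the hypothesis forces $X' = 0$, hence $X_l = 0$. I expect the $\supseteq$ direction to be the main obstacle, with the crux being the precise matching between the single Hom-vanishing range $[2-d, d-1]$ and its two uses: the $d-1$ iterative peeling steps consume $[2-d, 0]$, while the final CY argument requires the remaining shift $m = 1$.
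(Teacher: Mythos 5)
Your proposal is correct and follows essentially the same path as the paper: the forward inclusion is the identical computation with Conditions 2--3 of Definition \ref{decompofT}, $d$-rigidity and Serre duality, and the reverse inclusion likewise decomposes $X$ via Lemma \ref{lem-decomp} and kills each $X_l$ ($l\neq i$) by peeling off one $\T_l$-layer at a time using exactly the vanishing $\Hom(X_l,\T_l[k])=0$ for $k=d-1,d,\dots,2d-2$, ending with the identity of a $\T_l$-object forced to vanish. The only cosmetic difference is that the paper runs the induction along the Iyama--Yoshino minimal-approximation triangles (showing each approximation map is zero, so $X_l^{(n)}\cong X_l[-n]$), whereas you split generic defining triangles of the $\ast$-product and invoke closure under direct summands.
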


\begin{proof}

By Definition \ref{decompofT}, $\Hom(\T_i,\T_j[l])=0$ and $\Hom(\T_j,\T_i[l])=0$, for $-(d-2)\leq l\leq d-1$, $i\neq j$.
 Then for $0\leq m\leq d-1$, $1\leq k\leq d-1$, we have that $\Hom(\T_i[m],\T_j[k])\cong\Hom(\T_i,\T_j[k-m])=0$
due to $-(d-2)\leq k-m\leq d-1$, and $\Hom(\T_i[m],\T_j[d+k-1])\cong D\Hom(\T_j[k-1],\T_i[m])=0$ as $-(d-2)\leq m-k+1\leq d-1$.
So $\Hom(\C_i,\T_j[k])=\Hom(\T_i\ast\T_i[1]\cdots\ast\T_i[d-1],\T_j[k])=0$ for $1\leq k\leq 2d-2$, $i\neq j$.
This implies $\C_i\subset\bigcap\limits_{j\neq i}\bigcap\limits_{k=1}^{2d-2}{}^\bot\T_j[k]$.

Fix an element $i\in I$. Let $X$ be an object satisfying $\Hom(X,\T_j[k])=0$ for $1\leq k\leq 2d-2$, $j\neq i$.
By Lemma \ref{lem-decomp}, $X$ has a decomposition $X=\oplus_{l\in J}X_l$, $X_l\in\C_l$, for some finite subset $J$ of $I$.
By the definition of $\C_l$, there are $d$ triangles:
$$X_l^{(n)}\rightarrow A_l^{(n-1)}\s{g_l^{(n-1)}}\rightarrow X_l^{(n-1)}\rightarrow X_l^{(n)}[1], \text{ } n=1,\cdots,d,$$
where $A_l^{(n-1)}\in\T_l$, $X_l^{(0)}=X_l$, $X_l^{(d)}=0$ and $g_l^{(n-1)}$ is the minimal right $\add\T_l-$approximation of $X_l^{(n-1)}$ (compare Corollary 3.3 in \cite{IY08}).
If $l\neq i$, then $g_l^{(0)}=0$ and $B_l^{(0)}=0$ thanks to
$\Hom(\T_l,X)=D\Hom(X,\T_l[d])=0$. So $X_l^{(1)}\cong X_l^{(0)}[-1]=X_l[-1]$.
Assume that $X_l^{(n)}\cong X_l[-n]$ for some $1\leq n\leq d-2$.
Then $g_l^{(n)}=0$ by $\Hom(\T_l,X[-n])\cong D\Hom(X,\T_l[d+n])=0$ and then $X_l^{(n+1)}\cong X_l^{(n)}[-1]\cong X_l[-(n+1)]$.
By induction on $n$, we have that $g_l^{(n-1)}=0$ and $X_l^{(n)}=X_l[-n]$, for $1\leq n\leq d-1$, $l\neq i$.
Note that $X_l^{(d-1)}\cong A_l^{(d-1)}$ by $X_l^{(d)}=0$.
 From that $\Hom(X_l^{(d-1)},X_l^{(d-1)})\cong\Hom(X_l[-(d-1)],A_l^{(d-1)})\cong\Hom(X_l,B_l^{(d-1)}[d-1])=0$, we have  $X_l^{(d-1)}=0$. Then $X_l\cong X_l^{(d-1)}[d-1]=0$ ($l\neq i$). Hence $X\cong X_i\in\C_i$.
\end{proof}

\begin{lem}\label{lemtrisubcat}

Under the same assumption as in Proposition \ref{prop1}, all $\C_i$, $i\in I$, are triangulated subcategories of $\C$.
\end{lem}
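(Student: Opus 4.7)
The plan is to verify that each $\C_i$ is closed under the shifts $[1]$ and $[-1]$, and under extensions; closure under finite direct sums and under direct summands is immediate (the latter was already noted after Proposition~\ref{prop1}). The main input will be Lemma~\ref{lemofcap} combined with a periodicity statement $\T_j[d]=\T_j$ for each $j\in I$.

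First I would establish the periodicity. For $X\in\T_j$, rigidity of $\T$ together with Serre duality yields $\T[d]=\T$, so $X[d]\in\T$; write $X[d]=\bigoplus_l Y_l$ with $Y_l\in\T_l$ according to Definition~\ref{decompofT}. For $l\neq j$, condition~2 of Definition~\ref{decompofT} gives $\Hom(Y_l,X[d])\cong D\Hom(X,Y_l)=0$, so the summand inclusion $Y_l\hookrightarrow X[d]$ is zero, forcing $Y_l=0$. Thus $\T_j[d]\subseteq\T_j$, and summing over $j$ and comparing with $\T[d]=\T$ forces equality.

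Closure of $\C_i$ under $[1]$ then follows from Lemma~\ref{lemofcap}: $X[1]\in\C_i$ iff $\Hom(X,\T_j[m])=0$ for $j\neq i$ and $m\in[0,2d-3]$. The values $m\in[1,2d-3]$ come directly from $X\in\C_i$, while $m=0$ is obtained via $\Hom(X,\T_j)=\Hom(X,\T_j[d])=0$ using periodicity and $d\in[1,2d-2]$. The argument for $[-1]$ is symmetric, with the only new shift $m=2d-1$ reduced to $m=d-1\in[1,2d-2]$ by periodicity.

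For extensions, given a triangle $X\to Y\to Z\to X[1]$ with $X,Z\in\C_i$, applying $\Hom(-,\T_j[k])$ for $j\neq i$ and $k\in[1,2d-2]$ sandwiches $\Hom(Y,\T_j[k])$ between two zero groups in the long exact sequence, forcing $\Hom(Y,\T_j[k])=0$; Lemma~\ref{lemofcap} then gives $Y\in\C_i$. The main obstacle will be establishing the periodicity $\T_j[d]=\T_j$; this is where the $d$-Calabi-Yau hypothesis combines essentially with condition~2 of Definition~\ref{decompofT}. Once periodicity is in hand, the remaining closure properties follow by direct manipulation of Lemma~\ref{lemofcap}.
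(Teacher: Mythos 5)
Your treatment of extension-closure is correct and is exactly the paper's argument. However, the pivot of your proof of closure under shifts --- the periodicity $\T_j[d]=\T_j$ --- is false, and the step where you assert it ("rigidity of $\T$ together with Serre duality yields $\T[d]=\T$") does not go through. What rigidity plus Serre duality gives is $\Ext^i(\T,\T)\cong D\Ext^{d-i}(\T,\T)$ for $1\le i\le d-1$, which is a self-dual statement about the degrees $1,\dots,d-1$ and says nothing about $T[d]$ lying in $\T$: to test $T[d]\in\T$ one needs $\Hom(\T,T[d+i])=0$ for $1\le i\le d-1$, and by Serre duality this is $D\Hom(T,\T[-i])$, a \emph{negative} extension group, which does not vanish in general. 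Concretely, take $d=2$ and $\C$ the cluster category of type $A_2$ with $\T=\add(kQ)$: here $[1]\cong\tau$ acts as a $5$-cycle on the five indecomposables, so $[2]$ has no invariant two-element subset and $\T[2]\neq\T$. (The same failure occurs for a genuinely decomposable example, e.g. $Q$ a disjoint union of two quivers of type $A_2$ with $\T_i=\add(kQ_i)$.) Since your verification that $\C_i$ is closed under $[1]$ and $[-1]$ reduces the missing Hom-vanishings ($m=0$, resp.\ $m=2d-1$) precisely to this periodicity, that part of the proof collapses.

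What is actually true, and what the paper proves, is the weaker containment $\T_i[d]\subseteq\C_i$ (the block is preserved, not the subcategory $\T_i$ itself). The argument: by Lemma \ref{lem-decomp} every indecomposable $X\in\T_i$ has $X[d]$ lying in some block $\C_j$; from Lemma \ref{lemofcap} together with Serre duality one gets $\Hom(\T_i,\C_j)\cong D\Hom(\C_j,\T_i[d])=0$ for $j\neq i$ (here $1\le d\le 2d-2$ is used); and since $\Hom(X,X[d])\cong D\,\End(X)\neq0$, the only possibility is $j=i$. Combined with your (correct) extension-closure step this yields $\C_i[1]=\T_i[1]\ast\cdots\ast\T_i[d]\subseteq\C_i$, and dually for $[-1]$. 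So the overall architecture of your proof is sound, but the key lemma must be replaced by this block-preservation statement; as written, the proposal has a genuine gap.
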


\begin{proof}

Let $X\rightarrow Z\rightarrow Y\rightarrow X[1]$ be a triangle with $X,Y\in\C_i$. By Lemma \ref{lemofcap},
we have that $\Hom(X,\T_j[k])=0$ and $\Hom(Y,\T_j[k])=0$ and then $\Hom(Z,\T_j[k])=0$ for $1\leq k\leq 2d-2$, $j\neq i$.
By Lemma \ref{lemofcap} again, we have that $Z\in\C_i$. Therefore, $\C_i$ is closed under extensions.

For any $i$, $\T, \T[1], \cdots, \T[d-1]$ are included in $\C$. We claim that $\T_i[d]$ is a subcategory of $\C_i$.
Otherwise, there is an indecomposable object of $\T_i$, say $X$,
such that $X[d]$ is not an object of $\C_i$. Then by Lemma \ref{lem-decomp}, $X[d]$ is in $\C_j$ for some $j\neq i$.
Note that $\Hom(X,X[d])\cong D\Hom(X,X)\neq0$ which contradicts with $\Hom(\T_i,\C_j)=0$ by Lemma \ref{lemofcap}
and $d-$CY property.
Then we prove that
$\T_i[d]$ is included in $\C_i$. Hence $\C_i[1]=\T_i[1]\ast\cdots\ast\T_i[d]\subset\C_i$, that is, $\C_i$ is closed under [1].
Dually, one can prove that $\C_i$ is closed under [-1]. Therefore, $\C_i$ is a triangulated subcategory of $\C$.
\end{proof}

\textbf{Proof of Proposition \ref{prop1}}.
It is sufficient to verify that $\Hom(\C_i,\C_j)=0$, for $i\neq j$. By Lemma \ref{lemtrisubcat},
$\C_i[-1]=\C_i$, then $\Hom(\C_i,\T_j)=\Hom(\C_i[-1],\T_j)=\Hom(\C_i,\T_j[1])=0$ for $i\neq j$, where the last
equality is due to Lemma \ref{lemofcap}. Then $\Hom(\C_i,\C_j)=\Hom(\C_i,\T_j\ast\T_j[1]\ast\cdots\ast\T_j[d-1])=0$.

\bigskip

The following lemma is a generalization of Remark 2.3 in \cite{ZZ1}.

\begin{lem}\label{lemofremkzz}
Let $\C$ be a triangulated category and $\T$ be a $d-$rigid subcategory of $\C$
satisfying $\C=\T\ast\T[1]\ast\cdots\ast\T[d-1]$. Then $\T$ is a $d-$cluster tilting
subcategory of $\C$.
\end{lem}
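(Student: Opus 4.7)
The plan is to verify the two defining properties of a $d$-cluster tilting subcategory: functorial finiteness, and the characterizations of $\T$ by Ext-vanishing.

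First I would establish functorial finiteness. Given any $M \in \C = \T\ast\T[1]\ast\cdots\ast\T[d-1]$, unfolding the decomposition produces a triangle $T_0 \to M \to X_1 \to T_0[1]$ with $T_0 \in \T$ and $X_1 \in \T[1]\ast\cdots\ast\T[d-1]$. By $d$-rigidity, $\Hom(\T,\T[k]) = 0$ for $1 \le k \le d-1$, and iterating this through the subtriangles defining $X_1$ gives $\Hom(\T, X_1) = 0$; hence $T_0 \to M$ is a right $\T$-approximation. Shifting the decomposition by $-(d-1)$ yields $\C = \T[-(d-1)]\ast\cdots\ast\T$, from which a symmetric argument produces left $\T$-approximations.

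Next, for the characterization $\T = \{X : \Ext^i(\T, X) = 0,\ 1 \le i \le d-1\}$, the forward inclusion is immediate from $d$-rigidity. For the converse, let $X$ satisfy the Ext-vanishing and form the left approximation triangle $X_L \to X \to T_0 \to X_L[1]$ with $X_L \in \T[-(d-1)]\ast\cdots\ast\T[-1]$. Applying $\Hom(\T,-)$ to shifts of this triangle and combining the hypothesis with rigidity yields $\Ext^k(\T, X_L) = 0$ for $2 \le k \le d-1$.

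The key step is to reduce the ``depth'' of $X_L$ iteratively. For each $k$ descending from $d-1$ to $2$, assuming $X_L \in \T[-k]\ast\cdots\ast\T[-1]$, extract its bottom layer via a triangle $T'[-k] \to X_L \to X_L' \to T'[-k+1]$ with $T' \in \T$ and $X_L' \in \T[-(k-1)]\ast\cdots\ast\T[-1]$. Shifting by $k$, the connecting map $T' \to X_L[k]$ lies in $\Ext^k(T', X_L) = 0$, so the shifted triangle splits and yields $X_L' \cong T'[-k+1] \oplus X_L$. Since $\T[-(k-1)]\ast\cdots\ast\T[-1]$ is closed under direct summands by the Hom-vanishings from $d$-rigidity together with Proposition 2.1 of \cite{IY08}, it follows that $X_L$ lies in this smaller subcategory, lowering its depth. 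Iteration yields $X_L \in \T[-1]$, so $X_L = T'[-1]$ for some $T' \in \T$. The long exact sequence of $\Hom(T',-)$ then supplies a surjection $\Hom(T', T_0) \twoheadrightarrow \Hom(T', T')$, and lifting $\mathrm{id}_{T'}$ provides a section of the connecting map $T_0 \to T'$. The triangle $T'[-1] \to X \to T_0 \to T'$ thereby splits off a trivial piece, exhibiting $X$ as a direct summand of $T_0$, hence $X \in \T$.

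The dual characterization $\T = \{X : \Ext^i(X, \T) = 0,\ 1 \le i \le d-1\}$ follows by a symmetric argument using the right approximation and peeling top layers of the cone $X_R \in \T[1]\ast\cdots\ast\T[d-1]$. The main obstacle is the depth-reduction iteration, which requires careful tracking of shifted triangles and repeated invocation of closure under direct summands for intermediate extension subcategories.
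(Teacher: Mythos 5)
Your proof is correct and rests on the same two ingredients as the paper's: the decomposition $\C=\T\ast\T[1]\ast\cdots\ast\T[d-1]$ supplies the approximation triangles for functorial finiteness, and the Ext-vanishing hypotheses force the defining triangle of $X$ to split, exhibiting $X$ as a summand of an object of $\T$. The only difference is economy: the paper packages your layer-by-layer peeling into one step by noting that $(\T,\T[1]\ast\cdots\ast\T[d-1])$ and $(\T\ast\cdots\ast\T[d-2],\T[d-1])$ are torsion pairs, so that $\T={}^{\bot}(\T[1]\ast\cdots\ast\T[d-1])$ and the membership criterion follows immediately.
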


\begin{proof}
Note that $(\T,\T[1]\ast\cdots\ast\T[d-1])$ and $(\T\ast\cdots\ast\T[d-2],\T[d-1])$ form two torsion pairs.
So $\T$ is contravariantly finite in $\C$ and $\T[d]$ is covariantly finite in $\C$. Therefore $\T$ is functorially finite in $\C$. Take an object $X$ in $\C$ with $\Hom(X,\T[t])=0$ for $1\leq t\leq d-1$.
Then $\Hom(X,\T[1]\ast\cdots\ast\T[d-1])=0$. Hence $X\in\T$. Similar proof for $X\in \T$ if $\Hom(\T,X[t])=0$ for $1\leq t\leq d-1$.
 Hence $\T$ is $d-$cluster tilting in $\C$.
\end{proof}

Now we prove our main result in this section.

\begin{thm}\label{decomthm}
Let $\C$ be a $d-$CY triangulated category with a $d-$cluster tilting subcategory $\T$.
Then $\C$ is a direct sum of indecomposable triangulated subcategories $\C_i$, $i\in I$ if and only if
 the cluster tilting subcategory $\T$ is a direct sum of indecomposable subcategories $\T_i, i\in I$.
 Moreover $\C_i=\T_i\ast\T_i[1]\ast\cdots\ast\T_i[d-1]$ and $\T_i$ is a $d-$cluster tilting subcategory in $\C_i$, $i\in I$.
\end{thm}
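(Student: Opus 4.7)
The plan is to prove both implications using Proposition \ref{prop1} together with an intersection argument, and to derive the moreover part from Lemma \ref{lemofremkzz} after establishing the explicit description $\C_i=\T_i\ast\T_i[1]\ast\cdots\ast\T_i[d-1]$.

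For the direction ``$\T=\oplus_{i\in I}\T_i$ implies $\C=\oplus_{i\in I}\C_i$'', Proposition \ref{prop1} already gives the decomposition $\C=\oplus_{i\in I}\C_i$ with $\C_i=\T_i\ast\T_i[1]\ast\cdots\ast\T_i[d-1]$ triangulated, so the task reduces to transferring indecomposability of $\T_i$ to $\C_i$. I would argue by contradiction: if $\C_i=\D_1\oplus\D_2$ with $\D_1,\D_2$ nonzero triangulated and $\Hom(\D_1,\D_2)=\Hom(\D_2,\D_1)=0$, then setting $\T_{i,l}=\T_i\cap\D_l$ and using closure of $\T$ under direct summands, any $T\in\T_i$ splits into pieces in $\T_{i,l}$, and conditions 2 and 3 of Definition \ref{decompofT} are automatic from $\D_l$ being triangulated and Hom-orthogonal. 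To see both $\T_{i,l}$ are nonzero, I take a nonzero $X\in\D_l$ and run the tower $X^{(n)}\to B^{(n-1)}\to X^{(n-1)}\to X^{(n)}[1]$ from Corollary 3.3 of \cite{IY08} with $B^{(n-1)}\in\add\T_i$; since $X\neq 0$ at least one $B^{(n-1)}$ is nonzero, and minimality of the right $\T$-approximation forces it to lie in $\D_l$, contradicting indecomposability of $\T_i$.

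For the direction ``$\C=\oplus_{i\in I}\C_i$ implies $\T=\oplus_{i\in I}\T_i$'', I set $\T_i=\T\cap\C_i$. Every $T\in\T$ splits along $\oplus\C_i$, and each summand lies in $\T$ by closure under summands, hence in $\T_i$; the orthogonality conditions 2 and 3 of Definition \ref{decompofT} are immediate from each $\C_i$ being triangulated (so $\T_j[k]\subset\C_j$) and $\Hom(\C_i,\C_j)=0$. For the moreover part, I would first observe that each $\C_i$ inherits the $d$-CY property, because Serre duality descends across the blocks thanks to $\Hom(\C_i,\C_j)=0$ for $i\neq j$. Then I run the $\T$-tower inside $\C_i$: using $\Hom(\T_j,\C_i)=D\Hom(\C_i,\T_j[d])=0$ for $j\neq i$, the minimal right $\T$-approximation of any object of $\C_i$ must lie entirely in $\T_i$, which yields $\C_i=\T_i\ast\T_i[1]\ast\cdots\ast\T_i[d-1]$. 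Since $\T_i$ inherits $d$-rigidity from $\T$, Lemma \ref{lemofremkzz} upgrades it to a $d$-cluster tilting subcategory of $\C_i$. Finally, any nontrivial decomposition $\T_i=\T_i'\oplus\T_i''$ in the sense of Definition \ref{decompofT} would, via Proposition \ref{prop1} applied inside $\C_i$, decompose $\C_i$ nontrivially, contradicting indecomposability of $\C_i$, so $\T_i$ is indecomposable.

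The main obstacle I foresee is the minimality step that appears in both directions: the $\T$-approximation terms $B^{(n-1)}$ live \emph{a priori} only in $\C_i$ (or $\C$), and one must show they actually land in the correct triangulated block $\D_l$ (or $\C_i$). The key observation is that each intermediate $X^{(n-1)}$ stays in the correct block by induction together with closure of that block under $[\pm 1]$ and extensions, so any summand of $B^{(n-1)}$ sitting in a complementary block would give a split direct summand of $B^{(n-1)}$ on which the approximation map vanishes, contradicting the minimality characterization of the right $\T$-approximation. Once this is in place, everything else is a direct assembly from Definition \ref{decompofT}, Proposition \ref{prop1} and Lemma \ref{lemofremkzz}.
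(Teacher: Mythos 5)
Your proposal is correct and follows essentially the same route as the paper: Proposition \ref{prop1} supplies the decomposition in one direction, $\T_i=\T\cap\C_i$ and the identification $\C_i=\T_i\ast\cdots\ast\T_i[d-1]$ handle the other, Lemma \ref{lemofremkzz} gives the cluster-tilting claim, and indecomposability is transferred by contradiction in both directions. The only difference is presentational: where the paper invokes Lemma \ref{lemofcap}, you rerun the Iyama--Yoshino approximation tower with a minimality argument, which is exactly how that lemma is proved anyway.
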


\begin{proof}
We first show the "only if" part. By the definition of direct sums of triangulated subcategories,
any object $T$ in $\T$ has a decomposition $T=\oplus_{i\in J} T_i$ with $J$ a finite subset of $I$, $T_i\in\C_i$
and $\Hom(T_i[k],T_j)=0$ for $0\leq k\leq d-2$, $i\neq j$. Then $\T=\oplus_{i\in I} \T_i$ where $\T_i=\T \bigcap \C_i$.
By Definition \ref{decompofC}, for any object $X\in\C_i$, $\Hom(X,\T_j[k])=0$ for $j\neq i$ and any $k$.
Then by Lemma \ref{lemofcap}, $X\in\T_i\ast\T_i[1]\ast\cdots\ast\T[d-1]$. Hence $\C_i=\T_i\ast\T_i[1]\ast\cdots\ast\T[d-1]$.
By Lemma \ref{lemofremkzz}, $\T_i$ is a $d-$cluster tilting subcategory of $\C_i$.
It follows from that of $\C_i$ and Proposition \ref{prop1} that $\T_i$ is indecomposable.

To prove the "if" part. It follows from Proposition \ref{prop1} that there is a decomposition
$\C=\oplus_{i\in I}\C_i$, where $\C_i=\T_i*\T_i[1]*\cdots*\T_i[d-1]$ is a triangulated subcategory of $\C$.
By Lemma \ref{lemofremkzz}, $\T_i$ is a $d-$cluster tilting subcategory in $\C_i$. If $\C_i$ is not indecomposable, say $\C_i=\C_i'\oplus \C_i''$ with nonzero triangulated subcategories $\C_i',\C_i''$,
 then by the proof of the ``only if'' part, we have
$\T_i=\T_i'\oplus T_i''$, and $\C_i'=\T_i'*\T_i'[1]*\cdots*\T_i'[d-1], \C_i''=\T_i''*\T_i''[1]*\cdots*\T_i''[d-1]$. It follows that $\T_i',\T_i''$ are nonzero subcategories, a contradiction to the indecomposableness of $\T_i$.

The other assertion follows from Lemma \ref{lemofremkzz}.
\end{proof}

We give a simple example for $d=2$.

\begin{exm} Let $Q: 4\rightarrow 3\rightarrow 2\rightarrow 1$, $\mathcal{C} =\mathcal{C} _Q$, the cluster category of $Q$ whose Auslander-Reiten quiver is the following:

\begin{center}
\setlength{\unitlength}{1cm}
\begin{picture}(10,4)
\mul(0,0)(2,0){6}{$\c$}
\mul(1,1)(2,0){5}{$\c$} \mul(2,2)(2,0){4}{$\c$}
\mul(3,3)(2,0){3}{$\c$}

\mul(0.15,0.15)(2,0){5}{\vector(1,1){0.9}}
\mul(1.15,1.15)(2,0){4}{\vector(1,1){0.9}}
\mul(1.15,1.05)(2,0){5}{\vector(1,-1){0.9}}
\mul(2.15,2.05)(2,0){4}{\vector(1,-1){0.9}}
\mul(2.15,2.15)(2,0){3}{\vector(1,1){0.9}}
\mul(3.15,3.05)(2,0){3}{\vector(1,-1){0.9}}

\put(-0.9,0){${P_1[1]}$} \put(0.1,1){$P_2[1]$}
\put(1.1,2){$P_3[1]$} \put(2.1,3){$P_4[1]$}
\put(1.5,0){$P_1$} \put(2.5,1){$P_2$}
\put(3.5,2){$  P_3$} \put(4.5,3){$  P_4$}
\put(3.5,0){$ S_2$} \put(5.5,0){$ S_3$} \put(4.5,1){$ E$}
\put(5.5,2){$  I_2$} \put(6.5,1){$I_3$} \put(7.5,0){$S_4$}
\put(6,3){${ P_1[1]}$} \put(7,2){$P_2[1]$}
\put(8,1){$P_3[1]$} \put(9,0){$P_4[1]$}
\end{picture}
\end{center}

\bigskip

We take $\mathcal{X}=add(E)$, ${}^{\bot}(\mathcal{X}[1])=add(\{E, P_3, P_4[1],P_4,I_2,P_1[1], S_2, S_3\}$
By \cite{IY08}, the subquotient category ${}^{\bot}(\mathcal{X}[1])/\mathcal{X}=\add(\{ P_3, P_4[1],P_4,I_2,P_1[1], S_2, S_3\})$
 is triangulated, and $2-$CY.
This subquotient category admits cluster tilting objects, for example, the object $T=P_4[1]\oplus P_3\oplus E\oplus S_3$.
 We have that in this subquotient category, $\add T=\add(S_3)\oplus \add(P_3\oplus P_4[1])$. Then by Theorem \ref{decomthm}, this subquotient category
${}^{\bot}(\mathcal{X}[1])/\mathcal{X}=\add(\{S_2,S_3\})\oplus \add(\{ P_3, P_4[1], P_4, I_2, P_1[1]\})$, in which, the first direct summand is equivalent to the cluster category of type $A_1$,
the second one is equivalent to the cluster category of type $A_2$.

\end{exm}

\begin{cor}\label{cor1}
Let $\C$ be a $d-$CY triangulated category admitting a $d-$cluster tilting subcategory $\T$. Then $\C$ is indecomposable if and only if $\T$ is indecomposable.
\end{cor}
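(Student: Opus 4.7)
This corollary is essentially the single-summand case of Theorem \ref{decomthm}, and the plan is to derive both implications by contrapositive arguments that invoke Proposition \ref{prop1} and the ``only if'' direction of Theorem \ref{decomthm}.

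For the forward implication, I would assume $\T$ decomposes nontrivially as $\T=\T_1\oplus\T_2$ with both $\T_i$ nonzero subcategories satisfying the three conditions of Definition \ref{decompofT}. Applying Proposition \ref{prop1}, I set $\C_i=\T_i*\T_i[1]*\cdots*\T_i[d-1]$ and obtain a decomposition $\C=\C_1\oplus\C_2$ into triangulated subcategories. Since each $\T_i$ is nonzero and is contained in the corresponding $\C_i$, both $\C_i$ are nonzero, contradicting the indecomposability of $\C$.

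For the converse, I would assume $\C=\C_1\oplus\C_2$ is a nontrivial decomposition into triangulated subcategories. Invoking the ``only if'' direction established within the proof of Theorem \ref{decomthm}, this forces $\T=\T_1\oplus\T_2$ with $\T_i=\T\cap\C_i$, and moreover $\C_i=\T_i*\T_i[1]*\cdots*\T_i[d-1]$ with $\T_i$ a $d$-cluster tilting subcategory of $\C_i$. Since $\C_i\neq 0$, the identity $\C_i=\T_i*\T_i[1]*\cdots*\T_i[d-1]$ forces $\T_i\neq 0$, contradicting the indecomposability of $\T$.

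The main obstacle is essentially nonexistent, since the heavy lifting has already been done in Theorem \ref{decomthm}; the only minor point to record is the observation that a nonzero triangulated summand $\C_i$ must carry a nonzero cluster tilting subcategory $\T_i$, which is immediate from the fact that every object of $\C_i$ is built out of $\T_i$ and its shifts via $\C_i=\T_i*\T_i[1]*\cdots*\T_i[d-1]$.
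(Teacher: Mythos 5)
Your proof is correct and follows essentially the same route as the paper, which states this corollary as an immediate consequence of Theorem \ref{decomthm} (and Proposition \ref{prop1}) without further argument. Your only addition is to spell out that a nonzero summand $\C_i=\T_i*\T_i[1]*\cdots*\T_i[d-1]$ forces $\T_i\neq 0$, which is a harmless and valid elaboration.
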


\begin{cor}\label{corind}
Let $\C$ be a $d-$CY triangulated category,  $\T$ and $\T'$ be two $d-$cluster tilting subcategories. Then $\T$ is indecomposable if and only if $\T'$ is indecomposable.
\end{cor}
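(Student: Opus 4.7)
The plan is to deduce this immediately from Corollary \ref{cor1}, which characterizes the indecomposability of $\C$ intrinsically, independent of the chosen $d$-cluster tilting subcategory. The key observation is that indecomposability of $\T$ has, via the previous corollary, been promoted from a property of the subcategory $\T$ to a property of the ambient category $\C$ itself, so two different $d$-cluster tilting subcategories in the same $\C$ must agree on this property.

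Concretely, I would argue as follows. Apply Corollary \ref{cor1} to the $d$-cluster tilting subcategory $\T \subset \C$: this yields the biconditional
\[
\T \text{ is indecomposable} \iff \C \text{ is indecomposable}.
\]
Apply Corollary \ref{cor1} again, this time to $\T' \subset \C$, to obtain
\[
\T' \text{ is indecomposable} \iff \C \text{ is indecomposable}.
\]
Chaining the two biconditionals through the common middle term ``$\C$ is indecomposable'' gives $\T$ indecomposable iff $\T'$ indecomposable, as required.

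There is no real obstacle here, since all the substantive work has been carried out in Theorem \ref{decomthm} (which shows that any decomposition of $\T$ as a direct sum in the sense of Definition \ref{decompofT} lifts to a decomposition of $\C$ into triangulated subcategories, and conversely) and in Corollary \ref{cor1}. The only thing one might wish to double-check is that the statement of Corollary \ref{cor1} is being applied to both $\T$ and $\T'$ under identical hypotheses; but both are $d$-cluster tilting subcategories of the same $d$-CY triangulated category $\C$, so the hypotheses are satisfied in both cases.
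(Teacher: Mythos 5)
Your argument is correct and is exactly how the paper intends this corollary to be read: it is stated immediately after Corollary \ref{cor1} with no separate proof, precisely because chaining the two instances of that biconditional through the indecomposability of $\C$ is all that is needed. Nothing further is required.
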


\begin{cor} Let $\C$ be a $d-$CY triangulated category with a $d-$cluster tilting object $T$.
Then $\C$ is a direct sum of finitely many indecomposable triangulated subcategories $\C_i$, $i=1, \cdots m$. Moreover the cluster tilting subcategory $\T=\add T$ is
a direct sum of indecomposable subcategories $\T_i, i=1,\cdots, m$, and $\C_i=\T_i\ast\T_i[1]\ast\cdots\ast\T_i[d-1]$
and $\T_i$ is a $d-$cluster tilting subcategory in $\C_i$, $i=1,\cdots m$.

\end{cor}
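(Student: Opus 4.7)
The plan is to specialize Theorem \ref{decomthm} to the case where the $d$-cluster tilting subcategory comes from a single object. The essential step is to produce a finite decomposition of $\T=\add T$ as a direct sum of indecomposable subcategories in the sense of Definition \ref{decompofT}; once this is done, Theorem \ref{decomthm} immediately supplies the triangulated decomposition of $\C$ together with all the stated properties.

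Since $T$ is a single object in a Hom-finite Krull--Schmidt category, $\T=\add T$ has only finitely many pairwise non-isomorphic indecomposable objects, say $X_1,\dots,X_n$. I would define a graph $G$ with these as vertices, drawing an edge between $X_a$ and $X_b$ whenever there is some $0\le t\le d-2$ with $\Hom(X_a[t],X_b)\neq 0$ or $\Hom(X_b[t],X_a)\neq 0$. Let the connected components of $G$ be $\mathcal{E}_1,\dots,\mathcal{E}_m$ and set $\T_i=\add\bigl(\bigoplus_{X\in\mathcal{E}_i}X\bigr)$. Then $\T=\T_1\oplus\cdots\oplus\T_m$ satisfies all three conditions of Definition \ref{decompofT}: condition 1 holds by Krull--Schmidt, condition 2 is the $t=0$ instance of the edge rule, and condition 3 ($\Hom(\T_i[k],\T_j)=0$ for $1\le k\le d-2$, $i\neq j$) follows because vertices in distinct components have no such Hom by construction. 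Moreover each $\T_i$ is indecomposable in the sense of Definition \ref{decompofT}: a further nontrivial such decomposition would split $\mathcal{E}_i$ into two nonempty subsets with all Homs in degrees $0,\dots,d-2$ between them vanishing, contradicting that $\mathcal{E}_i$ is a connected component of $G$.

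With the decomposition $\T=\T_1\oplus\cdots\oplus\T_m$ into finitely many indecomposable pieces in hand, Theorem \ref{decomthm} produces a corresponding decomposition $\C=\bigoplus_{i=1}^m\C_i$ into indecomposable triangulated subcategories with $\C_i=\T_i*\T_i[1]*\cdots*\T_i[d-1]$, and with each $\T_i$ a $d$-cluster tilting subcategory of $\C_i$; this is exactly the statement of the corollary. No step here is technically difficult; the only point requiring attention is ensuring the index set is finite, which is precisely where the hypothesis that $\T$ comes from an object (rather than a general subcategory) is used.
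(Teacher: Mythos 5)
Your proof is correct, but it runs Theorem \ref{decomthm} in the opposite direction from the paper. The paper's own proof starts on the side of $\C$: it invokes the (complete) decomposition of $\C$ into indecomposable triangulated subcategories, which it takes as given for any triangulated category, observes that the number of nonzero summands must be finite because each forces a nonzero summand of $\T=\add T$ and $T$ has only finitely many indecomposable summands, and then reads off the rest from the ``only if'' direction of Theorem \ref{decomthm}. You instead construct the decomposition of $\T$ first, as the connected components of the Hom-graph on the indecomposable summands of $T$ with edges recording nonvanishing of $\Hom(-[t],-)$ for $0\le t\le d-2$ in either direction, verify the three conditions of Definition \ref{decompofT} and the indecomposability of each component, and then apply the ``if'' direction of the theorem. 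Both arguments are valid and of comparable length; yours has the advantage of being more self-contained (it does not presuppose the existence of a complete decomposition of $\C$ into indecomposables, which the paper only states explicitly a bit later) and of making the finiteness of the index set completely transparent, while the paper's version is shorter given that background fact. One small point worth making explicit in your write-up: a decomposition of $\T_i$ in the sense of Definition \ref{decompofT} necessarily partitions the indecomposables of $\mathcal{E}_i$ (two summand subcategories cannot share a nonzero indecomposable since Hom between them vanishes), which is what lets you identify such a decomposition with a disconnection of the component $\mathcal{E}_i$.
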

\begin{proof}
Any triangulated category can be decomposed as a direct sum of triangulated subcategories. For the $d-$CY triangulated category $\C$ with a $d-$cluster tilting object $T$, the number of direct summands of the decomposition
of $\C$ is finite since that the number of indecomposable direct summands of $T$ is finite. Then we have the decomposition of $\C=\oplus_{i=1}^{m}\C_i$. The other assertion follows directly from Theorem \ref{decomthm}.

\end{proof}

For the special case of $d=2$, i.e., $\C$ is $2-$CY triangulated category with a cluster tilting object $T$, the decomposition of
$\C$ corresponds to the partition of connected components of the Gabriel quiver of $\End(T)$.

\begin{defn}
A basic rigid object $T$ in $\C$ is called connected provided $T$ cannot written as $T=T_1\oplus T_2$ with
property that $T_i\not= 0$, and $Hom(T_i,T_j)=0$, for $i\neq j\in \{1,2 \}$. Any cluster tilting object in $\C$ can be decomposed as a direct sum of connected summands:
 $T=\oplus_{i=1}^{m}T_i$ with $T_i$ being connected. We call such decomposition a complete decomposition of $T$.
\end{defn}

Every $\C$ can be decomposed uniquely to a direct sum of nonzero indecomposable triangulated subcategories.
We call this decomposition is the complete decomposition of $\C$ and denote by $ns(\C)$ the number of indecomposable direct summands of such decomposition of $\C$.
For a cluster tilting object $T$ in $\C$, the Gabriel quiver of $\End(T)$ is denoted by $\Gamma_T$ and the number of connected components of $\Gamma_T$
is denoted by $nc(\Gamma_T)$.
Note that the complete decomposition of $T$ corresponds to the connected components of Gabriel quiver of the $2-$CY tilted algebra $\End (T)$.
So by applying the theorem above, we have the following result immediately.

\begin{cor}\label{cor11}
Let $\C$ be a $2-$CY triangulated category admitting a cluster tilting object $T$. Then the number $nc(\Gamma_T)$ of connected components of the quiver $\Gamma_T$ is equal to $ns(\C)$.
In particular, $\C$ is indecomposable if and only if $\Gamma_T$ is connected.
\end{cor}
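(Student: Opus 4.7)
\textbf{Proof plan for Corollary \ref{cor11}.}

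The strategy is to connect the complete decomposition of $T$ (in the sense defined just before the corollary) with both the connected components of $\Gamma_T$ and the complete decomposition of $\C$. Let $T=\oplus_{i=1}^{m}T_i$ be the complete decomposition into connected summands. My first step is to check that $m=nc(\Gamma_T)$: by standard facts on basic algebras, the Gabriel quiver of $\End(T)$ splits as a disjoint union of its connected components precisely when $\End(T)$ is a product of indecomposable $k$-algebras, which in turn corresponds to writing $T$ as a direct sum of summands with no nonzero maps between them. Thus the connected summands of $T$ are in bijection with the connected components of $\Gamma_T$.

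Next I want to upgrade the additive decomposition $\T=\add T=\oplus_{i=1}^{m}\add T_i$ into a decomposition of the cluster tilting subcategory in the sense of Definition \ref{decompofT}. Condition 1 is clear, and condition 2 holds by the very definition of a connected summand (there are no nonzero maps between distinct $T_i$'s). Condition 3 is vacuous here since $d=2$ makes the range $1\le k\le d-2$ empty. Furthermore each $\add T_i$ is indecomposable as a subcategory in the sense of Definition \ref{decompofT}: any further decomposition $\add T_i=\mathcal{S}\oplus\mathcal{S}'$ with both summands nonzero and mutually Hom-orthogonal would produce a nontrivial splitting of $T_i$ contradicting its connectedness.

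Now I can invoke Theorem \ref{decomthm} (the equivalence between decompositions of $\C$ and decompositions of $\T$ for a $d$-CY triangulated category): since $\T=\oplus_{i=1}^{m}\add T_i$ with each $\add T_i$ indecomposable, it follows that $\C=\oplus_{i=1}^{m}\C_i$ with each $\C_i=\add T_i * (\add T_i)[1]$ an indecomposable triangulated subcategory. Combined with the (routine) uniqueness of the complete decomposition of $\C$ into indecomposable triangulated subcategories, this gives $ns(\C)=m=nc(\Gamma_T)$. The ``in particular'' statement is then the special case $ns(\C)=1\iff nc(\Gamma_T)=1$.

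I do not expect a serious obstacle here, since the heavy lifting has been done in Theorem \ref{decomthm}; the only point requiring care is the translation between the purely combinatorial notion ``connected components of $\Gamma_T$'', the algebraic notion ``block decomposition of $\End(T)$'', and the categorical notion ``connected summands of $T$ in the sense of Definition 3.13''. Once these three are identified, the corollary is immediate from Theorem \ref{decomthm}.
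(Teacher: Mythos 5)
Your proposal is correct and takes essentially the same route as the paper: the paper's own justification consists of the remark that the complete decomposition of $T$ corresponds to the connected components of the Gabriel quiver of $\End(T)$, after which the corollary is declared immediate from Theorem \ref{decomthm}. You have simply made explicit the same chain of identifications (connected summands of $T$ $\leftrightarrow$ blocks of $\End(T)$ $\leftrightarrow$ components of $\Gamma_T$, the vacuousness of condition 3 of Definition \ref{decompofT} when $d=2$, and the uniqueness of the complete decomposition of $\C$).
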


\begin{cor}\label{connected}
Let $\C$ be a $2-$CY triangulated category and let $T,T'$ be cluster tilting objects in $\C$. Then $\Gamma_T$ is connected if and only if $\Gamma_{T'}$ is connected.
\end{cor}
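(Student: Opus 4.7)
The statement is essentially an immediate transitivity consequence of Corollary \ref{cor11}, so the plan is very short. The intermediary invariant we would use is indecomposability of the ambient triangulated category $\C$ itself, which depends only on $\C$ and not on any particular choice of cluster tilting object.

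First, I would apply Corollary \ref{cor11} to the cluster tilting object $T$: by that statement, $\Gamma_T$ is connected if and only if $\C$ is indecomposable as a triangulated category. Then I would apply Corollary \ref{cor11} a second time, this time to the cluster tilting object $T'$ (which is available because $\C$ is a $2$-CY triangulated category admitting a cluster tilting object, so the hypothesis of Corollary \ref{cor11} is met by $T'$ as well), obtaining that $\Gamma_{T'}$ is connected if and only if $\C$ is indecomposable. Chaining these two equivalences gives the claim.

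Alternatively, and perhaps more in the spirit of the section, one can deduce the corollary from Corollary \ref{corind} applied with $d=2$: the subcategories $\T=\add T$ and $\T'=\add T'$ are cluster tilting, and Corollary \ref{corind} tells us that $\T$ is indecomposable if and only if $\T'$ is. For $d=2$ the third vanishing condition in Definition \ref{decompofT} is empty, so indecomposability of $\add T$ as a subcategory in the sense of Definition \ref{decompofT} reduces to the additive statement that the Hom-quiver of $\add T$, namely $\Gamma_T$, is connected; similarly for $T'$. Either route is essentially a one-line deduction, so there is no real obstacle: all the work has already been done in Theorem \ref{decomthm} and its corollaries.
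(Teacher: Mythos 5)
Your proposal is correct and its first route is exactly the paper's proof: chain the two equivalences $\Gamma_T$ connected $\Leftrightarrow$ $\C$ indecomposable $\Leftrightarrow$ $\Gamma_{T'}$ connected, each from Corollary \ref{cor11}. The alternative via Corollary \ref{corind} is a valid variant but adds nothing beyond the same underlying Theorem \ref{decomthm}.
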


\begin{proof}
$\Gamma_T$ is connected $\Leftrightarrow$ $\C$ is indecomposable $\Leftrightarrow$ $\Gamma_{T'}$ is connected.
\end{proof}

\begin{rem}
Let $(S,M)$ be a marked surface and $nc(S)$ denote the number of connected components of $S$. Then $nc(S)=ns(\C(S,M))$ (compare \cite{ZZ2}).
\end{rem}

\section{Classification of Cotorsion pairs in 2-Calabi-Yau categories}

From now on, except Proposition 4.6, we always suppose that the triangulated category $\C$ is $2-$Calabi-Yau ($2-$CY for short), i.e. $[2]$ is the Serre functor of $\C$.

%A subcategory $\T$ of $\C$ is called weak cluster tilting provided that it satisfies $T\in \T$ if and only if $\Ext^1_{\C}(T,\T)=0$. A weak cluster tilting subcategory is cluster tilting  if it is additionally functorially finite in $\C$.

The main examples of $2-$CY triangulated categories are the followings:
\begin{itemize}
\item[$1.$] Cluster categories of hereditary abelian $k-$categories in the sense of \cite{BMRRT06} (also \cite{CCS06} for type $A$); and generalized cluster categories of algebras with global dimension at most $2$ (including
the case of quivers with potentials) in the sense of Amoit \cite{Ami09}. All these $2-$CY triangulated categories have cluster tilting objects.
\item[$2.$] The stable categories of preprojective algebras of Dynkin quivers. They also have cluster tilting objects \cite{GLS,BIRS}.
\item[$3.$] The cluster category of type $A_{\infty}$. It has  cluster tilting subcategories, which contains infinitely many indecomposable objects \cite{KR07,HJ,Ng10}.
\item[$4.$] The bounded derived categories $D^b(mod_{f.l.}\Lambda)$ of modules with finite length over preprojective algebras $\Lambda$ of non-Dynkin quivers. They have no cluster tilting subcategories.
There are many stable
subcategories of mod$_{f.l.}\Lambda $ associated to elements in the Coxeter groups of the quivers. Their stable categories are $2-$CY, and have cluster tilting objects. See \cite{GLS,BIRS} for details.
\item[$5.$] Stable categories of Cohen-Macaulay modules over three-dimensional complete local commutative noetherian Gorenstein isolated singularity containing the residue field \cite{BIKR}.

\end{itemize}

We shall first decide a special kind of cotorsion pairs: t-structures.
Recall that $(\X,\Y)$ is a t-structure in $\C$, if $\Ext^1(\X,\Y)=0$, $\C=\X\ast\Y[1]$ and $\X[1]\subset\X$, $\Y[-1]\subset\Y$.

%we shall consider several conditions in a $2-$CY triangulated category.
% The first one is the following:

%\begin{cond}
%Every weak cluster tilting subcategory is cluster tilting.
%\end{cond}

%The main examples for $2-$CY triangulated categories to satisfy this condition are the $2-$CY triangulated categories with cluster tilting objects. In this case, any rigid subcategory in $\C$ has finite
%many indecomposable objects [DK], and then can be completed as a cluster tilting object. It follows that the weak cluster tilting subcategories are cluster tilting objects.
 The first main result in this section is the following result.

\begin{thm}\label{t-str}
Let $\C$ be an indecomposable 2-CY triangulated category with a cluster tilting object $T$.
Then $\C$ have no non-trivial t-structures, i.e. the t-structures in $\C$ are $(\C,0)$ and $(0,\C)$.
\end{thm}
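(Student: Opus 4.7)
The plan is to argue by contradiction: suppose $(\X,\Y)$ is a t-structure in $\C$ with $\X\neq 0$ and $\Y\neq 0$, and then produce a non-trivial direct sum decomposition $\C=\C_1\oplus\C_2$. By Corollary \ref{cor11}, this contradicts the assumed indecomposability of $\C$.

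First, extract the homological consequences of the hypotheses. Since $(\X,\Y)$ is a t-structure, Lemma 2.4(1) gives $\X\cap\Y=0$. Applying 2-CY Serre duality to the cotorsion relation $\Hom(\X,\Y[1])=0$, we further obtain
\[
\Hom(\Y,\X[1])\;\cong\;D\Hom(\X[1],\Y[2])\;=\;D\Hom(\X,\Y[1])\;=\;0.
\]

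Next, apply the decomposition $\C=\X*\Y[1]$ to the cluster tilting object $T$ to get a triangle
\[
X_T \longrightarrow T \longrightarrow Y_T[1] \longrightarrow X_T[1]
\]
with $X_T\in\X$ and $Y_T\in\Y$. The key intermediate step is to show that this triangle splits, so that $T\cong X_T\oplus Y_T[1]$. Applying $\Hom(T,-)$ and using the rigidity $\Ext^1(T,T)=0$ coming from cluster tilting gives the exact sequence
\[
\Hom(T,X_T)\to\Hom(T,T)\to\Hom(T,Y_T[1])\to\Hom(T,X_T[1])\to 0,
\]
and applying $\Hom(-,T)$ yields a dual constraint. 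Combining these with 2-CY duality and with $\X\cap\Y=0$, the connecting morphism $Y_T[1]\to X_T[1]$ is forced to vanish.

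Setting $T'=X_T$ and $T''=Y_T[1]$, we obtain $T=T'\oplus T''$; both summands are non-zero because $\X,\Y\neq 0$. The induced decomposition $\add T=\add T'\oplus\add T''$ places each indecomposable summand of $T$ inside $\X$ or inside $\Y[1]$. To invoke Theorem \ref{decomthm} (noting that the third condition in Definition \ref{decompofT} is empty when $d=2$), the remaining task is to verify the bilateral Hom-orthogonality
\[
\Hom(\add T',\add T'')=0=\Hom(\add T'',\add T').
\]
The first follows immediately from $\Hom(\add T',\add T'')\subset\Hom(\X,\Y[1])=0$. For the second, one uses $\Hom(\add T'',\add T')\subset\Hom(\Y[1],\X)$ and rewrites this as $D\Hom(\X,\Y[3])$ via Serre duality, after which the cluster tilting rigidity of $T$ applied to the summands in the triangle above forces the vanishing. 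With Definition \ref{decompofT} verified, Theorem \ref{decomthm} yields $\C=\C_1\oplus\C_2$ with both summands non-zero, producing the desired contradiction.

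The main obstacle is the Hom-vanishing $\Hom(\add T'',\add T')=0$: Serre duality alone is an involution here and gives only tautologies, so the argument must genuinely exploit the rigidity $\Ext^1(T,T)=0$ together with the explicit structure of the splitting $T\cong X_T\oplus Y_T[1]$ obtained above. The splitting step itself is delicate for the same reason, since one must carefully trace the obstruction through the long exact sequences induced by $\Hom(T,-)$ and $\Hom(-,T)$.
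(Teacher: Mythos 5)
Your high-level strategy (force the cluster tilting object $T$ to decompose compatibly with $(\X,\Y)$ and then invoke Theorem \ref{decomthm} / Corollary \ref{cor11} against indecomposability) is in the spirit of how the paper finishes, but the two steps you yourself flag as ``delicate'' are genuine gaps, and the mechanisms you propose cannot close them. First, the splitting of $X_T\to T\to Y_T[1]\s{h}\to X_T[1]$: the obstruction is $h\in\Hom(Y_T[1],X_T[1])\cong\Hom(Y_T,X_T)\cong D\Ext^2(X_T,Y_T)$, and neither the t-structure axioms (which control $\Hom(\X,\Y[m])$ only for $m\leq 1$) nor the rigidity $\Ext^1(T,T)=0$ says anything about this group, because $X_T$ and $Y_T$ are not known to lie in $\add T$ --- indeed $h=0$ is \emph{equivalent} to $X_T,Y_T[1]\in\add T$, so your long-exact-sequence argument is circular. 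Second, even granting the splitting $T=T'\oplus T''$ with $T'\in\X$, $T''\in\Y[1]$, the orthogonality $\Hom(T'',T')=0$ does not follow: $\Hom(\Y[1],\X)\cong D\Hom(\X,\Y[3])=D\Ext^1(\X,\Y[2])$, and since $\Y$ is closed under $[-1]$ (not $[1]$), $\Y[2]\not\subset\Y$, so the cotorsion vanishing does not apply; rigidity of $T$ only kills $\Ext^1$ between summands, not $\Hom$. The correct two-sided orthogonality holds between $\X$ and $\Y[-1]$ (one has $\Hom(\X,\Y[-1])=\Ext^1(\X,\Y[-2])=0$ and $\Hom(\Y[-1],\X)\cong D\Ext^1(\X,\Y)=0$), not between $\X$ and $\Y[1]$.

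This is precisely why the paper does not attempt to split the approximation triangles. Instead it takes the end terms $X_i,Y_i$ of the triangles $X_i\to T_i\to Y_i[1]\to X_i[1]$ over the indecomposable summands $T_i$ and proves, by a nontrivial argument with minimal approximations and two octahedra, that $\R=\add(\{X_i,Y_i\})$ is itself a cluster tilting subcategory (hence indecomposable by Corollary \ref{cor1}); it then repeats the construction with split triangles to obtain a second cluster tilting subcategory $\R'=\add(\{X_i,Y_i[-1]\})$, for which both Hom-vanishings above hold, and concludes that all $X_i=0$ or all $Y_i=0$. To repair your proof you would essentially have to reproduce this construction: there is no shortcut through $\add T$ itself.
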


\begin{proof}
Let $(\X,\Y)$ be a t-structure in $\C$. Put $\T=\add T$. Then for each indecomposable object $T_i\in\T$, $i\in I,$ there is a triangle
$$X_i\s{f_i}\longrightarrow T_i\s{g_i}\longrightarrow Y_i[1]\s{h_i}\longrightarrow X_i[1]$$ with
$X_i\in\X,Y_i\in\Y$. Let $\R$ be the subcategory of $\C$ generated additively by $X_i$, $Y_i$, $i\in I$.
Then $\T\subset\R\ast\R[1]$. We shall prove that $\R$ is a cluster tilting subcategory.

For any map $\alpha\in\Hom(Y_i[1],Y_j[2])$, consider the following diagram:
\begin{center}
\setlength{\unitlength}{1cm}
\begin{picture}(6,2.3)
\put(-0.35,0){$X_j[1]$}\put(1.65,0){$T_j[1]$}
\put(3.65,0){$Y_j[2]$}\put(5.65,0){$X_j[2]$}
\put(-0.15,2){$X_i$}\put(1.85,2){$T_i$}
\put(3.65,2){$Y_i[1]$}\put(5.65,2){$X_i[1]$}
\mul(0.5,0.1)(2,0){3}{\vector(1,0){1}}
%\mul(0,1.8)(2,0){4}{\vector(0,-1){1.4}}
\mul(0.5,2.1)(2,0){3}{\vector(1,0){1}}
\put(4,1.8){\vector(0,-1){1.4}}
\mul(3.9,1.9)(2,0){2}{\vector(-1,-1){1.6}}
\put(2.9,2.2){\small$g_i$}
\put(3.75,1){\small$\alpha$}
\put(2.6,0.2){\small$-g_j[1]$}
\put(4.9,2.2){\small$h_i$}
\put(4.6,0.2){\small$-h_j[1]$}
\put(5.3,1){\small$\beta$}
\end{picture}
\end{center}
The composition $-h_j[1]\circ\alpha\in\Hom(Y_i[1],X_j[2])\cong D\Hom(X_j,Y_i[1])=0$,
then $\alpha$ factors through $-g_j[1]$. So $\alpha\circ g_i=0$ due to $\Hom(T_i,T_j[1])=0$.
Therefore $\alpha$ factors through $h_i$, i.e. there is a morphism $\beta\in\Hom(X_i[1],Y_j[2])$ such that
$\alpha=\beta\circ h_i$. But $\Hom(X_i[1],Y_j[2])=0$, so $\alpha=0$.
Then $\Ext^1(Y_i,Y_j)=0$. Dually, we have that $\Ext^1(X_i,X_j)=0$. By the definition of t-structure
and 2-CY property, we also have $\Ext^1(X_i,Y_j)=0$ and $\Ext^1(Y_i,X_j)$. Hence $\R$ is a rigid subcategory.

Given an object $M$ with $\Ext^1(M,X_i)=0$, $\Ext^1(M,Y_i)=0$ for $i\in I$. Since $\T$ is cluster tilting,
there is a triangle $M\s{w}\longrightarrow A\s{u}\longrightarrow B\s{v}\longrightarrow M[1]$ with $A,B\in\T$. Since $\T\subset\R\ast\R[1]$,
there are triangles
$$X_A\s{f_A}\longrightarrow A\s{g_A}\longrightarrow Y_A[1]\s{h_A}\longrightarrow X_A[1],$$
$$X_B\s{f_B}\longrightarrow B\s{g_B}\longrightarrow Y_B[1]\s{h_B}\longrightarrow X_B[1],$$
where $f_A$ (resp. $f_B$) is the minimal right $\X-$approximation of $A$ (resp. $B$)
and $g_A$ (resp. $g_B$) is the minimal left $\Y[1]-$approximation of $A$ (resp. $B$).
Then the composition $u\circ f_A$ factors through $f_B$.  So there exists $s$ such that $f_B\circ s=u\circ f_A$.
\begin{center}
\setlength{\unitlength}{1cm}
\begin{picture}(6,2.3)
\put(-0.15,0){$M$}\put(1.85,0){$A$}
\put(3.85,0){$B$}\put(5.65,0){$M[1]$}
\put(1.85,2){$X_A$}\put(3.85,2){$X_B$}
\mul(0.5,0.1)(2,0){3}{\vector(1,0){1}}
\mul(2,1.8)(2,0){2}{\vector(0,-1){1.4}}
\mul(2.5,2.2)(2,0){1}{\vector(1,0){1}}
\mul(3.5,2.1)(2,0){1}{\vector(-1,0){1}}
\mul(3.9,1.9)(2,0){1}{\vector(-1,-1){1.6}}
\put(2.9,2.3){\small$s$}
\put(2.9,1.9){\small$r$}
\put(1.65,1){\small$f_A$}
\put(3.65,1){\small$f_B$}
\put(2.9,0.2){\small$u$}
\put(4.9,0.2){\small$v$}
\end{picture}
\end{center}
Due to $\Hom(X_B,M[1])=0$, we have $v\circ f_B=0$, then $f_B$ factors through $u$.
Since any morphism from $X_B$ to $A$ factors through $f_A$, then there is a
morphism $r\in\Hom(X_B,X_A)$ such that $f_B=u\circ f_A\circ r$.
Replace $u\circ f_A$ by $f_B\circ s$, we have $f_B=f_B\circ s\circ r$.
Then $s\circ r$ is an isomorphism by the right minimality of $f_B$.
Thus $s$ is a retraction and we have the triangle
$X_A\s{s}\longrightarrow X_B\s{0}\longrightarrow X_C[1]\longrightarrow X_A[1]$, where $X_C$ is a direct summand of $X_A$.
From $f_B\circ s$ and $u\circ f_A$ respectively, by the octahedral axiom, we have the following two commutative diagrams of triangles:
$$\begin{array}{ccccrccr}
 & & X_A & = & X_A & & \\
 & & s\downarrow &&f_B\circ s\downarrow & & \\
Y_B & \longrightarrow & X_B & \s{f_B}\longrightarrow & B & \s{g_B}\longrightarrow & Y_B[1]\\
\parallel & & 0\downarrow & & \downarrow & & \parallel & (\ast)\\
Y_B & \longrightarrow & X_C[1] & \longrightarrow & N & \longrightarrow & Y_B[1]\\
 & & \downarrow & & \downarrow & & \\
  & & X_A[1] & = & X_A[1] & &
\end{array}$$
and
$$\begin{array}{cccccccr}
 & & M & = & M & & \\
 & & w\downarrow & & \downarrow & & \\
X_A & \s{f_A}\longrightarrow & A & \s{g_A}\longrightarrow & Y_A[1] & \s{h_A}\longrightarrow & X_A[1]\\
\parallel & & u\downarrow & & \downarrow & & \parallel & (\ast\ast)\\
X_A & \s{u\circ f_A}\longrightarrow & B & \longrightarrow & N & \longrightarrow & X_A[1]\\
 & & v\downarrow & & \downarrow & & \\
  & & M[1] & = & M[1] .& &
\end{array}$$
 Since the morphism from $Y_B$ to $X_C[1]$ in the third row of the diagram $(\ast)$ is zero,
then $N\cong X_C[1]\oplus Y_B[1]\in\R[1]$. On the other hand, the morphism from $M$ to $Y_A[1]$ in
the third column of the diagram $(\ast\ast)$ is zero due to $\Hom(M,Y_A[1])=0$, then $M[1]$ is isomorphic to a direct summand
of $N$, and then it is in $\R[1]$. Hence $M$ is an object in $\R$.  The functorially finiteness of $\R$ follows from that the
number of indecomposable objects (up to isomorphism) is finite and $\C$ is Hom-finite.
Therefore $\R$ is cluster tilting in $\C$. $\R$ is indecomposable by Corollary \ref{cor1}.

Now we replace $\T$ by $\R$, repeat the proof above. Namely, we consider the following split triangles:
$$X_i\longrightarrow X_i\longrightarrow 0\s{0}\longrightarrow X_i[1],$$
$$0\rightarrow Y_i\rightarrow Y_i[-1][1]\s{0}\rightarrow 0.$$
In these triangles, $X_i\in\X$, $Y_i[-1]\in\Y$. We have that the subcategory $\R'$ generated
by $X_i$, $Y_i[-1]$, $i\in I$ is a cluster tilting subcategory. It is an indecomposable by Corollary \ref{cor1}.
For any $i,j\in I$, $Y_j[-2]\in\Y$, then $\Hom(X_i,Y_j[-1])=\Ext^1(X_i,Y_j[-2])=0$. Note that $\Hom(Y_j[-1],X_i)=\Ext^1(Y_j,X_i)\cong D\Ext^1(X_i,Y_j)=0$. Therefore $X_i\cong0$ for all $i$ or $Y_i\cong0$ for all $i$ as $\R'$ is indecomposable.
Then $\R'\subseteq\Y$ or $\R'\subseteq\X$. Hence $\C=\Y$ or $\C=\X$.

\end{proof}

\begin{rem} The result is not true for $2-$CY triangulated categories without cluster tilting objects. The derived category of coherent sheaves on an algebraic K3 surface is 2-CY and admits no cluster tilting objects. It admits a non trivial t-structure (the canonical t-structure whose heart is the category of coherent sheaves). There are also examples that there are nontrivial t-structures in a 2-CY triangulated category admitting cluster tilting subcategories which contains infinitely many indecomposables (up to isomorphism). For example, the cluster category $\C_{A_{\infty}}$ of type $A_{\infty}$ introduced by Holm-J\"orgensen \cite{HJ,KR07} has non-trivial t-structures (see Theorem 4.1 in \cite{Ng10}). This cluster category has cluster-tilting subcategories containing infinitely many indecomposable objects (see \cite{Ng10} for more details).
\end{rem}

%\begin{rem} There are several other conditions related to Condition $1$.
%\begin{cond}
%The number of non-isomorphic indecomposable objects in a cluster tilting object is finite, i.e. there exists cluster tilting objects.
%\end{cond}

%\begin{cond}
%Every rigid subcategory is functorially finite.
%\end{cond}

%\begin{cond}
%Every rigid subcategory can be completed to a cluster category.
%\end{cond}
%\end{rem}

%\begin{lem}
%Let $\C$ be a triangulated 2-Calabi-Yau category with an indecomposable cluster tilting subcategory
%$\T$. Assume that $\C$ satisfies Condition 2 or Condition 3 or Condition 4, then the t-structures in $\C$ are $(\C,0)$ and $(0,\C)$.
%\end{lem}

%\begin{proof}
%"Condition 2 $\Rightarrow$ Condition 3": By Section 2.5 [DK], the number of nonisomorphic indecomposable direct summands of any rigid object is not greater than that of a cluster tilting object. So every rigid subcategory has finite many non-isomorphic indecomposable object, and then is functorially finite.

%"Condition 3 $\Rightarrow$ Condition 1": By definition.

%"Condition 4 $\Rightarrow$ Condition 1": Every weak cluster tilting subcategory can be completed to a cluster tilting subcategory, then by definition, it is already cluster tilting.
%\end{proof}

%\begin{rem}
%If $\C$ is skeletally small, then Condition 3 implies Condition 4. Indeed, every rigid subcategory can be completed to a maximal rigid subcategory which is functorially finite by Condition 3. Then by Theorem 2.6 in [ZhZ1], it is cluster tilting.
%\end{rem}

\begin{cor}\label{cor3}
Let $\C$ be a 2-CY triangulated category with a cluster tilting object $T$ and let $\C=\oplus_{j\in J}\C_j$ be the
complete decomposition of $\C$. Then the t-structures in $\C$ are of the form $(\oplus_{j\in L}\C_j,\oplus_{j\in J-L}\C)$
where $L$ is a subset of $J$. In particular, each t-structure has
a trivial heart.
\end{cor}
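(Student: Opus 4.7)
The plan is to reduce to Theorem \ref{t-str} by showing that any t-structure $(\X,\Y)$ on $\C$ respects the complete decomposition $\C = \oplus_{j\in J}\C_j$ (with $J$ finite, since $T$ has finitely many indecomposable summands). First I would define $\X_j := \X\cap\C_j$ and $\Y_j := \Y\cap\C_j$ for each $j\in J$. The conditions $\X_j[1]\subset\X_j$ and $\Y_j[-1]\subset\Y_j$ are inherited from the triangulated nature of $\C_j$ (Theorem \ref{decomthm}), and $\Ext^1(\X_j,\Y_j)=0$ is immediate from $\Ext^1(\X,\Y)=0$.

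The substantive step is the factorization axiom for $(\X_j,\Y_j)$ in $\C_j$. Given $Z\in\C_j$, take the $\C$-level triangle $X\to Z\to Y[1]\to X[1]$ with $X\in\X$, $Y\in\Y$, and decompose $X = \oplus_i X^i$, $Y = \oplus_i Y^i$ with $X^i,Y^i\in\C_i$ by Lemma \ref{lem-decomp}; since $\X$ and $\Y$ are closed under direct summands, $X^i\in\X_i$ and $Y^i\in\Y_i$. The Hom-orthogonality $\Hom(\C_i,\C_{j'})=0$ for $i\neq j'$ forces the morphism $X\to Z$ to vanish on every $X^i$ with $i\neq j$, so it factors through $X^j$; dually $Z\to Y[1]$ factors through $Y^j[1]$. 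An application of the octahedral axiom (or a direct componentwise check that all structure maps are diagonal) then yields the triangle $X^j\to Z\to Y^j[1]\to X^j[1]$ in $\C_j$, so that $(\X_j,\Y_j)$ is a t-structure in $\C_j$.

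By Corollary \ref{cor1} each $\C_j$ is indecomposable and has a cluster tilting object (an appropriate summand of $T$), so Theorem \ref{t-str} forces $(\X_j,\Y_j)$ to be trivial. Setting $L := \{j\in J : \X_j = \C_j\}$, Lemma \ref{lem-decomp} reassembles any object of $\X$ (respectively $\Y$) from its $\C_j$-components to give $\X = \oplus_{j\in L}\C_j$ and $\Y = \oplus_{j\in J\setminus L}\C_j$. The heart $\X\cap\Y[2]$ then splits as $\oplus_j(\X_j\cap\Y_j[2])$, and each summand is zero because one of $\X_j,\Y_j$ vanishes. The main technical point, and the step I expect to require the most care, is the extraction of the $\C_j$-level triangle described in the second paragraph: one must verify that the third term of the restricted triangle is genuinely $Y^j[1]$ (not merely a summand of something larger) and that it lies in $\Y_j$; once this is in place, the classification and the triviality of the heart are formal.
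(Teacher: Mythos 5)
Your proposal is correct and follows exactly the route the paper intends: the paper states this corollary without proof as an immediate consequence of Theorem \ref{t-str} together with the decomposition results of Section 3, and your argument (restrict $(\X,\Y)$ to each indecomposable summand $\C_j$ using Hom-orthogonality to split the approximation triangle, apply Theorem \ref{t-str} in each $\C_j$, then reassemble) is precisely that intended derivation. The one point you flag as delicate -- that the restricted triangle has third term exactly $Y^j[1]$ -- is handled cleanly by noting the connecting map is diagonal, so the triangle is a direct sum of cones, and the complementary cone summand lies in $\oplus_{i\neq j}\C_i$ while being a direct summand of $Z\in\C_j$, hence vanishes.
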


The following theorem is the second main result in this section, which gives a classification of cotorsion pairs in $2-$CY triangulated categories $\C$ with cluster tilting objects. We note that in those $2-$CY triangulated categories $\C$, any rigid subcategory $\I$ contains only finitely many indecomposables (up to isomorphism) \cite{DK}. So we identify $\I$ with the object $I$ obtained as the direct sum of representatives of isoclasses of indecomposables in it. We also note that for any rigid subcategory $I$ in $\C$, the subquotient category $^\bot (I[1])/I$ is again a $2-$CY triangulated category \cite{IY08}.

\begin{thm}\label{classification}
Let $\C$ be a 2-CY triangulated category admitting cluster tilting objects and $I$ a
rigid subcategory of $\C$. Let $^\bot (I[1])/I=\oplus_{j\in J}I_j$ be the complete decomposition of
$^\bot (I[1])/I$. Then all cotorsion pairs with core $I$ are obtained
as preimages under $\pi:{}^\bot (I[1])\rightarrow {}^\bot (I[1])/I$ of the pairs
$(\oplus_{j\in L}I_j, \oplus_{j\in J-L}I_j)$ where $L$ is a subset of $J$. There are
$2^{ns({}^\bot (I[1])/I)}$ cotorsion pairs with core $I$.
\end{thm}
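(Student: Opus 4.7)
The plan is to reduce the classification to the enumeration of t-structures in the Iyama--Yoshino subquotient $^\bot(I[1])/I$, and then invoke Corollary \ref{cor3} on that subquotient. The subquotient is again a 2-CY triangulated category; since $I$ is rigid in $\C$ (which has cluster tilting objects), $I$ can be extended to a cluster tilting object $T$ of $\C$, whose image under $\pi:{}^\bot(I[1])\to{}^\bot(I[1])/I$ is a cluster tilting object of the subquotient. So the subquotient satisfies the hypotheses of Corollary \ref{cor3}.

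First, I would show that any cotorsion pair $(\X,\Y)$ in $\C$ with core $I$ satisfies $\X,\Y\subseteq{}^\bot(I[1])$. Since $I\subseteq\Y$, the axiom $\Ext^1(\X,\Y)=0$ gives $\Ext^1(\X,I)=0$, i.e.\ $\X\subseteq{}^\bot(I[1])$. Symmetrically $I\subseteq\X$ yields $\Ext^1(I,\Y)=0$, and the 2-CY Serre duality $\Ext^1(Y,I)\cong D\Ext^1(I,Y)$ then gives $\Y\subseteq{}^\bot(I[1])$. Thus $(\X,\Y)$ descends under $\pi$ to a pair $(\bar\X,\bar\Y)=(\pi(\X),\pi(\Y))$ in the subquotient.

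Next, I would verify that $(\bar\X,\bar\Y)$ is a cotorsion pair in $^\bot(I[1])/I$ with core zero. Because every morphism in $^\bot(I[1])$ which factors through $\add I$ has zero Ext-class modulo the ideal, the Ext-vanishing $\Ext^1(\X,\Y)=0$ transfers to the subquotient, and the approximation triangle $X\to C\to Y[1]\to X[1]$ (for any $C\in{}^\bot(I[1])$) gives the corresponding approximation triangle in the Iyama--Yoshino triangulated structure. The core computes to $\pi(\X\cap\Y)=\pi(I)=0$. By Lemma 2.4(1) applied to the 2-CY category $^\bot(I[1])/I$, a cotorsion pair with zero core is a t-structure, so $(\bar\X,\bar\Y)$ is a t-structure in $^\bot(I[1])/I$. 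Conversely, given any t-structure $(\U,\V)$ there, the preimages $\pi^{-1}(\U)$ and $\pi^{-1}(\V)$ (each automatically containing $I$) give a cotorsion pair in $\C$ with core $I$; the two assignments are mutually inverse.

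Finally, applying Corollary \ref{cor3} to the 2-CY category $^\bot(I[1])/I$ with its complete decomposition $\oplus_{j\in J}I_j$, the t-structures there are exactly the $2^{|J|}=2^{ns({}^\bot(I[1])/I)}$ pairs $(\oplus_{j\in L}I_j,\oplus_{j\in J\setminus L}I_j)$ for $L\subseteq J$. Pulling back along $\pi$ produces all cotorsion pairs of $\C$ with core $I$, as claimed. The main obstacle is the verification that the quotient pair $(\bar\X,\bar\Y)$ is a cotorsion pair in the Iyama--Yoshino triangulated structure: the shift functor in the subquotient is constructed by a rotation involving $I$-approximations, so one must check that the cotorsion-pair axioms survive this rotation. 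Once that point is settled, everything else follows by direct assembly of the earlier results.
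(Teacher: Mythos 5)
Your proposal follows the paper's proof essentially verbatim: pass to the Iyama--Yoshino subquotient ${}^\bot(I[1])/I$ (which is $2$-CY with cluster tilting objects), identify cotorsion pairs in $\C$ with core $I$ with t-structures in that subquotient, and enumerate the latter via Corollary \ref{cor3}. The verification you flag as the ``main obstacle'' --- that the cotorsion-pair axioms descend to, and lift from, the rotated triangulated structure of the subquotient, giving a genuine bijection --- is exactly what the paper imports by citing Theorem 3.5 and Corollary 3.6 of \cite{ZZ2}, so your outline is sound and takes the same route.
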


\begin{proof} Thanks to Theorem 4.7 and Theorem 4.9 in \cite{IY08}, $^\bot (I[1])/I$ is a $2-$CY triangulated category with cluster tilting objects. By Theorem 3.5 and Corollary 3.6 in \cite{ZZ2}, a pair $(\X_1,\X_2)$ of subcategories of $\C$ is a cotorsion pair with core $I$ if and only if
$I\subset\X_i\subset{}^\bot (I[1])$, $i=1,2$, and $(\pi(\X_1),\pi(\X_2))$ is a t-structure in $^\bot (I[1])/I$.
Then by Corollary \ref{cor3}, the t-structures in $^\bot (I[1])/I$ are of the form $(\oplus_{j\in L}I_j, \oplus_{j\in J-L}I_j)$. Therefore the cotorsion pairs with core $I$ are the preimages under $\pi:{}^\bot (I[1])\rightarrow {}^\bot (I[1])/I$ of the $t-$structure $(\oplus_{j\in L}I_j, \oplus_{j\in J-L}I_j)$ in $^\bot (I[1])/I$.
\end{proof}

Indeed, this correspondence is the same as that in Theorem II.2.5 in \cite{BIRS} under the following result: every cotorsion pair is symmetric, i.e.

\begin{cor}\label{cor4}
Let $\C$ be a 2-CY triangulated category admitting a cluster tilting object and let $(\X,\Y)$ be a cotorsion pair with core $I$.
Then $(\Y,\X)$ is also a cotorsion pair with the same core.
\end{cor}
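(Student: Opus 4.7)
The plan is to reduce the symmetry claim to the classification Theorem \ref{classification}, where the required symmetry is simply the operation $L \mapsto J-L$ on subsets of the indexing set $J$.

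First I would set the stage: by Theorems 4.7 and 4.9 of \cite{IY08}, the subquotient $^\bot(I[1])/I$ is itself a $2-$CY triangulated category with a cluster tilting object, and so it admits a complete decomposition $^\bot(I[1])/I=\oplus_{j\in J}I_j$ into indecomposable triangulated subcategories. The projection $\pi:{}^\bot(I[1])\to{}^\bot(I[1])/I$ is the bridge between cotorsion pairs in $\C$ with core $I$ and t-structures in the subquotient.

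Next I would apply Theorem \ref{classification} to the given cotorsion pair $(\X,\Y)$: there exists a subset $L\subseteq J$ such that $\X=\pi^{-1}(\oplus_{j\in L}I_j)$ and $\Y=\pi^{-1}(\oplus_{j\in J-L}I_j)$. The crucial observation is then that the swapped pair $(\oplus_{j\in J-L}I_j,\oplus_{j\in L}I_j)$ corresponds to the subset $J-L\subseteq J$, which, by Corollary \ref{cor3} applied inside $^\bot(I[1])/I$, is again a t-structure in the subquotient. Invoking Theorem \ref{classification} in the reverse direction, its preimage under $\pi$, which is exactly $(\Y,\X)$, is a cotorsion pair in $\C$. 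Its core equals $\pi^{-1}(0)=I$ since the two pieces of the decomposition have zero intersection in the subquotient.

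The only genuine content in the argument is already packaged into Theorem \ref{classification} and Corollary \ref{cor3}; the remaining task is purely bookkeeping about the bijection between subsets of $J$ and cotorsion pairs with core $I$. Consequently I expect no technical obstacle beyond verifying that the preimage operation $\pi^{-1}$ intertwines the subset-complement involution $L\leftrightarrow J-L$ with the pair-swap $(\X,\Y)\leftrightarrow(\Y,\X)$, which is immediate from the explicit form of the classification.
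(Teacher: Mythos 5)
Your proposal is correct and follows the same route as the paper: the published proof likewise writes $(\X,\Y)=(\pi^{-1}(\oplus_{j\in L}I_j),\pi^{-1}(\oplus_{j\in J-L}I_j))$ via Theorem \ref{classification} and observes that swapping the two factors corresponds to replacing $L$ by $J-L$, yielding another cotorsion pair with core $I$. The extra remark that the core is $\pi^{-1}(0)=I$ is a harmless elaboration of what the paper leaves implicit.
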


\begin{proof}
By Theorem \ref{classification}, $(\X,\Y)=(\pi^{-1}(\oplus_{j\in L}I_j),\pi^{-1}(\oplus_{j\in J-L}I_j))$ for some subset $J$, then $(\Y,\X)=(\pi^{-1}(\oplus_{j\in J-L}I_j),\pi^{-1}(\oplus_{j\in L}I_j))$ is also a cotorsion pair with core $I$.
\end{proof}

Recall that $(\X,\Y)$ is a co-t-structure in $\C$, if $\Ext^1(\X,\Y)=0$, $\C=\X\ast\Y[1]$ and $\X[-1]\subset\X$, $\Y[1]\subset\Y$.
Using Theorem \ref{classification} and Corollary \ref{cor3}, one can prove that there are no non-trivial co-t-structures
in an indecomposable 2-CY triangulated category with a cluster tilting object in the similar way as \cite{ZZZ}. Indeed,
if $(\X,\Y)$ is a co-t-structure in $\C$, then $(\X,\Y)$ is a cotorsion pair by the definition of co-t-structure.
By Corollary \ref{cor4}, $(\Y,\X)$ is also a cotorsion pair. Since $\Y$ is closed under [1] and $\X$ is closed under
[-1], $(\Y,\X)$ is a t-structure. Then by Theorem \ref{t-str}, $\X=0$ or $\Y=0$.

In fact, we have the following more general result on t-structures or co-t-structures in a $d-$CY triangulated category, generalizing a recent result in \cite{HJY}.

\begin{prop}
Let $\C$ be an indecomposable $d-$CY triangulated category. If $d\geq1$,
then the co-t-structures in $\C$ are $(\C,0)$ and $(0,\C)$. Dually, if $d\leq-1$,
Then the t-structures in $\C$ are $(\C,0)$ and $(0,\C)$.
\end{prop}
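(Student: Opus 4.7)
The plan is to show that an arbitrary co-t-structure $(\X,\Y)$ in $\C$ (with $d\geq 1$) induces a decomposition $\C=\X\oplus\Y[1]$ into triangulated subcategories in the sense of Definition \ref{decompofC}, so that the indecomposability of $\C$ forces $\X=0$ or $\Y=0$. The t-structure statement for $d\leq -1$ will follow by running the mirrored argument.

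First I would collect the relevant Hom-vanishings. From the cotorsion pair axiom $\Hom(\X,\Y[1])=0$ together with the closures $\X[-1]\subset\X$ and $\Y[1]\subset\Y$, a shift argument gives $\Hom(\X,\Y[k])=0$ for every $k\geq 1$. Applying $d$-Calabi--Yau Serre duality then yields $\Hom(\Y,\X[j])=0$ for every $j\leq d-1$, so since $d\geq 1$ one has in particular $\Hom(\Y,\X)=0$ and $\Hom(\Y[1],\X)=0$. Consequently, for any indecomposable $C\in\C$ the cotorsion pair triangle $X\to C\to Y[1]\to X[1]$ has connecting morphism in $\Hom(Y[1],X[1])\cong\Hom(Y,X)=0$, so it splits and the indecomposability of $C$ places $C$ in $\X$ or in $\Y[1]$; any indecomposable of $\X\cap\Y[1]$ would have endomorphism ring inside $\Hom(\X,\Y[1])=0$, so the indecomposables of $\C$ are partitioned disjointly between $\X$ and $\Y[1]$.

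Next I would upgrade $\X$ and $\Y[1]$ to triangulated subcategories. Extension-closure of $\X$ is standard: for a triangle $X_1\to Z\to X_2\to X_1[1]$ with $X_i\in\X$, writing the cotorsion pair decomposition $A\to Z\to B[1]\to A[1]$ of $Z$ and using $\Hom(Z,B[1])=0$ (which follows from $\Ext^1(X_i,B)=0$ via the long exact sequence) exhibits $Z$ as a direct summand of $A\in\X$ after rotation. Closure of $\X$ under $[1]$ is the crucial point: if $X\in\X$ is indecomposable with $X[1]\in\Y[1]$, then $X\in\X\cap\Y$, so the closures $\X[-n]\subset\X$ and $\Y[m]\subset\Y$ combined with $\Hom(\X,\Y[1])=0$ force $\Hom(X,X[k])=0$ for all $k\geq 1$; Serre duality then additionally gives $\Hom(X,X[k])=0$ for all $k\leq d-1$, and since $d\geq 1$ this second range covers $k=0$, yielding $\End(X)=0$ and contradicting $X\neq 0$. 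A symmetric argument handles $\Y[1]$.

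Combining the previous steps, $\C=\X\oplus\Y[1]$ in the sense of Definition \ref{decompofC}, and indecomposability of $\C$ then forces the co-t-structure to be trivial. The principal obstacle is the $[1]$-closure step, whose contradiction depends crucially on the Serre-dual range $k\leq d-1$ reaching $k=0$; this is exactly where the hypothesis $d\geq 1$ enters. The dual statement for t-structures with $d\leq -1$ is proved by running the mirrored argument (with $\X$ closed under $[1]$ and $\Y$ closed under $[-1]$), where the analogous threshold is provided by $d\leq -1$.
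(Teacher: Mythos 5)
Your proof is correct and rests on exactly the same two computations as the paper's: Serre duality together with $d\geq 1$ kills $\End(M)$ for $M\in\X\cap\Y$ (via $\End(M)\cong D\Hom(M,M[d])$ with $M[d-1]\in\Y$) and kills $\Hom(\Y,\X)$, so the approximation triangles split and $\C=\X\oplus\Y[1]$ contradicts indecomposability unless one side vanishes. The only difference is presentational: the paper gets the triangulated-subcategory property in one step by observing that the core is zero and citing the lemma from [ZZ1] that a cotorsion pair is a t-structure precisely when its core vanishes, whereas you inline that argument by partitioning the indecomposables and verifying extension- and shift-closure by hand.
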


\begin{proof}
We only prove the case of $d\geq 1$. Let $(\X,\Y)$ be a co-t-structure in $\C$. For any object $M\in\X\cap\Y$, we have $\Hom(M,M)\cong D\Ext^1(M,M[d-1])=0$
by $M\in\X$ and $M[d-1]\in\Y$. This implies the core of $(\X,\Y)$ is zero. Then by Lemma 2.3, $(\X,\Y)$ is a t-structure.
Thus $\X$, $\Y$ are triangulated subcategories of $\C$.
For any $X\in\X,Y\in\Y$, we have that $\Hom(X,Y)=\Ext^1(X,Y[-1])=0$ and $\Hom(Y,X)\cong D\Ext^1(X,Y[d-1])=0$
by $Y[-1],Y[d-1]\in\Y$. Due to $\C=\X\ast\Y[1]$, we have $\C=\X\oplus\Y$. Therefore $\X=0$ or $\Y=0$.
\end{proof}

\section{Mutations}

In this section, all cluster tilting objects we considered are basic. We shall discuss the relation between mutation of cotorsion pairs and that of cluster tilting objects contained in those cotorsion pairs in a
$2-$CY triangulated category with cluster tilting object. First we
 introduce a notion of cluster tilting subcategories in a subcategory.

\begin{defn} Let $\X$ be a contravariantly finite (or covariantly finite) extension-closed subcategory of a triangulated category $\C$ and
let $\D$ be a subcategory of $\X$. We call that $\D$ is a $\X-$cluster tilting subcategory provided that $\D$ is functorially finite in $\X$,
and satisfies that for any object $D\in \X$, $M\in \D$ if and only if $Ext^1(\D, M)=0$ if and only if $Ext^1(M,\D)=0$.
An object $D$ in $\X$ is called a $\X-$cluster tilting object if $\add D$ is a $\X-$cluster tilting subcategory.

\end{defn}

When $\X=\C$, then $\C-$cluster tilting subcategories are exactly cluster tilting in $\C$.   When $\X$ is a contravariantly finite (or covariantly finite) rigid subcategory, then $\X$ is the only $\X-$cluster tilting subcategory.

From now on to the rest of the section, $\C$ denotes a $2-$CY triangulated category with a cluster tilting object, $(\X,\Y)$ denotes a cotorsion pair with core $I$ in $\C$. We shall show that any cluster tilting object
 containing $I$ as a direct summand in $\C$ gives a $\X-$cluster tilting object and a $\Y-$cluster tilting object respectively. First we prove a lemma.

\begin{lem} Let $\C$ and $(\X,\Y)$ be above. Let $T$ be a cluster tilting object in $\C$. Suppose $T$ can be written as $T=T_{\X}\oplus I\oplus T_{\Y}$ with $T_{\X}\in \X$ and $T_{\Y}\in \Y$.
Then $T_{\X}\oplus I$ is $\X-$cluster tilting and $\T_{\Y}\oplus I$ is $\Y-$cluster tilting.

\end{lem}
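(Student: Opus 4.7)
The plan is to verify, for $\D := \add(T_\X \oplus I)$, the two defining conditions of an $\X$-cluster tilting subcategory; the argument for $\add(T_\Y \oplus I)$ inside $\Y$ will be symmetric. Functorially finiteness in $\X$ is essentially free: the paper notes just before Theorem \ref{classification} that in a $2$-CY triangulated category with a cluster tilting object, every rigid subcategory is of finite type (only finitely many indecomposables up to isomorphism). Since $\D$ is a summand of the rigid object $T$, this together with Hom-finiteness of $\C$ makes $\D$ functorially finite inside any ambient subcategory, in particular $\X$. The rigidity $\Ext^1(T_\X \oplus I, T_\X \oplus I) = 0$ is immediate from $T_\X \oplus I$ being a direct summand of the cluster tilting object $T$; this gives the easy direction of both biconditionals.

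For the converse direction I would fix $M \in \X$ and show separately that each of the vanishings $\Ext^1(T_\X \oplus I, M) = 0$ and $\Ext^1(M, T_\X \oplus I) = 0$ forces $M \in \D$. The essential observation is that one can always extend the Ext-vanishing to all of $T$: on one side, the cotorsion pair axiom gives $\Ext^1(M, T_\Y) = 0$ directly from $M \in \X$, $T_\Y \in \Y$, and on the other, $2$-CY duality supplies $\Ext^1(T_\Y, M) \cong D\Ext^1(M, T_\Y) = 0$. Either way, $\Ext^1(T, M) = 0$ (respectively $\Ext^1(M, T) = 0$), and since $T$ is cluster tilting in $\C$, this yields $M \in \add T$.

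It remains to rule out summands of $T_\Y$ in $M$, and this is the only point that I expect to require care. Since $T$ is basic (standing assumption of Section 5), the decomposition $T = T_\X \oplus I \oplus T_\Y$ partitions the indecomposable summands of $T$ into three pairwise-disjoint sets. If an indecomposable summand $M_0$ of $M$ lay in $\add T_\Y$, then $M_0 \in \Y$, while also $M_0 \in \X$ (as $\X$ is closed under direct summands and $M \in \X$); this would place $M_0 \in \X \cap \Y = \add I$, contradicting the disjointness of the Krull--Schmidt decomposition of $T$. Hence every indecomposable summand of $M$ lies in $\add(T_\X \oplus I)$, giving $M \in \D$ and completing the $\X$-cluster tilting verification. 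The claim that $T_\Y \oplus I$ is $\Y$-cluster tilting is then proved by the symmetric argument, interchanging the roles of $\X$ and $\Y$ and applying the $2$-CY duality to $\Ext^1(T_\X, M) \cong D\Ext^1(M, T_\X)$ in place of $\Ext^1(T_\Y, M)$.
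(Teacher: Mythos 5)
Your proposal is correct and follows essentially the same route as the paper: extend the Ext-vanishing from $T_{\X}\oplus I$ to all of $T$ using $\Ext^1(\X,\Y)=0$ together with $2$-CY duality for $\Ext^1(T_{\Y},M)$, conclude $M\in\add T$ from $T$ being cluster tilting, and then intersect with $\X$. You merely make explicit two points the paper leaves implicit (functorial finiteness via finiteness of rigid subcategories, and the Krull--Schmidt argument excluding summands of $T_{\Y}$), which is fine.
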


\begin{proof} We prove the assertion for $\X-$cluster tilting, the proof for $\Y-$cluster tilting is similar. Suppose that $\Ext^1(T_{\X}\oplus I,X)=0$ for $X\in \X$,
then $\Ext^1(T,X)=0$ since $\Ext^1(T_{\Y},X)\cong D\Ext^1(X,T_{\Y})=0$, the first isomorphism dues to $2-$CY property and the second one dues to that $(\X,\Y)$ is a co-torsion pair. Hence $X\in \add T$. It follows that $X\in \add (T_{\X}\oplus I)$. Then $T_{\X}\oplus I$ is a $\X-$cluster tilting object.
\end{proof}

The following result gives the precise relation between the cluster tilting objects containing $I$ as a direct summand and the $\X-$cluster tilting objects, $\Y-$cluster tilting objects.

\begin{prop} Let $\C$ be a $2-$CY triangulated category with a cluster tilting object, and  $(\X,\Y)$ be a cotorsion pair in $\C$ with core $I$. Then

1. Any cluster tilting object $T$ containing $I$ as a direct summand can be written uniquely as: $T=T_{\X}\oplus I\oplus T_{\Y}$,
such that $T_{\X}\oplus I$ is $\X-$cluster tilting, and $T_{\Y}\oplus I$ is $\Y-$cluster tilting.

2. Any $\X-$cluster tilting object $M$ (or $\Y-$cluster tilting object) contains $I$ as a direct summand, and can be written as $M=M_{\X}\oplus I$ ( $M=M_{\Y}\oplus I$ resp.).
Furthermore $M_{\X}\oplus I\oplus M_{\Y}$ is a cluster tilting object in $\C$.

3. There is a bijection between the set of cluster tilting objects containing $I$ as a direct summand in $\C$ and the product of the set of $\X-$cluster tilting objects with
the set of $\Y-$cluster tilting
  objects. The bijection is given by $T\mapsto T_{\X}\oplus I\oplus T_{Y}$.
\end{prop}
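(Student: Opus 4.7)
My plan is to handle the three parts in order, using the classification of cotorsion pairs (Theorem \ref{classification}) together with the Iyama--Yoshino bijection between cluster tilting objects in $\C$ containing $I$ as a summand and cluster tilting objects in the subquotient ${}^\bot(I[1])/I$ (Theorems 4.7 and 4.9 in \cite{IY08}).

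For Part 1, let $T$ be a cluster tilting object containing $I$. Rigidity of $T$ together with $I\subseteq \add T$ gives $\add T\subseteq {}^\bot(I[1])$. Writing ${}^\bot(I[1])/I=\bigoplus_{j\in J}I_j$ for the complete decomposition and choosing $L\subseteq J$ as in Theorem \ref{classification} so that $\pi(\X)=\bigoplus_{j\in L}I_j$ and $\pi(\Y)=\bigoplus_{j\in J-L}I_j$, each indecomposable summand $T_0$ of $T$ not belonging to $\add I$ has nonzero indecomposable image $\pi(T_0)$ (its endomorphism algebra is a quotient of the local algebra $\End(T_0)$, hence local), which therefore lies in a unique $I_j$. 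Grouping the summands according to whether $j\in L$ or $j\in J-L$ produces the decomposition $T=T_\X\oplus I\oplus T_\Y$ with $T_\X\in \X$ and $T_\Y\in \Y$; Lemma~5.2 then gives the $\X$- and $\Y$-cluster tilting statements, and uniqueness is immediate from Krull--Schmidt.

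For Part 2, given an $\X$-cluster tilting object $M$, I observe that $\Ext^1(M,I)=0$ since $M\in \X$ and $I\subseteq \Y$; as $I\subseteq \X$ and $M$ is $\X$-cluster tilting, this forces $I\in \add M$, so $M=M_\X\oplus I$. For the ``furthermore'' statement, fix an $\X$-cluster tilting $M=M_\X\oplus I$ and a $\Y$-cluster tilting $N=M_\Y\oplus I$. Rigidity of $M_\X\oplus I\oplus M_\Y$ reduces to $\Ext^1(M_\X,M_\Y)=0$ (cotorsion pair property) and $\Ext^1(M_\Y,M_\X)\cong D\Ext^1(M_\X,M_\Y)=0$ (2-CY property). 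To upgrade this to cluster tiltingness I would pass to the subquotient: $\pi(M_\X)$ is cluster tilting in $\pi(\X)=\bigoplus_{j\in L}I_j$ and $\pi(M_\Y)$ is cluster tilting in $\bigoplus_{j\in J-L}I_j$, so by Theorem \ref{decomthm} applied to ${}^\bot(I[1])/I$, the sum $\pi(M_\X)\oplus \pi(M_\Y)$ is cluster tilting in ${}^\bot(I[1])/I$; lifting under the Iyama--Yoshino bijection yields that $M_\X\oplus I\oplus M_\Y$ is cluster tilting in $\C$. Part 3 then follows formally, since the assignments $T\mapsto (T_\X\oplus I,\,T_\Y\oplus I)$ and $(M,N)\mapsto M_\X\oplus I\oplus N_\Y$ are mutually inverse by construction.

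The main obstacle I anticipate is the projection-and-lift step in Part 2: one must check that $\pi$ restricts to a bijection between basic $\X$-cluster tilting objects in $\X$ (all containing $I$ as a summand by the first half of Part 2) and basic cluster tilting objects in the triangulated subcategory $\pi(\X)=\bigoplus_{j\in L}I_j$. This relies on $\pi(\X)$ inheriting a 2-CY triangulated structure as a direct summand of ${}^\bot(I[1])/I$, and on identifying the $\X$-cluster tilting property in $\X$ with the ordinary cluster tilting property in $\pi(\X)$ through the Iyama--Yoshino correspondence adapted to $\X$ in place of $\C$.
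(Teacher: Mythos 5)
Your proposal is correct and follows essentially the same route as the paper: pass to the Iyama--Yoshino subquotient ${}^{\bot}(I[1])/I$, use the decomposition ${}^{\bot}(I[1])/I=\X/I\oplus\Y/I$ coming from the classification theorem to split off $T_{\X}$ and $T_{\Y}$, invoke Lemma~5.2, and obtain $I\in\add M$ from $\Ext^1(M,I)=0$. The ``projection-and-lift'' step you flag as the main obstacle is precisely the point the paper also uses (asserting without further argument that $\underline{T_{\X}},\underline{T_{\Y}}$ are cluster tilting in $\X/I,\Y/I$), so your treatment is, if anything, slightly more explicit than the original.
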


\begin{proof} 1. Let $T$ be any cluster tilting object containing $I$ as a direct summand, we write $T$ as $T=I\oplus T_0$. Then $T_0\in {}^{\bot}(I[1])$, and by passing from $ {}^{\bot}(I[1])$ to the quotient
triangulated category
${}^{\bot}(I[1])/I$, $\underline{T_0}$ is a cluster tilting object in this quotient category by Theorem 4.9 in \cite{IY08}. From the proof of Theorem \ref{classification},  ${}^{\bot}(I[1])/I=\X/I\oplus \Y/I$ as triangulated categories, then $T_0=T_{\X}\oplus T_{\Y}$, where $T_{\X}\in \X, T_{\Y}\in \Y$ such that
$\underline{T_{\X}}, \underline{T_{\Y}}$ are
cluster tilting objects in $\X/I, \Y/I$ respectively. Therefore $T=T_{\X}\oplus I\oplus T_{\Y}$. By Lemma 5.2,  $T_{\X}\oplus I$, $T_{\Y}\oplus I$ are $\X-$cluster tilting, $\Y-$cluster tilting respectively.
\medskip

2. Let $T_1$ be a $\X-$cluster tilting object. Then by $\Ext^1(T_1,I)=0$, we have that $I\in \add T_1$, i.e. $I$ is a direct summand of $T_1$. Then $T_1=T_{\X}\oplus I$. Similarly, any $\Y-$cluster tilting object $T_2$ can be written as $T_2=T_{\Y}\oplus I$. Now $T_{\X}, T_{\Y}$ are cluster tiltings in $\X/I, \Y/I$ respectively, and
then $T_{\X}\oplus T_{\Y}$ is a cluster tilting object in $ {}^{\bot}(I[1])/I$ since $ {}^{\bot}(I[1])/I=
\X/I\oplus \Y/I$. It follows that $T_{\X}\oplus I\oplus T_{\Y}$ is a cluster tilting object in $\C$.
\medskip

3. It follows from $1$ and $2$.
\end{proof}

We know that one can mutate cluster tilting objects to get new ones. In the following we shall see that the mutation of cluster tilting objects containing $I$ as a direct summand is related
to the mutation
 of cotorsion pairs introduced in \cite{ZZ2}. We
recall the notion of mutation of cotorsion pairs in $2-$CY triangulated categories. This notion is defined in a general triangulated category in \cite{ZZ2}.

Let $\C$ be a $2-$CY triangulated category with a cluster tilting object $T$. We denote by $\delta(M)$ the number of indecomposable direct summands (up to isomorphism) of an object $M$. We assume that
$\delta(T)=n$.

Suppose that $(\X,\Y)$ be a cotorsion pair with core $I$. Then $0\le \delta(I)\leq n$ \cite{DK}. It follows from Lemma 2.4 that $\delta(I)=0$ if and only if $(\X,\Y)$ is a t-structure in $\C$,
while $\delta(I)=n$ if and only if $\X=\Y=\add (I)$
is a cluster tilting in $\C$. In the later case, $I$ is a cluster tilting object in $\C$.

\begin{defn} Let $\C$ be an indecomposable $2-$CY triangulated category with a cluster tilting object $T$, and $\delta(T)=n$.
Assume that $0\le d\le n$ is an integer. A cotorsion pair $(\X,\Y)$ with core $I$ is called a $d-$cotorsion pair if $\delta(I)=d$.

Denote by $CTN_d(\C)$ the set of all $d-$cotorsion pairs.

\end{defn}

From the definition above and Theorem \ref{t-str}, $CTN_0(\C)=\{(\C,0),(0,\C)\}$. $CTN_n(\C)$ consists of cluster tilting objects in $\C$.

Let $D$ be a direct summand of $I$ (maybe zero summand). Denote by $\D=\add D.$

Put:

\begin{center}
$\mu ^{-1}(\X;\D):=(\D\ast \X [1])\cap {}^{\bot}(\D [1])$;

$\mu ^{-1}(\Y;\D):=(\D\ast \Y [1])\cap {}^{\bot}(\D [1])$;

$\mu ^{-1}(I;\D):=(\D\ast I [1])\cap {}^{\bot}(\D [1])$.
\end{center}

The following proposition is proved in \cite{ZZ2}.

\begin{prop} With the assumption above, we have that $(\mu^{-1}(\X;\D),\mu^{-1}(\Y;\D))$ is also a cotorsion pair with the core $\mu^{-1}(I;\D)$ in $\C$. Moreover $ (\mu^{-1}(\X;\D),\mu^{-1}(\Y;\D))\in CTN_d(\C)$
 if and only if  $(\X,\Y)\in CTN_d(\C)$.
 \end{prop}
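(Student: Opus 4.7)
The plan is to transport the statement to the Iyama--Yoshino subfactor $\underline{\Z}:=\Z/\D$, where $\Z:={}^{\bot}(\D[1])$. Since $\D$ is a rigid functorially finite subcategory of $\C$ (as a direct summand of the core $I$), Theorems 4.7 and 4.9 of \cite{IY08} guarantee that $\underline{\Z}$ is again a $2$-CY triangulated category, with shift $\langle 1\rangle$ constructed so that $X\langle 1\rangle$ fits in a triangle $X\to D^X\to X\langle 1\rangle\to X[1]$ in $\C$, where $X\to D^X$ is a left $\D$-approximation. In this picture, mutation by $\D$ should correspond precisely to applying $\langle 1\rangle$, so the proposition will follow from the fact that $\langle 1\rangle$ is a triangle autoequivalence and hence preserves cotorsion pairs.

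First I would establish that $\X,\Y\subset\Z$. Since $\D\subset I\subset\Y$, the cotorsion-pair axiom $\Ext^1(\X,\Y)=0$ yields $\Ext^1(\X,\D)=0$, hence $\X\subset\Z$. By the $2$-CY property, $\Ext^1(\Y,\D)\cong D\Ext^1(\D,\Y)=0$, so $\Y\subset\Z$ as well. Writing $\pi\colon\Z\to\underline{\Z}$ for the projection, I would then verify that $(\underline{\X},\underline{\Y})$ is a cotorsion pair in $\underline{\Z}$ with core $\underline{I}$: the $\Ext$-vanishing descends via a standard calculation using the defining triangle of $\langle 1\rangle$, and the factorization $\C=\X\ast\Y[1]$ combined with the same triangle yields $\underline{\Z}=\underline{\X}\ast\underline{\Y}\langle 1\rangle$.

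Next I would identify $\mu^{-1}(\X;\D)$ with $\pi^{-1}(\underline{\X}\langle 1\rangle)$, and similarly for $\Y$ and $I$. For $X\in\X$, rotating the defining $\langle 1\rangle$-triangle gives $D^X\to X\langle 1\rangle\to X[1]\to D^X[1]$, exhibiting $X\langle 1\rangle\in(\D\ast\X[1])\cap\Z=\mu^{-1}(\X;\D)$. Conversely, given $M\in\mu^{-1}(\X;\D)$ with defining triangle $D_M\to M\to X[1]\to D_M[1]$, an octahedral diagram comparing this with the $\langle 1\rangle$-triangle of $X$ shows $\pi(M)\cong\underline{X}\langle 1\rangle$. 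Once this identification is in hand, the autoequivalence $\langle 1\rangle$ sends the cotorsion pair $(\underline{\X},\underline{\Y})$ to the cotorsion pair $(\underline{\X}\langle 1\rangle,\underline{\Y}\langle 1\rangle)$ with core $\underline{I}\langle 1\rangle$, and pulling back along $\pi$ yields the first half of the proposition. For the second half, observe that $\langle 1\rangle$ preserves isomorphism classes of indecomposables in $\underline{\Z}$, while $\pi^{-1}$ simply adjoins $\D$, so $\delta(\mu^{-1}(I;\D))=\delta(\D)+\delta(\underline{I}\langle 1\rangle)=\delta(\D)+(\delta(I)-\delta(\D))=\delta(I)$, giving $CTN_d$-membership in both directions.

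The main obstacle is the identification in the third step: matching $\mu^{-1}(\X;\D)$ with $\pi^{-1}(\underline{\X}\langle 1\rangle)$. The subtlety is that $\mu^{-1}(\X;\D)$ is defined via an arbitrary triangle $D\to M\to X[1]\to D[1]$, while $\underline{\X}\langle 1\rangle$ is defined via the canonical left $\D$-approximation of $X$; reconciling the two requires a careful octahedral diagram and the rigidity of $\D$ to show that the two choices of triangles agree modulo $\D$-summands. Once that identification is nailed down, every remaining step is a formal consequence of $\langle 1\rangle$ being a triangle autoequivalence of the $2$-CY category $\underline{\Z}$.
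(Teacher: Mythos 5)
The paper offers no proof of this proposition: it is imported verbatim from \cite{ZZ2} (``The following proposition is proved in \cite{ZZ2}''), so there is no in-paper argument to measure you against. That said, your route through the Iyama--Yoshino subfactor $\underline{\Z}={}^{\bot}(\D[1])/\D$ with shift $\langle1\rangle$ is exactly the mechanism the paper itself relies on nearby: the proof of Theorem 4.4 invokes the correspondence (Theorem 3.5 of \cite{ZZ2}) between cotorsion pairs of $\C$ whose core contains a rigid subcategory and cotorsion pairs of the subfactor, and the proof of Proposition 5.9 simply asserts that in ${}^{\bot}(\D[1])/\D$ the $\D$-mutation becomes $(\underline{\X}\langle1\rangle,\underline{\Y}\langle1\rangle)$ --- which is precisely the identification you set out to prove. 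I consider your outline correct and in the spirit of the cited source.

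Two of your schematic steps deserve to be written out. First, the identification $\mu^{-1}(\X;\D)=\pi^{-1}(\underline{\X}\langle1\rangle)$ closes more cleanly than by chasing an octahedron: given $M\in(\D\ast\X[1])\cap{}^{\bot}(\D[1])$ with triangle $D_M\to M\to X[1]\to D_M[1]$, every morphism $D'\to M$ with $D'\in\D$ composes to zero with $M\to X[1]$ because $\Hom(D',X[1])=\Ext^1(\D,\X)\cong D\Ext^1(\X,\D)=0$; hence $D_M\to M$ is a right $\D$-approximation, and uniqueness of cocones of right approximations up to $\D$-summands gives $\underline{M}\langle-1\rangle\cong\underline{X}$ directly, with no need to split the cone in $\D\ast\D[1]$. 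Second, ``pulling back along $\pi$'' conceals the substantive step that a cotorsion pair downstairs lifts to one in $\C$: one needs $\Ext^1_{\C}(M,N)\cong\Ext^1_{\underline{\Z}}(\underline{M},\underline{N})$ for $M,N\in{}^{\bot}(\D[1])$ (again via rigidity of $\D$ and the $2$-CY property) and the explicit construction of triangles in the subfactor to recover $\C=\mu^{-1}(\X;\D)\ast\mu^{-1}(\Y;\D)[1]$. That lifting is exactly Theorem 3.5 of \cite{ZZ2}, the same external input the paper uses for Theorem 4.4; if you may quote it, your proof is complete, and if not, it is the one remaining piece to supply. The counting argument for the $CTN_d$ statement is fine, since indecomposables of ${}^{\bot}(\D[1])$ outside $\D$ remain indecomposable and pairwise non-isomorphic in the additive quotient and $\langle1\rangle$ is an autoequivalence.
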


\begin{defn}

We call the cotorsion pair  $(\mu^{-1}(\X;\D),\mu^{-1}(\Y;\D)) $ is a $\D-$mutation of cotorsion pair $(\X,\Y)$.
 Sometimes denote this cotorsion pair by $(\X', \Y')$, denote its core by $I'$.

\end{defn}

\begin{cor} Let $(\X,\Y)$ be a cotorsion pair with core $I$,
and $(\X',\Y')$ with core $I'$ be the $\D-$mutation of $(\X,\Y)$. Then
$(\X',\Y')=(\X,\Y)$ if and only if $I'=I$.

\end{cor}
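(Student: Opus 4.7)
The forward direction is immediate, since equal cotorsion pairs have equal cores. My plan for the converse is to use the classification Theorem \ref{classification} to reduce the equality of cotorsion pairs to a combinatorial equality of index subsets, and then to derive a contradiction from a well-chosen triangle coming from the mutation formula for $\Y'$.

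Assume $I'=I$. Write ${}^\bot(I[1])/I = \oplus_{j\in J} I_j$ for the complete decomposition. By Theorem \ref{classification}, any cotorsion pair in $\C$ with core $I$ has the form $(\pi^{-1}(\oplus_{j\in L}I_j), \pi^{-1}(\oplus_{j\in J-L}I_j))$ for a unique subset $L\subset J$. Applied to $(\X,\Y)$ this yields a subset $L$, and applied to $(\X',\Y')$ it yields an $L'$; to use it for $(\X',\Y')$ I first check $\Y'\subset{}^\bot(I[1])$, which follows from $I=I'\subset\X'$ together with $\Ext^1(\X',\Y')=0$ and the 2-Calabi-Yau Serre duality $\Hom(\Y',I[1])\cong D\Hom(I,\Y'[1])=0$. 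It now suffices to prove $L=L'$.

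Suppose for contradiction that $j_0\in L\setminus L'$. Pick any $N\in{}^\bot(I[1])$ with $\pi(N)$ a nonzero object of $I_{j_0}$. Then $N\in\X$ (since $I_{j_0}\subset\pi(\X)$) and $N\in\Y'$ (since $j_0\in J-L'$ gives $I_{j_0}\subset\pi(\Y')$). From the definition $\Y'=(\D\ast\Y[1])\cap{}^\bot(\D[1])$ there is a triangle $D\to N\stackrel{v}{\to} Y''[1]\to D[1]$ with $D\in\D\subset I$ and $Y''\in\Y$. Because $N\in\X$ and $Y''\in\Y$, the cotorsion condition gives $\Hom(N,Y''[1])=\Ext^1(N,Y'')=0$, so $v=0$. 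Applying $\Hom(N,-)$ to the triangle and using $v=0$ exhibits a section of $D\to N$, so $N$ is a direct summand of $D\in\D\subset I$, forcing $\pi(N)=0$ and contradicting $\pi(N)\neq 0$. The symmetric case $j_0\in L'\setminus L$ is analogous, so $L=L'$ and $(\X,\Y)=(\X',\Y')$.

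The main non-routine step is the splitting argument in the last paragraph: the vanishing $\Ext^1(\X,\Y)=0$ of the original cotorsion pair interacts with the mutation formula for $\Y'$ to trap any potential witness of $L\neq L'$ inside $\D\subset I$, where it becomes invisible in the quotient. Everything else is bookkeeping using the already-established classification of cotorsion pairs with a fixed core.
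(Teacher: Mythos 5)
Your proof is correct, but it follows a genuinely different route from the paper's. The paper's own argument is very short: it invokes Theorem 3.11(2) of [ZZ1] to deduce from $I'=I$ that the mutation datum $\D$ must already be all of $I$, and then observes that in the quotient ${}^\bot(I[1])/I$ the $\add I$-mutation becomes the $0$-mutation, i.e. the shift $\langle1\rangle$ applied to the t-structure $(\underline{\X},\underline{\Y})$; since $\underline{\X}$ and $\underline{\Y}$ are triangulated subcategories (by the proof of Theorem \ref{classification}), this shift fixes them, so $(\X',\Y')=(\X,\Y)$. You instead bypass the external result from [ZZ1] entirely: you use Theorem \ref{classification} to encode both cotorsion pairs by index subsets $L,L'\subseteq J$, and then rule out $L\neq L'$ by a splitting argument — a witness $N$ with $N\in\X\cap\Y'$ sits in a triangle $D\to N\xrightarrow{v}Y''[1]\to D[1]$ coming from $\Y'=(\D\ast\Y[1])\cap{}^\bot(\D[1])$, and $\Ext^1(\X,\Y)=0$ forces $v=0$, hence $N\in\add D\subseteq\I$ and $\pi(N)=0$. (For the symmetric case you do need $\Ext^1(\Y,\X)=0$, which holds by the $2$-CY property or Corollary \ref{cor4}; it would be worth saying this explicitly.) What each approach buys: the paper's proof is shorter and yields the stronger intermediate fact that $\D=I$, but rests on a citation to [ZZ1]; yours is self-contained within this paper, relying only on Proposition 5.5, Theorem \ref{classification} and the explicit mutation formula, at the cost of being somewhat longer.
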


\begin{proof} The "only if" part is obviously. We prove the "if"
part. Suppose $I'=I$. Then by Theorem 3.11(2) in \cite{ZZ1},  $\D=I$. It follows that passing to the quotient category ${}^{\bot}(I[1])/I$, $(\underline{\X'},\underline{\Y'})$ is $0-$mutation of the t-structure $(\underline{\X},\underline{\Y})$ in the quotient
triangulated category ${}^{\bot}(I[1])/I$. Then $((\underline{\X'},\underline{\Y'})=(\underline{\X},\underline{\Y})$ in this quotient category,
since $\underline{\X},\underline{\Y}$ are triangulated subcategories of ${}^\bot (I[1])/I$ by the proof of Theorem \ref{classification}.
Hence $(\X',\Y')=(\X,\Y)$ in $\C$.

\end{proof}

This corollary was proved for finite triangulated categories in \cite{ZZ2}.

\medskip

Note that there are many choices for $\D$. Two extreme cases are: when $\D=\{0\}$, then the $\D-$mutation of $(\X,\Y)$ is $(\X[1],\Y[1])$; when $\D=\add I$,
then the $\D-$mutation of $(\X,\Y)$ is $(\X,\Y)$ itself.

When $D$ is the
direct summand of $I$ with $\delta(D)=\delta(I)-1$,  the $\D-$mutation is the usually one, which was defined and studied for cluster tilting objects (subcategories) in \cite{BMRRT06,KR07,IY08},
for rigid objects(subcategories) in \cite{MP11}, for maximal
rigid objects(subcategories) in \cite{ZZ1}. We call the $\D-$mutation with $\delta(D)=\delta(I)-1$ just mutation, for simplicity. Denote this mutation by $\mu_{I_0}$, where $I_0$ is the missing
indecomposable object of $D$ in $I$.

\begin{rem}
 For a cluster tilting object $T$, the mutation $\mu$ is an involution. But the mutation of cotorsion pairs is not an involution in general (compare \cite{MP11}), see the following example.

\end{rem}

\begin{exm} Let $Q: 4\rightarrow 3\rightarrow 2\rightarrow 1$, and $\C=D^b(kQ)/\tau^{-1}[1]$, the cluster category of $Q$, see the  AR-quiver below. Set $\X=\add (P_1\oplus P_2\oplus P_3\oplus S_2), \Y=\add (P_2\oplus P_3\oplus P_4\oplus P_4[1]),$ $I=P_2\oplus P_3$.
Then $(\X,\Y)$ is a cotorsion pair with core $I$. We mutate the cotorsion pair $(\X,\Y)$ at $P_2$ to get a new cotorsion pair $(\X_1,\Y_1)$ with core $I_1$, where $\X_1=\add (E\oplus S_3\oplus P_3\oplus P_1),
\Y_1=\add(S_3\oplus P_3\oplus P_4[1]\oplus P_4), I_1=S_3\oplus P_3.$ Now we continues to mutate $(\X_1,\Y_1)$ at $S_3$. We get another new cotorsion pair $(\X_2,\Y_2)$ with core $I_2$, where $\X_2=\add(P_2\oplus S_2\oplus P_3\oplus E),
\Y_2=\add(S_2\oplus P_3\oplus P_4\oplus P_4[1]), I_2=S_2\oplus P_3$. We conclude that $(\X_2,\Y_2)\not= (\X,\Y)$.

\begin{center}
\setlength{\unitlength}{1cm}
\begin{picture}(10,4)

\mul(0,0)(2,0){6}{$\c$}
\mul(1,1)(2,0){5}{$\c$} \mul(2,2)(2,0){4}{$\c$}
\mul(3,3)(2,0){3}{$\c$}

\mul(0.15,0.15)(2,0){5}{\vector(1,1){0.9}}
\mul(1.15,1.15)(2,0){4}{\vector(1,1){0.9}}
\mul(1.15,1.05)(2,0){5}{\vector(1,-1){0.9}}
\mul(2.15,2.05)(2,0){4}{\vector(1,-1){0.9}}

\mul(2.15,2.15)(2,0){3}{\vector(1,1){0.9}}
\mul(3.15,3.05)(2,0){3}{\vector(1,-1){0.9}}

\put(-0.9,0){${P_1[1]}$} \put(0.1,1){$P_2[1]$}
\put(1.1,2){$P_3[1]$} \put(2.1,3){$P_4[1]$}

\put(1.5,0){$P_1$} \put(2.5,1){$P_2$}
\put(3.5,2){$P_3$} \put(4.5,3){$P_4$}

\put(3.5,0){$S_2$} \put(5.5,0){$S_3$} \put(4.5,1){$E$}

 \put(5.5,2){$I_2$} \put(6.5,1){$I_3$} \put(7.5,0){$I_4$}
\put(6,3){${P_1[1]}$} \put(7,2){$P_2[1]$}
\put(8,1){$P_3[1]$} \put(9,0){$P_4[1]$}
\end{picture}
\end{center}

\end{exm}

We define mutation quiver of cotorsion pairs in $\C$. It is a quiver whose vertices are cotorsion pairs, there is an arrow from the vertex to another vertex if the target
 cotorsion pair is a mutation of the initial one. This quiver is denoted by $\M(\C)$. It is not connected from Proposition 5.5.
 Denoted by $\M_d(\C)$ the subquiver of $\M(\C)$ consisting of vertices belong to
$CTN_d(\C)$. $\M(\C)= \bigsqcup_{d=0}^n\M_d(\C)$. Note that if we replace the each double anti-arrows by an edge, then $\M_n(\C)$ is the exchange graph of cluster tilting objects in $\C$. This graph
is conjectured to be connected for every indecomposable $2-$CY triangulated category \cite{Re10}.

Now we give the relation of mutation of cluster tilting objects containing $I$ as a direct summand with the mutation of cotorsion pairs.

\begin{prop}
 Let $(\X,\Y)$ be a cotorsion pair with core $I$ in $\C$, $T=T_{\X}\oplus I\oplus T_{\Y}$ a cluster tilting object.
Suppose $(\X',\Y')$ is a $\D-$mutation of $(\X,\Y)$, $I'$ is the core of $(\X',\Y')$. Then the $\D-$mutation $T'$ of $T$ is
$T'_{\X'}\oplus I'\oplus T'_{Y'}$.
\end{prop}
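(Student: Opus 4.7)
The plan is to apply Proposition 5.3(1) to the new cotorsion pair $(\X',\Y')$ and the mutated cluster tilting object $T'$. For this I must verify two things: (a) that $T'$ is itself a cluster tilting object in $\C$, and (b) that $I'$ is a direct summand of $T'$. Granted these, the stated decomposition is an immediate consequence of the bijection established in Proposition 5.3(1).

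For (a), I would observe that since $\add T$ is cluster tilting, the symmetric pair $(\add T,\add T)$ is itself a cotorsion pair in $\C$ with core $\add T$, and $\D \subset I \subset \add T$ by hypothesis. Applying the $\D$-mutation construction of Proposition 5.5 to this cotorsion pair produces another cotorsion pair whose two components coincide; that is, $(\mu^{-1}(\add T;\D),\mu^{-1}(\add T;\D)) = (\add T',\add T')$ for a cluster tilting object $T'$, and by definition this $T'$ is exactly the $\D$-mutation of $T$.

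For (b), I would use the explicit formulas
$$I' \;=\; (\D * I[1]) \cap {}^{\bot}(\D[1]), \qquad \add T' \;=\; (\D * \add T[1]) \cap {}^{\bot}(\D[1]).$$
From $I \subset \add T$ we immediately get $I[1]\subset \add T[1]$, hence $\D * I[1] \subset \D * \add T[1]$, and intersecting with ${}^{\bot}(\D[1])$ yields $I' \subset \add T'$. Thus every indecomposable of $I'$ is a direct summand of $T'$.

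With (a) and (b) in hand, Proposition 5.3(1) applied to the cotorsion pair $(\X',\Y')$ with core $I'$ and to the cluster tilting object $T'$ (which contains $I'$ as a direct summand) delivers a unique decomposition $T' = T'_{\X'} \oplus I' \oplus T'_{\Y'}$ with $T'_{\X'} \in \X'$ and $T'_{\Y'} \in \Y'$, which is exactly what the proposition claims. The only substantive ingredient is the monotonicity of the operator $\mu^{-1}(-;\D)$ used in (b); everything else is an assembly of previously proved facts, so I do not expect any real obstacle beyond carefully recording this inclusion.
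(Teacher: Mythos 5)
Your proof is correct, but it takes a genuinely different route from the paper's. You reduce the proposition to two facts proved inside $\C$: first, that $T':=\mu^{-1}(\add T;\D)=(\D\ast \add T[1])\cap{}^{\bot}(\D[1])$ is again a cluster tilting object, obtained by applying Proposition 5.5 to the cotorsion pair $(\add T,\add T)$ and then Lemma 2.4(3); second, that $I'\subset \add T'$, via the (trivial but essential) monotonicity of $Z\mapsto(\D\ast Z[1])\cap{}^{\bot}(\D[1])$ applied to $I\subset\add T$. With these, Proposition 5.3(1) hands you the decomposition. The paper instead passes to the subquotient category ${}^{\bot}(\D[1])/\D$, where both the mutation of the cotorsion pair and the mutation of the cluster tilting object become the shift $\langle1\rangle$, and reads off $\underline{T'}=\underline{T_{\X}}\langle1\rangle\oplus\underline{I}\langle1\rangle\oplus\underline{T_{\Y}}\langle1\rangle$ with $\underline{T_{\X}}\langle1\rangle\in\underline{\X'}$, $\underline{T_{\Y}}\langle1\rangle\in\underline{\Y'}$. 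Your version is shorter and avoids the subquotient machinery, at the cost of leaning on the identification of the Iyama--Yoshino mutation of $T$ with $\mu^{-1}(\add T;\D)$ (which the paper also uses implicitly) and of producing only the existence of the decomposition: the paper's argument additionally shows that $T'_{\X'}\oplus I'$ is precisely the mutation of $T_{\X}\oplus I$, i.e., it tracks which summands of $T'$ arise from which summands of $T$. For the statement as literally written, your argument suffices; if one wants the finer piece-by-piece correspondence that the surrounding discussion (e.g., Theorem 5.10) suggests, the subquotient argument delivers it and yours does not.
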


\begin{proof} For $\D=\add D$, where $D$ is a direct summand of $I$, we consider the subquotient category ${}^{\bot}(\D [1])/\D$. It is a triangulated category by \cite{IY08} with shift functor $<1>$.

In this subquotient category, $(\underline{\X},\underline{\Y})$ is a cotorsion pair with core $\underline{I}$ in \cite{ZZ2} and $\underline{T}=\underline{T_{\X}}\oplus \underline{I}\oplus \underline{T_{\Y}}$ is a cluster tilting object by \cite{IY08}. The images of their $\D-$mutations are $(\underline{\X'},\underline{\Y'})=(\underline{\X}<1>, \underline{\Y}<1>)$, $\underline{T'}=\underline{T}<1>=\underline{T_{\X}}<1>\oplus \underline{I}<1>\oplus \underline{T_{\Y}}<1>$ respectively. It follows that $\underline{T_{\X}}<1>\in \underline{\X}<1>$, $\underline{T_{\Y}}<1>\in \underline{ \Y}<1>$.
Therefore $T'=T'_{\X'}\oplus I'\oplus T'_{\Y'}$, where $T'_{\X'}\oplus I'$,
$T'_{\Y'}\oplus I' $ are $\X'-$cluster tilting object in $\X'$, $\Y'-$cluster tilting object in $\Y'$ respectively.

\end{proof}

Now we state and prove the main result in this section.

\begin{thm}  Let $(\X,\Y)$ be a cotorsion pair with core $I$ in a $2-$CY triangulated category $\C$ with a cluster tilting object. Let $T=T_{\X}\oplus I\oplus T_{\Y}$ be a cluster tilting object containing $I$
as a direct summand. Suppose that $T_0$ is an indecomposable direct summand of $T$. We consider the mutation $\mu_{T_0}(T)$ of $T$ in $T_0$.

$1.$ If $T_0$ is a direct summand of $I$, then $\mu_{T_0}(T)=T'_{\X'}\oplus I'\oplus T'_{\Y'}$, where $(\X',\Y')=\mu_{T_0}(\X,\Y)$ is the mutation of $(\X, \Y)$, $I'$ is the core of cotorsion pair $(\X',\Y')$.

$2.$ If $T_0$ is not the direct summand of $I$, then $\mu_{T_0}(T)=\mu_{T_0}(T_{\X}\oplus I)\oplus T_{\Y}$ when $T_0$ is a direct summand of $T_{\X}$, and  $\mu_{T_0}(T)=T_{\X} \oplus \mu_{T_0}(I\oplus T_{\Y})$ when
 $T_0$ is a direct summand of $T_{\Y}$.

\end{thm}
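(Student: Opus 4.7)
The plan is to handle the two cases separately. Part 1 follows essentially at once from the previous Proposition on the $\D$-mutation of cluster tilting objects, while Part 2 uses the subquotient decomposition from the proof of Theorem 4.4 to reduce IY mutation in $\C$ to a mutation inside $\X$.

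For Part 1, set $\D=\add(I/T_0)$, so that $\mu_{T_0}(\X,\Y)$ is by definition the $\D$-mutation of $(\X,\Y)$, with new core $I'$. Since $\delta(\D)=\delta(I)-1$, the paragraph just before the previous Proposition identifies the $\D$-mutation on cluster tilting objects with the ordinary Iyama--Yoshino mutation at the single missing summand $T_0$; thus the $\D$-mutation of $T$ is precisely $\mu_{T_0}(T)$. Applying the previous Proposition then gives the required decomposition $\mu_{T_0}(T)=T'_{\X'}\oplus I'\oplus T'_{\Y'}$ with respect to $(\X',\Y')$.

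For Part 2, assume $T_0\subseteq T_{\X}$; the case $T_0\subseteq T_{\Y}$ is symmetric. The crucial input is the decomposition ${}^{\bot}(I[1])/I=\X/I\oplus\Y/I$ of triangulated categories from the proof of Theorem 4.4, which implies that every morphism from an object of $\X$ to an object of $\Y$ in $\C$ factors through $\add I$. I will show that the minimal left $\add(T/T_0)$-approximation of $T_0$ in $\C$ can be chosen inside $\add((T_{\X}/T_0)\oplus I)$. Concretely, let $T_0\to B$ be the minimal left $\add((T_{\X}/T_0)\oplus I)$-approximation; for any $f\colon T_0\to T_{\Y}$ one factors $f=f_2 f_1$ with $f_1\colon T_0\to I''$, $I''\in\add I$, and then $f_1$ itself factors through $T_0\to B$, so $f$ does as well. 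Left minimality of $T_0\to B$ is an intrinsic property of the map and is preserved when enlarging the target class, so $T_0\to B$ is the minimal left $\add(T/T_0)$-approximation in $\C$. Consequently the IY exchange triangle at $T_0$ in $\C$ coincides with the exchange triangle for the $\X$-cluster tilting object $T_{\X}\oplus I$ at $T_0$, yielding $T_0^{*}\in\X$ and $\mu_{T_0}(T)=\mu_{T_0}(T_{\X}\oplus I)\oplus T_{\Y}$. The main obstacle lies in Part 1: one must verify that the ``$\D$-mutation of $T$'' appearing in the previous Proposition --- defined via the shift $<1>$ in the subquotient ${}^{\bot}(\D[1])/\D$ --- indeed specializes to the IY mutation $\mu_{T_0}(T)$ when $\delta(\D)=\delta(I)-1$; this in turn relies on compatibility of IY mutation with rigid-subcategory subquotients together with the fact that $\underline{T_0}$ is the single indecomposable in the core of $(\underline{\X},\underline{\Y})$ inside ${}^{\bot}(\D[1])/\D$.
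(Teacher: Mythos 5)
Your Part 2 follows the paper's own argument: the factorization of every $\X\rightarrow\Y$ morphism through $I$ (coming from ${}^{\bot}(I[1])/I=\X/I\oplus\Y/I$) forces the minimal left $\add(T/T_0)$-approximation of $T_0$ to lie in $\add(T_{\X}\oplus I)$, and the exchange triangle is then an exchange triangle for the $\X$-cluster tilting object $T_{\X}\oplus I$. The one point you pass over is why the new complement $T_0'$ lies in $\X$; the paper gets this by descending the triangle to ${}^{\bot}(I[1])/I$, where $\X/I$ is a triangulated direct summand. That is a small, fillable omission.

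Part 1 is where the genuine gap sits, and you have in fact pointed at it yourself in your last sentence without closing it: the identification ``the $\D$-mutation of $T$ is precisely $\mu_{T_0}(T)$'' for $\D=\add(I/T_0)$ is false in general. By the proof of the previous Proposition, the $\D$-mutation of $T$ is the preimage of $\underline{T}\langle1\rangle$ in ${}^{\bot}(\D[1])/\D$, so it replaces \emph{every} indecomposable summand of $T$ outside $\D$, whereas the Iyama--Yoshino mutation $\mu_{T_0}(T)$ replaces only $T_0$. The convention $\delta(\D)=\delta(I)-1$ makes the two operations agree on the core $I$ (which has exactly one summand outside $\D$), not on $T$, which generally has many summands outside $I$. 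Concretely, in the cluster category of $A_2$ take $I=T_0$ indecomposable, $(\X,\Y)=(\add I,{}^{\bot}(I[1]))$ and $T=I\oplus T_{\Y}$; then $\D=0$, the $\D$-mutation of $T$ is $T[1]$ and the new core is $I'=I[1]$, while the Iyama--Yoshino flip of $T$ at $T_0$ keeps $T_{\Y}$ and in general does not contain $I[1]$ as a summand, so the conclusion of Part 1 fails for the single-summand mutation. The statement is only correct --- and the paper's one-line proof only works --- if $\mu_{T_0}(T)$ in Part 1 is read as the $\D$-mutation of $T$ in the sense of the previous Proposition; the upgrade to the ordinary flip that your argument requires cannot be carried out.
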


\begin{proof} 1. The assertion follows from Proposition 5.9.

2. We will prove the case of that $T_0$ is a direct summand of $T_{\X}$, the proof for the other case is similar.
We first note that any morphism $f: X\rightarrow Y$ with $X\in \X, Y\in \Y$ factors through the core $I$. This dues to the fact the image of $f$ under the
projection $\pi: {}^{\bot}(I[1])\rightarrow {}^{\bot}(I[1])/I$ is zero since ${}^{\bot}(I[1])/I=\X/I\oplus \Y/I$ as triangulated categories. It follows that for the minimal left $T/T_{0}-$approximation of $T_0$, say
$g: T_0\rightarrow B$, we have $B\in \add (T_{\X}\oplus I)$. Then $g: T_0\rightarrow B$ is a minimal left $(T_{\X}\oplus I)-$approximation. Extend $g$ to a triangle $T_0\s{g}{\rightarrow}B\rightarrow T_0'\rightarrow T_0[1]$. It induces a triangle in the subfactor triangulated category ${}^{\bot}(I[1])/I: T_0\s{\underline{g}}{\rightarrow}B\rightarrow T_0'\rightarrow T_0<1>$ [IY]. It follows that $T_0'\in \X/I$ and $T_0'\in \X$. Then $\mu_{T_0}(T)=(T/T_0)\oplus T_0' =T_0'\oplus (T_{\X}/T_0)\oplus I\oplus T_{\Y}=\mu_{T_0}(T_{\X}\oplus I)\oplus T_{\Y}$.

\end{proof}

\begin{rem} For any cotorsion pair $(\X,\Y)$ with core $I$ in $\C$.  From the theorem above, $\X$ (or $\Y$) has weak cluster structure in the sense of \cite{BIRS}, i.e. the $\X-$cluster tiltings $T_{\X}\oplus I$ are the candidates
 of extended
clusters, where $I$ is the set of coefficients; one can mutate the $\X-$cluster tiltings at $T_0$ to get a new $\X-$cluster tilting by the above theorem; and one also have exchange triangles. There is a
substructure of $\C$ induced by a $\X-$cluster tilting and a $\Y-$cluster tilting: Let $T_{\X}\oplus I$ be a $\X-$cluster tilting, $T_{\Y}\oplus I$ a $\Y-$cluster tilting. Then $T_{\X}\oplus I\oplus T_{\Y}$ is the
cluster tilting in $\C$ by Proposition 5.3. We call that  $T_{\X}\oplus I$ and $T_{\Y}\oplus I$ give a substructure of $\C$ (compare \cite{BIRS}) if for any $\X-$cluster tilting $T'_{\X}\oplus I$,
 $\Y-$cluster tilting $T'_{\Y}\oplus I$,  both of which are obtained from $T_{\X}\oplus I$ and $T_{\Y}\oplus I$ respectively via a finite number of mutations, the cluster tilting object $T'_{\X}\oplus I\oplus T'_{\Y}$ can be obtained from
$T_{\X}\oplus I\oplus T_{\Y}$ via a finite number of mutations in $\C$.

\end{rem}

\section{Hearts of cotorsion pairs}

As an application of the classification theorem of cotorsion pairs, we determine
the hearts of cotorsion pairs in $2-$CY triangulated categories with cluster tilting objects in this section. Hearts of cotorsion pairs in any triangulated category were introduced by Nakaoka \cite{Na11}, which unify
the construction of hearts of t-structures \cite{BBD81} and construction of the abelian quotient categories
by cluster tilting subcategories \cite{BMRRT06,KR07,KZ}.

We recall the construction of hearts of cotorsion pairs from \cite{Na11}: Let $\C$ be a triangulated category and $(\X,\Y)$ a cotorsion pair with core $\I$ in $\C$. Denote by $\H$ the subcategory
$(\X[-1]\ast \I)\cap (\I\ast\Y[1])$. The heart of the cotorsion pair $(\X,\Y)$ is
defined as the quotient category $\H/\I$, denoted by $\underline{\H}$.
\medskip

It was proved that $\underline{\H}$ is an abelian category \cite{Na11}. There is a cohomology functor $H=h \pi$ from $\C$ to $\underline{\H}$, where $\pi$ is the quotient functor from $\C$ to $\underline{\C}=\C/\I$ and
$h$ is a functor from $\underline{\C}$ to $\underline{\H}$. Those constructions were given in Proposition 3.4 and Proposition 4.2 in \cite{AN} combined with Construction 4.2, Proposition 4.3 and Remark 4.5 in \cite{Na11}. For the convenience of reader, we recall the definitions of the functor $h$ from \cite{AN} as follows.

For any $M\in\C$, there is a triangle
$Y_M\rightarrow X_M\rightarrow M\rightarrow Y_M[1]$ with $X_M\in\X,Y_M\in\Y$, since $(\X,\Y)$ is a cotorsion pair. Then there is a triangle $X_M'[-1]\rightarrow X_M\rightarrow Y_M'\rightarrow X_M'$ with $X_M'\in\X,Y_M'\in\Y$, since $(\X[-1],\Y[-1])$ is a cotorsion pair. Composing the morphism from $X_M'[-1]$ to $X_M$ and the morphism from $X_M$ to $M$, we have the following commutative diagram of triangles in $\C$ by the octahedral axiom, in which we get $\widetilde{M}$ and $s_M:M\rightarrow \widetilde{M}$:
$$\begin{array}{ccccccc}
&&X_M'[-1]&=&X_M'[-1]&&\\
&&\downarrow&&\downarrow&&\\
Y_M&\longrightarrow&X_M&\longrightarrow&M&\longrightarrow&Y_M[1]\\
\parallel&&\downarrow&&s_M\downarrow\ \ \  \ \ \ &&\parallel\\
Y_M&\longrightarrow&Y_M'&\longrightarrow&\widetilde{M}&\longrightarrow&Y_M[1]\\
&&\downarrow&&\downarrow&&\\
&&X_M'&=&X_M'&&
\end{array}(\star).$$
Using the definition of cotorsion pair $(\X[-1],\Y[-1])$ again, we have a triangle $X_M''[-1]\rightarrow \widetilde{M}\rightarrow Y_M''\rightarrow X_M''$ and then we have another triangle $Y_M'''\rightarrow X_M'''\rightarrow Y_M''\rightarrow Y_M'''[1]$ with $X_M'',X_M'''\in\X$ and $Y_M'',Y_M'''\in\Y$. Compose the morphism from $X_M'''$ to $Y_M''$ and the morphism from $Y_M''$ to $X_M''$, by the octahedral axiom, we have the following commutative diagram of triangles in $\C$, in which we have $\overline{M}$ and $t_M:\overline{M}\rightarrow \widetilde{M}$:
$$\begin{array}{ccccccc}
&&Y_M'''&=&Y_M'''&&\\
&&\downarrow&&\downarrow&&\\
X_M''[-1]&\rightarrow&\overline{M}&\rightarrow&X_M'''&\rightarrow&X_M''\\
\parallel&&t_M\downarrow\ \ \ \ \ &&\downarrow&&\parallel\\
X_M''[-1]&\rightarrow&\widetilde{M}&\rightarrow&Y_M''&\rightarrow&X_M''\\
&&\downarrow&&\downarrow&&\\
&&Y_M'''[1]&=&Y_M''''[1]&&
\end{array}(\star\star).$$
The image of $M$ under $h$ is defined as $\overline{M}$. Abe and Nakaoka proved that $\overline{M}\in\underline{\H}$.
It is easy to see that up to isomorphisms in $\underline{\H}$, $\overline{M}$ does not depend on the choice of $X_M,X_M',X_M'',X_M'''$
and $Y_M,Y_M',Y_M'',Y_M'''$ (See Section 4 in \cite{AN} for details).

For any morphism $f:M\rightarrow N$ in $\underline{\C}$,
there is a unique morphism $\widetilde{f}$ in $\underline{\C}$ such that the left square of the following diagram commutate (Proposition 4.3 in \cite{Na11}) and then there is a unique morphism $\overline{f}$ in $\underline{\C}$ such that the right square in the following diagram commutate (Remark 4.5 in \cite{Na11}):
$$\begin{array}{ccccc}
M&\s{\underline{s_M}}\rightarrow& \widetilde{M}&\s{\underline{t_M}}\leftarrow&\overline{M}\\
f\downarrow&&\widetilde{f}\downarrow\ \ \ \ &&\overline{f}\downarrow\ \ \ \ \\
N&\s{\underline{s_N}}\rightarrow& \widetilde{N}&\s{\underline{t_N}}\leftarrow&\overline{N}
\end{array}(\star\star\star).$$
The image of $f$ under $h$ is defined as $\overline{f}$.

\medskip

We state two simple facts followed from the constructions above.

\begin{lem}
$H(\X)=0$ and $H(\Y)=0$ hold.
\end{lem}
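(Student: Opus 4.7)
The plan is to show that for every $M\in\X$ and every $M\in\Y$, the object $\overline{M}$ produced by diagrams $(\star)$ and $(\star\star)$ actually lies in $\I$, so its image in $\underline{\H}=\H/\I$ vanishes and hence $H(M)=h\pi(M)=0$.

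First I will record two auxiliary inclusions that rely only on the extension-closedness of both sides of a cotorsion pair: $\X\subseteq\X[-1]*\I$ and $\Y\subseteq\I*\Y[1]$. For $M\in\X$, any triangle $X_M'[-1]\to M\to Y_M'\to X_M'$ provided by $(\X[-1],\Y[-1])$ has $Y_M'\in M*X_M'\subseteq\X$ by extension-closedness of $\X$, so $Y_M'\in\X\cap\Y=\I$. Dually, for $M\in\Y$, any triangle $Y_M\to X_M\to M\to Y_M[1]$ provided by $(\X,\Y)$ has $X_M\in Y_M*M\subseteq\Y$ by extension-closedness of $\Y$, so $X_M\in\I$.

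Exploiting the fact that the construction of $\overline{M}$ is independent of the intermediate triangles up to isomorphism in $\underline{\H}$, I then make very convenient choices in each case. For $M\in\X$ I take $X_M=M$, $Y_M=0$ in the Step 1 triangle, so the third row of $(\star)$ degenerates to $0\to Y_M'\to\widetilde{M}\to 0$, giving $\widetilde{M}\cong Y_M'\in\I$ by the first inclusion; since $\widetilde{M}\in\I$ sits in both $\X$ and $\Y$, I then take $X_M''=Y_M'''=0$ and $Y_M''=X_M'''=\widetilde{M}$ in $(\star\star)$, whose second row collapses to $0\to\overline{M}\to\widetilde{M}\to 0$ and hence $\overline{M}\cong\widetilde{M}\in\I$. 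For $M\in\Y$ I use the second inclusion to pick a Step 1 triangle $Y_M\to X_M\to M\to Y_M[1]$ with $X_M\in\I$, and I take the Step 2 triangle to be the split triangle $0\to X_M\to X_M\to 0$; diagram $(\star)$ then collapses and yields $\widetilde{M}\cong M$, after which I take the Step 4 triangle as $0\to M\to M\to 0$ and the Step 5 triangle as the Step 1 triangle once again, so the second row of $(\star\star)$ reads $0\to\overline{M}\to X_M\to 0$ and hence $\overline{M}\cong X_M\in\I$.

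The only real obstacle is diagram bookkeeping, but once these convenient triangles are plugged in, each panel of $(\star)$ and $(\star\star)$ either degenerates to a split triangle or repeats a triangle already chosen, so no genuine octahedral juggling is required to identify $\overline{M}$ explicitly. Since $\overline{M}$ is well-defined up to isomorphism in $\underline{\H}$, these two computations together prove $H(\X)=0$ and $H(\Y)=0$.
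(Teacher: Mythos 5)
Your proposal is correct and follows essentially the same strategy as the paper's proof: exploit the independence of $\overline{M}$ from the choice of approximating triangles, make degenerate choices so that the diagrams $(\star)$ and $(\star\star)$ collapse, and use extension-closedness of $\X$ and $\Y$ to conclude that $\overline{M}$ lands in $\I=\X\cap\Y$ and hence vanishes in $\underline{\H}$. The only cosmetic difference is that you apply the extension-closure observation at the first approximation step (getting $Y_M'\in\I$, resp.\ $X_M\in\I$, immediately), whereas the paper defers it to the last step by noting $X_M'''\in\Y\ast\Y\subset\Y$; both routes are valid.
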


\begin{proof} We give a proof for $H(\X)=0$, $H(\Y)=0$ can be proved dually.
Let $M$ be an object in $\X$. One can choose $Y_M=0$. Then $\widetilde{M}\cong Y_M'$.
So one can choose $X_M''=0$. Then $h(M)=\overline{M}\cong X_M'''$. Note that $X_M'''\in\Y\ast\Y\subset\Y$ and $\I=\X\cap\Y$. We have that $h(M)\in \I$ and hence $h(M)\cong 0$ in $\underline{\H}$.
\end{proof}

\begin{lem}
$h|_{\underline{\H}}=\id_{\underline{\H}}$.
\end{lem}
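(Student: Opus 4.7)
The plan is to show that when $M \in \H$, the four auxiliary triangles appearing in the construction of $\overline{M}$ can all be chosen so that the two octahedra in diagrams $(\star)$ and $(\star\star)$ degenerate, forcing $s_M$ and $t_M$ to be isomorphisms; since $\overline{M}$ is independent of the chosen triangles up to isomorphism in $\underline{\H}$, this will give $h(M) \cong M$ in $\underline{\H}$.

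First I would unpack the hypothesis. By definition, $M \in \H$ means $M \in \X[-1] \ast \I$ and $M \in \I \ast \Y[1]$, so there exist triangles $Y \to I_1 \to M \to Y[1]$ (from $M \in \I \ast \Y[1]$, rotated once) with $Y \in \Y$, $I_1 \in \I$, and $X[-1] \to M \to I_2 \to X$ (from $M \in \X[-1] \ast \I$) with $X \in \X$, $I_2 \in \I$. I would use the first of these as the initial triangle in diagram $(\star)$, taking $Y_M = Y$ and $X_M = I_1 \in \I \subseteq \X$. Since $X_M = I_1$ already lies in $\Y$, I can take the second required triangle (coming from the cotorsion pair $(\X[-1],\Y[-1])$ applied to $X_M$) to be the split one $0 \to I_1 \to I_1 \to 0$, i.e.\ $X_M' = 0$, $Y_M' = I_1$. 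Then the right-hand column of $(\star)$ collapses to $0 \to M \to \widetilde{M} \to 0$, making $s_M: M \to \widetilde{M}$ an isomorphism in $\C$.

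Next I would repeat the trick on the second octahedron. Since $\widetilde{M} \cong M$, I can feed the triangle $X[-1] \to M \to I_2 \to X$ into diagram $(\star\star)$ as $X_M'' = X$, $Y_M'' = I_2 \in \I \subseteq \X$. Because $Y_M''$ already lies in $\X$, the last triangle can be taken as $0 \to I_2 \to I_2 \to 0$, giving $Y_M''' = 0$, $X_M''' = I_2$. Then the left-hand column of $(\star\star)$ degenerates and $t_M: \overline{M} \to \widetilde{M}$ becomes an isomorphism. Composing, $\overline{M} \cong M$ in $\C$, hence in $\underline{\H}$. Using Abe--Nakaoka's independence of $\overline{M}$ on the chosen triangles (cited in the paragraph just before the lemma), this identification is canonical in $\underline{\H}$.

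For morphisms, I would use diagram $(\star\star\star)$: for $f: M \to N$ in $\underline{\H}$, the maps $\underline{s_M}, \underline{t_M}, \underline{s_N}, \underline{t_N}$ produced by the choices above are isomorphisms in $\underline{\C}$, so the unique $\overline{f}$ satisfying the two commuting squares is $\overline{f} = (\underline{t_N})^{-1} \underline{s_N} f (\underline{s_M})^{-1} \underline{t_M}$. Transporting along the canonical isomorphisms $\overline{M} \cong M$ and $\overline{N} \cong N$ then identifies $h(f) = \overline{f}$ with $f$, yielding $h|_{\underline{\H}} = \id_{\underline{\H}}$ as functors. The only genuinely delicate point I expect is the naturality clause: one must be careful that the isomorphism $\overline{M} \cong M$ I constructed is the same one picked out by the well-definedness statement cited from \cite{AN,Na11}, but this follows from the uniqueness of $\widetilde{f}$ and $\overline{f}$ in $(\star\star\star)$ once one checks that the two choices made for $M$ and $N$ can be made functorially.
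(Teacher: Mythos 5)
Your proposal is correct and follows essentially the same route as the paper: choose $X_M\in\I$ (you derive this from $M\in\I*\Y[1]$ where the paper cites Corollary 3.3 of \cite{Na11}), then take $X_M'=0$, $Y_M'=X_M$ so that $(\star)$ collapses and $s_M$ is an isomorphism, and dually collapse $(\star\star)$ so that $t_M$ is an isomorphism. Your extra care with the morphism part via the uniqueness in $(\star\star\star)$ is the naturality check the paper leaves implicit.
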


\begin{proof}
By the definition of $h$, one only need to check that $h(M)\cong M$ for any $M\in\underline{\H}$.
In this case, we have that $X_M\in \I$ by Corollary 3.3 in \cite{Na11}. One can choose $Y_M'=X_M$ and then
$\widetilde{M}\cong M$. By the dual, one can have that $\overline{M}\cong\widetilde{M}$. Thus this lemma holds.
\end{proof}

Let $(\X_1,\Y_1)$ and $(\X_2,\Y_2)$ be two cotorsion pairs with the same core $\I$ in a triangulated category $\C$.
Denote by $\underline{H_i}$ the heart of $(\X_i,\Y_i)$, $i=1,2$.
Let $H_i=h_i\pi$
be the cohomology functor from $\C$ to $\underline{\H_i}$ given in \cite{AN},
and $\iota_i$ be the inclusion functor from $\underline{\H_i}$ to $\underline{\C}$, $i=1,2$.
The composition functors $h_1\iota_2$ and $h_2\iota_1$ are denoted by $E$ and $F$ respectively.

\begin{center}
\setlength{\unitlength}{1cm}
\begin{picture}(5.5,4)

\put(-0.15,2){$\C$}
\put(0.3,2.1){\vector(1,0){1.4}}\put(1.1,2.15){\scriptsize$\pi$}
\put(1.85,2){$\underline{\C}$}
\put(2.3,2.3){\vector(2,1){2.4}}\put(3.2,2.9){\scriptsize$h_1$}
\put(4.85,3.5){$\underline{\H_1}$}
\put(2.3,1.9){\vector(2,-1){2.4}}\put(3.2,1.2){\scriptsize$h_2$}
\put(4.85,0.5){$\underline{\H_2}$}
\put(4.7,3.35){\vector(-2,-1){2.4}}\put(3.25,2.4){\scriptsize$\iota_1$}
\put(4.7,0.85){\vector(-2,1){2.4}}\put(3.25,1.65){\scriptsize$\iota_2$}
\put(5.15,3.2){\vector(0,-1){2.3}}\put(4.75,2){\scriptsize$E$}
\put(5,0.9){\vector(0,1){2.3}}\put(5.2,2){\scriptsize$F$}
\put(0.3,2.2){\vector(3,1){4.3}}\put(2.25,3){\scriptsize$H_1$}
\put(0.3,2){\vector(3,-1){4.3}}\put(2.25,1){\scriptsize$H_2$}
\end{picture}
\end{center}

\begin{lem}
If $H_1({}^\bot (\I[1]))=0$ and $H_1((\I[-1])^\bot)=0$,
then $EF\simeq\id_{\underline{\H_1}}$.
\end{lem}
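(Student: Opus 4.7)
The strategy is to exploit the hypothesis that $H_1$ annihilates both $\X_2$ and $\Y_2$, and to chase this through the explicit construction of $h_2$ recalled above. First I would observe that $\X_2\subseteq{}^\bot(\I[1])$ and $\Y_2\subseteq(\I[-1])^\bot$: the former follows from $\I\subseteq\Y_2$ together with $\Ext^1(\X_2,\Y_2)=0$, and the latter from $\I\subseteq\X_2$ combined with the identification $\Hom(\I[-1],\Y_2)=\Hom(\I,\Y_2[1])=0$, which vanishes because $(\X_2,\Y_2[1])$ is a torsion pair. Consequently, the two hypotheses of the lemma imply $H_1(X)=0$ for every $X\in\X_2$ and $H_1(Y)=0$ for every $Y\in\Y_2$.

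Now fix $M\in\underline{\H_1}$; by Lemma 6.2, $h_1(M)\cong M$. Apply the construction of $h_2$ to $M$ to obtain the diagrams $(\star)$ and $(\star\star)$ together with the distinguished morphisms $s_M:M\to\widetilde{M}$ and $t_M:\overline{M}\to\widetilde{M}$, where $\overline{M}=h_2(M)$. The plan is to show that both $h_1(s_M)$ and $h_1(t_M)$ are isomorphisms in $\underline{\H_1}$, whence
\[ EF(M)=h_1(\overline{M})\cong h_1(\widetilde{M})\cong h_1(M)\cong M, \]
and naturality in $M$ is automatic from the functorial diagram $(\star\star\star)$ recording the dependence of $s$ and $t$ on morphisms.

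Using that $H_1$ is cohomological (cf.~\cite{AN,Na11}), I would argue for $s_M$ as follows. Apply $H_1$ to the triangle $X_M'[-1]\to M\xrightarrow{s_M}\widetilde{M}\to X_M'$ from $(\star)$. Since $X_M'\in\X_2$ gives $H_1(X_M')=0$, $H_1(s_M)$ is surjective. Moreover, commutativity of $(\star)$ factors the morphism $X_M'[-1]\to M$ as $X_M'[-1]\to X_M\to M$ with $X_M\in\X_2$; applying $H_1$ and using $H_1(X_M)=0$, the map $H_1(X_M'[-1])\to H_1(M)$ vanishes, so $H_1(s_M)$ is also injective, hence an isomorphism. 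The argument for $t_M$ is dual: the triangle $Y_M'''\to\overline{M}\xrightarrow{t_M}\widetilde{M}\to Y_M'''[1]$ from $(\star\star)$ with $Y_M'''\in\Y_2$ gives $H_1(t_M)$ injective, and commutativity of $(\star\star)$ factors $\widetilde{M}\to Y_M'''[1]$ through $Y_M''\in\Y_2$, so $H_1(Y_M'')=0$ forces this connecting map to vanish on $H_1$, making $H_1(t_M)$ surjective. The principal technical point will be verifying these two factorization claims precisely from the octahedral structure of $(\star)$ and $(\star\star)$, and confirming that the two resulting isomorphisms $h_1(M)\cong h_1(\widetilde{M})\cong h_1(\overline{M})$ assemble into a natural transformation $\id_{\underline{\H_1}}\to EF$; once these are in hand, the conclusion follows immediately.
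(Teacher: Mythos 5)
Your proposal is correct and follows essentially the same route as the paper's own proof: both apply the cohomological functor $H_1$ to the vertical triangles through $s_M$ in $(\star)$ and through $t_M$ in $(\star\star)$, kill the outer terms/morphisms using the factorizations through $\X_2\subseteq{}^\bot(\I[1])$ and $\Y_2\subseteq(\I[-1])^\bot$, conclude that $H_1(s_M)$ and $H_1(t_M)$ are isomorphisms, and obtain naturality from $(\star\star\star)$. The only addition is your explicit verification of the inclusions $\X_2\subseteq{}^\bot(\I[1])$ and $\Y_2\subseteq(\I[-1])^\bot$, which the paper takes for granted.
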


\begin{proof}
For any $M\in\underline{\H_1}$, we have the above commutative diagrams $(\star)$ and $(\star\star)$
with $X_M$, $X_M'$, $X_M''$, $X_M'''\in\X_2$ and $Y_M,Y_M',Y_M'',Y_M'''\in\Y_2$.
Then $h_2(M)=\overline{M}$ by the definition.
The first and the last morphisms in the third column of
the diagram $(\star)$ and in the second column of the diagram $(\star\star)$ factor
through ${}^\bot (\I[1])$ or $(\I[-1])^\bot$ respectively, by $\X_2\subset{}^\bot (I[1])$
and $\Y_2\subset (I[-1])^\bot$. Then the image of these morphisms under
$H_1$ are zero. Applying the cohomology functor $H_1$ to these two triangles
(in the third column of the diagram $(\star)$ and in the second column of the
 diagram $(\star\star)$), one has two isomorphisms in $\underline{\H_1}$:
$$H_1M\s{H_1(s_M)}\longrightarrow E\widetilde{M}$$
and
$$EFM\s{H_1(t_{M})}\longrightarrow E\widetilde{M}.$$
Since $M\in\underline{\H_1}$, so $H_1M=M$ by Lemma 6.2.

For any morphism $f:M\rightarrow N$ in $\underline{\H_1}$, applying the functor $h_1$ to the diagram $(\star\star\star)$, we have the following
commutative diagram in $\underline{\H_1}$:
$$\begin{array}{ccc}
h_1M&\s{H_1(t_{M})^{-1}H_1(s_M)}\longrightarrow&EFM\\
h_1f\downarrow\ \ \ \ \ &&EFf\downarrow\ \ \ \ \ \ \ \\
h_1N&\s{H_1(t_{N})^{-1}H_1(s_N)}\longrightarrow&EFN
\end{array}.$$
Since $M,N\in\underline{H_1}$, then $h_1M=M$, $h_1N=N$ and $h_1f=f$ by Lemma 6.2. Therefore, $\id_{\underline{\H_1}}\simeq EF$.

\end{proof}

From now on to the end of this section, we assume that $\C$ is a 2-CY triangulated category. We continue to use the same notations as above. Fixed a cotorsion pair $(\X,\Y)$ with core $\I$, which is assumed functorially finite in $\C$ (e.g. $\I$ contains only finitely many indecomposable objects). Let $(\X_1,\Y_1)$ be the cotorsion pair $(\I,{}^\bot (\I[1]))$ and $(\X_2,\Y_2)=(\X, \Y)$. Then the condition of Lemma 6.3 holds automatically by Lemma 6.1 and the heart $\H_1$ is equivalent to the module category over $\I$ \cite{IY08}, denoted by $\mod\ \I$. By Corollary 3.6 in \cite{ZZ2}, we have that $(\X/\I,\Y/\I)$ is a t-structure in the 2-CY triangulated category ${}^\bot (\I[1])/\I$. Recall that the shift functor $\langle1\rangle$ in the triangulated category ${}^\bot (\I[1])/\I$ defined in \cite{IY08} is obtained by the following triangle: $M\rightarrow I_M\rightarrow M\langle1\rangle\rightarrow M[1]$, where $M\in{}^\bot( \I[1])$, $I_M\in \I$. We denote the heart $(\X/\I)\langle-1\rangle\bigcap(\Y/\I)\langle1\rangle$ of this t-structure by $\A$ which is an abelian category \cite{BBD81}.

\begin{lem}
The category $\A$ is an abelian subcategory of the heart $\underline{\H}$ of $(\X,\Y)$.
\end{lem}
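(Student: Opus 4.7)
The plan is to establish, in three stages, that the restriction of the quotient functor $\C\to\C/\I$ to the subquotient triangulated category ${}^\bot(\I[1])/\I$ sends $\A$ into $\underline{\H}$ as a full abelian subcategory. Namely I would check: (i) every object of $\A$ already lies in $\H=(\X[-1]\ast\I)\cap(\I\ast\Y[1])$, so $\A$ descends to $\underline{\H}$; (ii) the morphism spaces in $\A$ and $\underline{\H}$ agree on objects of $\A$, giving a fully faithful embedding; (iii) the abelian structures are compatible, i.e.\ kernels, cokernels and short exact sequences computed in $\A$ as the heart of the t-structure $(\X/\I,\Y/\I)$ coincide with those computed in $\underline{\H}$ via the functor $h$ reviewed above.

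For (i), fix $M\in\A$. From $M\in(\X/\I)\langle-1\rangle$ I obtain some $X\in\X$ with $M\langle 1\rangle\cong X$ in ${}^\bot(\I[1])/\I$; composing the defining triangle $M\to I_M\to M\langle 1\rangle\to M[1]$ of the subquotient shift (with $I_M\in\I$) with this isomorphism—adjusting the middle term by an object of $\I$ if needed, which stays in $\X$ since $\I\subset\X$—produces a triangle in $\C$ showing $M\in\X[-1]\ast\I$. A dual argument applied to $M\in(\Y/\I)\langle 1\rangle$, using the defining triangle of $\langle-1\rangle$, gives $M\in\I\ast\Y[1]$. Hence $M\in\H$. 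For (ii), morphisms in both $\A$ and $\underline{\H}$ are morphisms in $\C$ modulo those factoring through an object of $\I$; since $\A$ sits inside $\H\subset{}^\bot(\I[1])$, the two quotient Hom-spaces coincide, and the inclusion $\A\hookrightarrow\underline{\H}$ is automatically fully faithful.

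The main obstacle is (iii). Given a morphism $f:M\to N$ with $M,N\in\A$, I would embed $f$ in a triangle $M\to N\to C\to M[1]$ in ${}^\bot(\I[1])$ and compare two recipes for kernel/cokernel: the t-structure truncation in ${}^\bot(\I[1])/\I$ with respect to $(\X/\I,\Y/\I)$, and Nakaoka's construction of $h(C)$ in $\C/\I$ via the diagrams $(\star)$ and $(\star\star)$. The comparison reduces to a diagram chase with the octahedral axiom, using the vanishing of $\Hom_{\C}(\X,\Y[-1])$ and $\Hom_{\C}(\Y,\X[1])$ coming from the cotorsion-pair axiom, to see that both truncations produce isomorphic objects modulo $\I$, which forces the two abelian structures on $\A$ to agree. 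As a consistency check, Theorem 4.9 of \cite{IY08} makes ${}^\bot(\I[1])/\I$ a $2$-CY triangulated category admitting cluster tilting objects, so Corollary \ref{cor3} applied inside this subquotient already forces $\A=0$; this collapses the lemma to the trivial fact that the zero category embeds as an abelian subcategory, but the direct compatibility argument is what carries the conceptual content needed when the lemma is re-used in the comparison of $\underline{\H}$ with $\mod\I$.
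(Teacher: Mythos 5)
Your step (i) is, in substance, exactly the paper's proof: the authors simply observe that the defining triangle $M\rightarrow I_M\rightarrow M\langle1\rangle\rightarrow M[1]$ of the subquotient shift gives $\X\langle-1\rangle\subset\X[-1]\ast\I$ and, dually, $\Y\langle1\rangle\subset\I\ast\Y[1]$, whence $\A=\X\langle-1\rangle/\I\;\bigcap\;\Y\langle1\rangle/\I\subset\underline{\H}$ --- and that containment is the entirety of their argument. Your (ii) is a harmless (and correct) remark, since both $\A$ and $\underline{\H}$ are full subcategories of $\C/\I$. Your (iii) goes beyond what the paper establishes: the authors never verify that the exact structure of $\A$ as heart of the t-structure $(\X/\I,\Y/\I)$ agrees with the one induced from Nakaoka's heart, and your own treatment of this point is only a sketch ("a diagram chase with the octahedral axiom") rather than a proof. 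That said, nothing later in the paper uses more than the containment --- the lemma is invoked only to form the additive quotient $\underline{\H}/\A$ and to note $E(\A)=0$ --- so leaving (iii) unfinished does not create a gap in what is actually needed.

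One genuine correction: your closing "consistency check" misreads the hypotheses. At this point in Section 6 the category $\C$ is assumed only to be $2$-CY with $\I$ functorially finite; it is \emph{not} assumed to admit a cluster tilting object. Hence Theorem 4.9 of Iyama--Yoshino does not apply, ${}^\bot(\I[1])/\I$ need not have cluster tilting objects, and Corollary \ref{cor3} cannot be invoked to kill $\A$. Indeed $\A$ can be a genuine nonzero abelian category: take $\I=0$ and $(\X,\Y)$ a nontrivial t-structure, in which case $\A$ is its heart. The vanishing of $\A$ is deduced only in the second half of Theorem \ref{decomthm}'s Section 6 counterpart (Theorem 6.5), under the additional hypothesis that $\C$ has a cluster tilting object; the lemma itself, and the equivalence $\underline{\H}/\A\simeq\mod\ \I$, are meant to hold without that hypothesis, so the lemma does not "collapse to the trivial fact that the zero category embeds."
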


\begin{proof} Obviously $\A, \underline{\H}$ are the subcategories of $\underline{\C}$.
Since $\X\langle-1\rangle\subset\X[-1]*\I$ and $\Y\langle1\rangle\subset \I*\Y[1]$, then we have that $$\A=(\X/\I)\langle-1\rangle\bigcap(\Y/\I)\langle1\rangle=\X\langle-1\rangle/\I\bigcap\Y\langle1\rangle/\I
\subset\underline{\H}.$$
\end{proof}

%\begin{lem}
%$\X[-1]\ast I=I[-1]\ast\X$.
%\end{lem}

%\begin{proof}
%For any object $M\in\X[-1]\ast I$, we have a triangle
%$X_M[-1]\rightarrow M\rightarrow I_M\s{h}\rightarrow X_M$, where
%$X_M\in\X$, $I_M\in I$. Take a right $I-$approximation of $X_M$:
%$I_X\s{g}\rightarrow X_M$. There exists a morphism $f:I_M\rightarrow I_X$
%such that $h=gf$. From this, we have a commutative diagram of two triangles:
%$$\begin{array}{ccccccc}
%X_M[-1]&\rightarrow& M&\rightarrow& I_M&\s{h}\rightarrow& X_M\\
%\parallel&&\downarrow&&f\downarrow&&\parallel\\
%X_M[-1]&\rightarrow& N&\rightarrow& I_X&\s{g}\rightarrow& X_M.
%\end{array}$$
%By Corollary 5.7, $N\in\X$. By Lemma ?, we have a triangle
%$M\rightarrow N\oplus I_M\rightarrow I_X\rightarrow M[1]$.
%Then $M\in I[-1]\ast\X$.

%For any object $M\in I[-1]\ast\X$, we have a triangle
%$I_M[-1]\rightarrow M\rightarrow X_M\rightarrow I_M$

%\end{proof}

 For the subcategory $\A$ of $\underline{\H}$, we use $\underline{\H}/\A$ to denote the quotient category, whose objects are the same as $\underline{\H}$, whose morphisms are the factor additive group $\Hom_{\underline{\H}/\A}(X,Y)=\Hom_{\underline{\H}}(X,Y)/\A(X,Y)$, for $X,Y\in \underline{\H}$. Where $\A(X,Y)$ is the subgroup of $\Hom_{\underline{\H}}(X,Y)$ consisting of morphisms which factor through an object in $\A$. It is an additive category, and  the natural projection $\pi_{\A}:\underline{\H}\rightarrow \underline{\H}/\A$ is an additive functor.
 \bigskip

 Since $H_1(\A)=0$ by $\A\subset{}^\bot (\I[1])/\I$, we have $E(\A)=0$. Then $E$ induces an additive functor $E':\underline{\H}/\A\rightarrow \mod \ \I$ which makes the following diagram commute:

\begin{center}
\setlength{\unitlength}{1cm}
\begin{picture}(5,2.5)
\put(1.8,0.1){$\underline{\H}/\A$}
\put(0.1,2.1){$\underline{\H}$}
\put(4.1,2.1){$\mod\ \I$}
\put(0.5,2){\vector(1,-1){1.6}}
\put(0.5,2.1){\vector(1,0){3.5}}
\put(4,2.3){\vector(-1,0){3.5}}
\put(2.5,0.4){\vector(1,1){1.6}}
\put(2.1,2.4){$F$}
\put(2.1,1.8){$E$}
\put(0.75,0.95){$\pi_\A$}
\put(3.4,0.95){$E'$}
\end{picture}
\end{center}

We have that $E'\pi_\A F= EF\simeq \id_{\mod\ I}$ by Lemma 6.3. On the other hand, we have that $\pi_\A F E'\pi_\A=\pi_\A F E\simeq\pi_\A$ which implies $\pi_\A F E'\simeq\id_{\underline{\H}/\A}$. Thus we have the main result of this section which determines hearts of any cotorsion pairs in $2-$CY triangulated categories with cluster tilting objects.

\begin{thm}
Let $\C$ be a 2-CY triangulated category and $(\X,\Y)$ be a cotorsion pair in $\C$ with core $\I$. Assume that $\I$ is functorially finite. Then we have an equivalence of additive categories
$$\underline{\H}/\A\simeq\mod\ \I,$$
where $\underline{\H}$ is the heart of $(\X,\Y)$, $\A$ is the heart of $(\X/\I,\Y/\I)$. If $\C$ has cluster tilting objects, then we have an equivalence of abelian categories
$$\underline{\H}\simeq\mod\ \I,$$
and in particular, the hearts of any two cotorsion pairs with the same core are equivalent.
\end{thm}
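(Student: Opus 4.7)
The plan is to exploit the machinery already assembled in this section. For the first equivalence, most of the work is done in the paragraph immediately preceding the statement: the functor $E:\underline{\H}\to\mod\ \I$ vanishes on $\A$ because $\A\subseteq{}^\bot(\I[1])/\I$ and Lemma 6.1 applied to the cotorsion pair $(\I,{}^\bot(\I[1]))$ gives $h_1({}^\bot(\I[1]))=0$; hence $E$ factors through a functor $E':\underline{\H}/\A\to\mod\ \I$. Lemma 6.3 applies (its hypotheses follow from Lemma 6.1 for the same cotorsion pair, together with the $2$-CY identification $(\I[-1])^\bot={}^\bot(\I[1])$), so $E'\pi_\A F=EF\simeq\id_{\mod\ \I}$. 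The symmetric computation $\pi_\A FE\simeq\pi_\A$ then yields $\pi_\A FE'\simeq\id_{\underline{\H}/\A}$, producing the desired additive equivalence.

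For the second statement, assuming $\C$ admits a cluster tilting object, the key point is to show $\A=0$. The subquotient ${}^\bot(\I[1])/\I$ is itself a $2$-CY triangulated category by Theorem 4.7 of \cite{IY08}; moreover, since $\I$ is rigid, it is a direct summand of some cluster tilting object $T$ of $\C$, and Theorem 4.9 of \cite{IY08} (compare Proposition 5.3) produces a cluster tilting object in ${}^\bot(\I[1])/\I$. Corollary 4.3 of this paper then tells us that every t-structure in such a category has trivial heart. Applied to the t-structure $(\X/\I,\Y/\I)$ in ${}^\bot(\I[1])/\I$ (which is indeed a t-structure by Corollary 3.6 of \cite{ZZ2}), we conclude $\A=0$. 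Combining with the first part, we obtain $\underline{\H}=\underline{\H}/\A\simeq\mod\ \I$ as additive categories. Since both sides are abelian (the left by \cite{Na11}, the right by classical module theory), any additive equivalence between them automatically preserves kernels and cokernels and is therefore an equivalence of abelian categories. The final ``in particular'' is immediate, since the same reasoning applies verbatim to any other cotorsion pair with core $\I$, giving two equivalences with $\mod\ \I$ and hence an equivalence between the two hearts.

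The main obstacle is not really technical but organizational: one must verify that the hypotheses of Lemmas 6.1--6.3 and of Corollary 4.3 all apply to the specific cotorsion pairs $(\I,{}^\bot(\I[1]))$ and $(\X/\I,\Y/\I)$ in play, and then package the natural isomorphisms into a genuine equivalence. The most substantive input is the transfer of the ``no non-trivial t-structures'' result (Theorem 4.1) from $\C$ to the subquotient ${}^\bot(\I[1])/\I$, which relies on the fact that cluster tilting objects in $\C$ containing $\I$ as a summand descend to cluster tilting objects in the $2$-CY subquotient. No genuinely new ideas beyond those introduced earlier are required; the theorem is the natural payoff of the preceding development.
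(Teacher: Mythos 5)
Your proposal is correct and follows essentially the same route as the paper: factor $E$ through $\underline{\H}/\A$ using Lemma 6.1, invoke Lemma 6.3 (with the $2$-CY identification $(\I[-1])^\bot={}^\bot(\I[1])$) for the quasi-inverse, and kill $\A$ via Corollary 4.3 applied to the $2$-CY subquotient ${}^\bot(\I[1])/\I$, which inherits a cluster tilting object by Iyama--Yoshino. The only difference is cosmetic: you spell out a couple of details (the Serre-duality identification and why the subquotient has a cluster tilting object) that the paper leaves implicit.
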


\begin{proof} From the above, we have the functor $E':\underline{\H}/\A\rightarrow mod \I$, and the functor $\pi_{\A}F: mod \I\rightarrow \underline{\H}/\A$. Those functors satisfy $\pi_{\A}F E'\cong id_{mod\I}$ and $E'\pi_{\A}F\cong id_{\underline{\H}/\A}.$ Then $$\underline{\H}/\A\simeq \mod\ \I.$$
We prove the second assertion. If $\C$ has cluster tilting object, then the core of every cotorsion pair is functorially finite, since the core contains only finite non-isomorphic indecomposable objects. By Corollary \ref{cor3}, $\A$ is trivial. Then we have the equivalence $\underline{\H}\simeq\mod\ \I$. Note that both categories $\underline{\H}$ and $\mod\ \I$ are abelian and $E, F$ are additive functors. So $\underline{\H}$ and $\mod\ \I$ are isomorphic as abelian categories.
\end{proof}

\begin{exm}
Let $Q: 4\rightarrow 3\rightarrow 2\rightarrow 1$, $\C$ the cluster category of $Q$.
Set $I=\add(P_2[1]\oplus P_3[1])$. Then the subcategory
${}^\bot (I[1])=\add(P_1[1]\oplus P_2[1]\oplus P_3[1]\oplus P_4[1]\oplus I_4\oplus P_1)$.
We mark the indecomposable objects in ${}^\bot (I[1])$ by $\Box$ in the following AR-quiver of $\C$.

\begin{center}
\setlength{\unitlength}{1cm}
\begin{picture}(10,4)

\mul(0,0)(1,1){4}{$\Box$}
\mul(7,3)(1,-1){4}{$\Box$}
\put(2,0){$\Box$} \put(8,0){$\Box$}

\mul(0.15,0.15)(2,0){5}{\vector(1,1){0.9}}
\mul(1.15,1.15)(2,0){4}{\vector(1,1){0.9}}
\mul(1.15,1.05)(2,0){5}{\vector(1,-1){0.9}}
\mul(2.15,2.05)(2,0){4}{\vector(1,-1){0.9}}

\mul(2.15,2.15)(2,0){3}{\vector(1,1){0.9}}
\mul(3.15,3.05)(2,0){3}{\vector(1,-1){0.9}}

\put(-0.9,0){${P_1[1]}$} \put(0.1,1){$P_2[1]$}
\put(1.1,2){$P_3[1]$} \put(2.1,3){$P_4[1]$}

\put(1.5,0){$P_1$} \put(2.5,1){$P_2$}
\put(3.5,2){$P_3$} \put(4.5,3){$P_4$}

\put(3.5,0){$S_2$} \put(5.5,0){$S_3$} \put(4.5,1){$E$}

 \put(5.5,2){$I_2$} \put(6.5,1){$I_3$} \put(7.5,0){$I_4$}
\put(6,3){${P_1[1]}$} \put(7,2){$P_2[1]$}
\put(8,1){$P_3[1]$} \put(9,0){$P_4[1]$}

\put(3,1.2){$\clubsuit$}\put(4,2){$\clubsuit$}\put(6,0.2){$\clubsuit$}
\put(4,0.2){$\diamondsuit$}\put(6,2){$\diamondsuit$}\put(7,1.2){$\diamondsuit$}
\put(3,0.9){$\heartsuit$}\put(5,3){$\heartsuit$}\put(7,0.9){$\heartsuit$}
\put(4,-0.1){$\spadesuit$}\put(5,1){$\spadesuit$}\put(6,-0.1){$\spadesuit$}

\end{picture}
\end{center}
There are four cotorsion pairs with core $I$ in this category, we list them together with
their hearts in the following and mark the indecomposable objects in each heart by
$\clubsuit$, $\diamondsuit$, $\heartsuit$ and $\spadesuit$ respectively in order in the AR-quiver above.

$\begin{array}{cc}
\text{Cotorsion pairs} & \text{Hearts} \\
(I,{}^\bot I[1]) & \add(P_2\oplus P_3\oplus S_3)\\
({}^\bot I[1],I) & \add(S_2\oplus I_2\oplus I_3)\\
(\add(P_2[1]\oplus P_3[1]\oplus P_4[1]\oplus I_4),\add(P_2[1]\oplus P_3[1])\oplus P_1[1]\oplus P[1])) & \add(P_2\oplus P_4\oplus I_3)\\
(\add(P_2[1]\oplus P_3[1])\oplus P_1[1]\oplus P[1]),\add(P_2[1]\oplus P_3[1]\oplus P_4[1]\oplus I_4)) & \add(S_2\oplus E\oplus S_3)
\end{array}$

\end{exm}

\begin{center}
\textbf {ACKNOWLEDGMENTS.}\end{center}

This work was completed when the second author was visiting at Universit\"at Bielefeld supported by DAAD.
He would like to thank Claus Michael Ringel and Henning Krause for suggestions and for hospitality, and to thank DAAD for financial support. Both authors would like to
thank Idun Reiten and Bernhard Keller for discussion.

\end{document}